\numberwithin{equation}{section}
\DeclareMathAlphabet{\mathcal}{OMS}{cmsy}{m}{n}
\setlist[enumerate]{align=left}
\newtheoremstyle{boldtitle}
{}
{}
{}
{}
{\bfseries}
{.}
{.5em}
{{\thmname{#1 }}{\thmnumber{#2}}{\thmnote{ (#3)}}}
\theoremstyle{boldtitle}
\newtheorem{theorem}{Theorem}[section]
\newtheorem{lemma}[theorem]{Lemma}
\newtheorem{definition}[theorem]{Definition}
\newtheorem{proposition}[theorem]{Proposition}
\newtheorem{corollary}[theorem]{Corollary}
\newtheorem{remark}[theorem]{Remark}
\newtheorem{assumption}[theorem]{Assumption}
\DeclareMathOperator{\ES}{ES}
\begin{document}
\setlength{\abovedisplayskip}{8pt}
\setlength{\belowdisplayskip}{8pt}

\begin{center}
\vspace*{2em}

{\LARGE A Joint Quantile and Expected Shortfall Regression Framework}

\vspace*{1.5em}
\renewcommand*{\thefootnote}{\fnsymbol{footnote}}
{\large Timo Dimitriadis%
   	\footnote{Corresponding author\\\normalfont Email addresses: \href{mailto:timo.dimitriadis@uni-konstanz.de}{timo.dimitriadis@uni-konstanz.de}, \href{mailto:sebastian.bayer@uni-konstanz.de}{sebastian.bayer@uni-konstanz.de}} %
   	and Sebastian Bayer}
\renewcommand*{\thefootnote}{\arabic{footnote}}
\setcounter{footnote}{0}

\vspace*{1em}

\textit{University of Konstanz, Department of Economics, 78457 Konstanz, Germany}

\vspace*{1em}
\textit{This Version: 07.08.2017}
\end{center}

\noindent\rule{\linewidth}{.4pt}
\textbf{Abstract}
\\[.5\baselineskip]
\noindent
We introduce a novel regression framework which simultaneously models the quantile and the Expected Shortfall (ES) of a response variable given a set of covariates.
This regression is based on a strictly consistent loss function for the pair quantile and ES, which allows for M- and Z-estimation of the joint regression parameters.
We show consistency and asymptotic normality for both estimators under weak regularity conditions.
The underlying loss function depends on two specification functions, whose choice affects the properties of the resulting estimators.
We find that the Z-estimator is numerically unstable and thus, we rely on M-estimation of the model parameters.
Extensive simulations verify the asymptotic properties and analyze the small sample behavior of the M-estimator for different specification functions.
This joint regression framework allows for various applications including estimating, forecasting, and backtesting ES, which is particularly relevant in light of the recent introduction of ES into the Basel Accords.
\\[.5\baselineskip]
\textit{Keywords:} Expected Shortfall, Joint Elicitability, Joint Regression, M-estimation, Quantile Regression
\\[-.5\baselineskip]
\noindent\rule{\linewidth}{.4pt}

\section{Introduction}

Measuring and forecasting risks is essential for a variety of academic disciplines.
For this purpose, risk measures which are formally defined as a map (with certain properties) from a space of random variables to a real number, are applied to condense the complex nature of the involved risks to a single number \citep{Artzner1999}.
In the context of financial risk measurement, to date the most commonly used risk measure is the Value-at-Risk (VaR), which is the $\alpha$-quantile of the return distribution. 
Its popularity is mainly due to its simple nature and the fact that up to now, the Basel Accords stipulate its use for the calculation of capital requirements for banks.
Besides being not coherent \citep{Artzner1999}, the main drawback of the VaR is its inability to capture tail risks beyond itself.
This deficiency is overcome by the risk measure Expected Shortfall (ES) at level $\alpha$, which is defined as the mean of the returns which are smaller than the $\alpha$-quantile of the return distribution.
The ES has the desired ability to capture information from the whole left tail of the return distribution, which is particularly important for measuring extreme financial risks.
Over the past few years, ES has increasingly become the object of interest for practitioners, academics, and regulators, especially since its recent introduction into the Basel Accords \citep{Basel2016}.

A major drawback of the ES (regarded as a statistical functional) is that it is not elicitable, which means that there exists no loss function (scoring function, scoring rule) which the ES uniquely minimizes in expectation \citep{Gneiting2011b, Weber2006}.
This result has two main consequences. 
First, consistent ranking of competing forecasts for the ES based on such a loss function is infeasible.
Second, and more substantial for this paper, modeling the conditional ES given a set of covariates through a regression model without specifying the full conditional distribution is infeasible since estimation of the regression parameters through M-estimation requires such a loss function.
Consequently, and in contrast to quantile regression (which can be used to model the VaR), to date, there exists no such regression framework which models the ES based on a set of covariates.

\cite{Nadarajah2014} provide an overview of estimation methods for the ES.
However, the reviewed approaches are only applicable for univariate data and not suitable for estimating the conditional ES based on covariates such as in mean and quantile regression.
Nevertheless, there are some approaches for the ES which incorporate explanatory variables through indirect estimation procedures.
\citet{Taylor2008b} proposes an implicit approach for forecasting ES using exponentially weighted quantile regression and \citet{Taylor2008} introduces a procedure based on expectile regression and a relationship between the ES and expectiles.
\citet{Taylor2017} suggests a joint modeling technique for the quantile and the ES based on maximum likelihood estimation of the asymmetric Laplace distribution.
\cite{Barendse2017} proposes generalized method of moments (GMM) estimation for a regression framework for the interquantile expectation.

Even though the ES is not elicitable stand-alone, \citet{Fissler2016} show in their seminal paper that the quantile (the VaR) and the ES are jointly elicitable by introducing a class of joint loss functions, whose expectation is minimized by these two functionals.
This joint elicitability result and the associated class of loss functions gives rise to a growing literature in both, joint estimation \citep{Zwingmann2016} and in joint forecast evaluation \citep{Acerbi2014,Fissler2016b,Nolde2017,Ziegel2017} for the risk measures VaR and ES.

In this paper, we utilize the class of loss functions of \citet{Fissler2016} for the introduction of a novel simultaneous regression framework for the quantile and the ES and propose both, an M- and a Z-estimator for the joint regression parameters.
These strictly consistent loss functions facilitate the opportunity to introduce M- and Z-estimation of the regression parameters without specifying the full conditional distribution of the model, as opposed to maximum likelihood estimation.
We show consistency and asymptotic normality for both estimators under weak regularity conditions which are typical for such a regression framework.
To the best of our knowledge, we are the first to propose such a joint regression framework for the quantile and the ES together with the joint M- and Z-estimation and the associated results of consistency and asymptotic normality.
Furthermore, we are the first to propose a joint semiparametric regression framework for two different functionals based on joint M-estimation without specifying the full conditional distribution.

The employed joint loss function, the estimating equations (for the Z-estimator) and the resulting parameter estimates depend on two specification functions, which can be chosen from some class of functions.
Even though consistency and asymptotic normality hold for all applicable choices of these specification functions, they affect the necessary moment conditions, the resulting asymptotic covariance matrices of the estimators, the numerical stability of the optimization algorithm, and the computation times.
We discuss the choice of these functions in a theoretical context with respect to asymptotic efficiency and necessary regularity conditions, and with respect to the numerical properties of the optimization algorithm. 

The estimation of the asymptotic covariance matrix imposes some difficulties.
The first occurs in the estimation of the density quantile function, analogous to quantile regression \citep[cf.][]{Koenker2005book} and thus, we utilize estimation procedures stemming from this literature.
The second issue is the estimation of the variance of the negative quantile residuals conditional on the covariates, a nuisance quantity which is new to the literature.
We introduce several estimators for this quantity which are able to cope with limited sample sizes and which can model the dependency of the negative quantile residuals on the covariates.
Furthermore, we estimate the covariance matrix using the bootstrap.
For ease of application, we provide an R package \citep{ESRegPackge} which contains the implementation of the M- and Z-estimator. 
The user can choose the specification functions, the numerical optimization procedure and the estimation method for the covariance matrix of the parameter estimates.

We conduct a Monte-Carlo simulation study where we consider three data generating processes with different properties.
We numerically verify consistency and asymptotic normality of the M-estimator  for a range of different choices of the specification functions.
Furthermore, we find that the Z-estimator is numerically unstable due to the redescending nature of the utilized estimating equations and consequently, we rely on M-estimation of the regression parameters. 
Moreover, we find that the performance of the M-estimator strongly depends on the specification functions, where choices resulting in positively homogeneous loss functions \citep{Nolde2017, Efron1991} lead to a superior performance in terms of asymptotic efficiency, computation times, and mean squared error of the estimator.

This joint regression technique for the quantile and ES has a wide range of potential applications as it generalizes quantile regression to the pair consisting of the quantile and the ES.
In the context of financial risk management, it opens up the possibility to extend the existing applications of quantile regression on VaR in the financial literature to ES, such as e.g. in \cite{Chernozhukov2001}, \cite{Engle2004}, \cite{KoenkerXiao2006}, \cite{Gaglianone2011}, \cite{Halbleib2012}, \cite{Komunjer2013}, \cite{Xiao2015} and \cite{Zikes2016}.
Such estimation, forecasting, and backtesting methods for the ES are particularly sought-after in light of the recent shift from VaR to ES in the Basel Accords.
As an illustration, we present an empirical application where we use our regression framework to jointly forecast VaR and ES based on the realized volatility.

The rest of the paper is organized as follows. 
In \Cref{sec::Methodology}, we introduce the joint regression framework, the underlying regularity conditions together with the asymptotic properties of our estimators and discuss the choice of the specification functions.
\Cref{sec::ModelEstimation} provides details on the numerical implementation of the estimators and on the estimation of the asymptotic covariance matrix.
\Cref{sec::Simulations} presents an extensive simulation study and Section \ref{sec::EmpiricalApplication} contains an empirical application.
\Cref{sec::Conclusion} provides concluding remarks.
The proofs are deferred to Appendices \ref{sec::Proofs} and \ref{sec::TechnicalResults}.

\section{Methodology}
\label{sec::Methodology}

\subsection{The Joint Regression Framework}
\label{sec::JointRegressionFramework}

Following \citet{Lambert2008}, \cite{Gneiting2011b} and \citet{Fissler2016}, we introduce the concept of (multivariate) $p$-elicitability.
We consider a random variable $Z: \Omega \to \mathbb{R}^d$, defined on some complete probability space $\big( \Omega, \mathcal{F}, P \big)$, a class of distributions $\mathcal{P}$ on $\mathbb{R}^d$, equipped with the Borel $\sigma$-field and a functional $T: \mathcal{P} \to D$ with its domain of action $D \subseteq \mathbb{R}^p, p \in \mathbb{N}$.
We call an integrable loss function $\rho: \mathbb{R}^d \times D \to \mathbb{R}$ \textit{strictly consistent} for the functional $T$ relative to the class of distributions $\mathcal{P}$, if $T$ is the unique minimizer of $\mathbb{E} \big[ \rho(Z,\cdot) \big]$ for all distributions $F \in \mathcal{P}$, where $F$ is the distribution of $Z$.
Furthermore, we call a $p$-dimensional functional $T$ $p$-\textit{elicitable} relative to the class $\mathcal{P}$, if there exists a loss function $\rho$ which is strictly consistent for $T$ relative to $\mathcal{P}$.
If the dimension $p$ is clear from the context, we simply call the functional elicitable instead of $p$-elicitable.

Given the generalized $\alpha$-quantile $Q_\alpha(Z) = F^{-1}(\alpha) = \operatorname{inf} \big\{ z \in \mathbb{R}: F(z) \ge \alpha \big\}$ for some $\alpha \in (0,1)$, the ES of the random variable $Z$ at level $\alpha$ is defined as $\ES_\alpha (Z) = \frac{1}{\alpha} \int_0^\alpha Q_u(Z) \, \mathrm{d}u$.
If the distribution function of $Z$ is continuous at its $\alpha$-quantile, this definition can be simplified to the conditional tail expectation $\ES_\alpha (Z) = \mathbb{E} \big[ Z \, \big| \, Z \le Q_\alpha(Z) \big]$.
\citet{Gneiting2011b} shows that the ES is not  $1$-elicitable with respect to any class $\mathcal{P}$ of probability distributions on intervals $I \subseteq \mathbb{R}$, which contain measures with finite support or finite mixtures of absolutely continuous distributions with compact support (see also \citealp{Weber2006}).
This result has several consequences for the risk measure ES. First, consistent and meaningful ranking of competing forecasts for the functional ES is infeasible.
Second, and more consequential for this work, estimating the parameters of a stand-alone regression model for the functional ES in the sense that $\ES_\alpha(Y|X) = X' \theta^e$ by means of M-estimation, i.e. by minimizing some strictly consistent loss function, is infeasible.
Even though the ES is not $1$-elicitable, \citet{Fissler2016} show that the pair consisting of the ES and the quantile at common probability level $\alpha$ is $2$-elicitable relative to the class of distributions with finite first moments and unique $\alpha$-quantiles and they characterize the full class of strictly consistent loss functions for this pair subject to some regularity conditions.
Since the definition of the ES already depends on the respective quantile, the fact that the ES is only elicitable jointly with the quantile is not surprising.

We utilize this joint elicitability result for the introduction of a new joint regression framework for the quantile and the ES where the aforementioned class of strictly consistent loss functions serves as the basis for the M-estimation of the joint regression parameters.
For this, let $Y: \Omega \to \mathbb{R}$ and $X: \Omega \to \mathbb{R}^k$ be random variables defined on the same probability space $\big( \Omega, \mathcal{F}, P \big)$ as above. Henceforth, the transpose of $X$ will be denoted by $X'$, the cumulative distribution function of $Y$ given $X$ by $F_{Y|X}$ and the conditional density function by $f_{Y|X}$.
For a $k$-times differentiable real-valued function $G: \mathbb{R} \to \mathbb{R}$, we denote the $k$-th derivative by $G^{(k)}(\cdot)$.

\begin{assumption}[The joint regression model] 
	\label{ass::Model}
	The regression framework which jointly models the conditional quantile and ES of $Y$ given $X$ for some fixed level $\alpha \in (0,1)$ is given by
	\begin{align}\label{eqn::JointRegressionEquation}
		Y = X' \theta^q_0 + u^q \qquad \text{and} \qquad Y = X' \theta^e_0 + u^e,
	\end{align}
	where $Q_{\alpha}(u^q|X)  = 0$ and $\ES_{\alpha}(u^e|X) = 0$. The model is parametrized by $\theta_0 = (\theta_0^{q\prime},\theta_0^{e\prime})' \in \Theta \subset \mathbb{R}^{2k}$, where the parameter space $\Theta$ is compact with nonempty interior, $\operatorname{int}(\Theta) \not= \emptyset$.
\end{assumption}

We propose both, an M-estimation and a Z-estimation procedure for the compound regression parameter vector $\theta_0$.
For the M-estimation, we adapt the class of strictly consistent joint loss functions\footnote{One can interpret the structure of this loss function as follows \citep{Fissler2016b}: The first summand in (\ref{eqn::regressionrhofunction}) is a strictly consistent loss function for the quantile \citep{Gneiting2011b} and hence only depends on the quantile, whereas the second summand cannot be split into a part depending only on the quantile  and one depending only on the ES. This illustrates the fact that the ES itself is not 1-elicitable, but 2-elicitable together with the respective quantile.} for the quantile and ES as given in \cite{Fissler2016} such that it can be used in a regression framework,	
\begin{align} 
	\begin{aligned}
	\label{eqn::regressionrhofunction}          
	\rho(Y,X,\theta) = \;   &\big(\mathds{1}_{\{Y \le X' \theta^q\}} - \alpha \big) G_1(X' \theta^q) - \mathds{1}_{\{Y \le X' \theta^q\}} G_1(Y) \\
	& \; + G_2(X' \theta^e) \left( X' \theta^e - X' \theta^q + \frac{(X' \theta^q - Y) \mathds{1}_{\{Y \le X' \theta^q\}}}{\alpha}  \right) - \mathcal{G}_2(X' \theta^e) + a(Y),
	\end{aligned}	
\end{align}	
where the function $G_1$ is twice continuously differentiable, $\mathcal{G}_2$ is three times continuously differentiable, $\mathcal{G}_2^{(1)} = G_2$, $G_2$ and $G_2^{(1)}$ are strictly positive, $G_1$ is increasing and $a$ and $G_1$ are integrable.
We discuss the choice of the \textit{specification functions} $G_1$ and $\mathcal{G}_2$ in a theoretical context in Section \ref{sec::ChoiceOfTheGfunctions} and by their numerical performance in Section \ref{sec::MCStudyChoiceOfTheGfunctions}.
The corresponding ($\rho$-type) M-estimator is defined by a sequence $\hat \theta_{\rho,n}$, such that $\hat \theta_{\rho,n} = \operatorname{argmin}_{\theta \in \Theta} \frac{1}{n} \sum_{i=1}^{n} \rho(Y_i,X_i,\theta)$.

Instead of minimizing some objective function $\rho(Y,X,\theta)$ such as in (\ref{eqn::regressionrhofunction}), we can also define the corresponding Z-estimator (or $\psi$-type M-estimator), which sets a vector of estimating equations (moment conditions), denoted by $\psi(Y,X,\theta)$, to zero.
More generally, it suffices that these estimating equations converge to zero almost surely.
Formally, the Z-estimator is a sequence $\hat \theta_{\psi,n}$, such that $\frac{1}{n} \sum_{i=1}^{n} \psi(Y_i,X_i,\hat \theta_{\psi,n}) \to 0$ almost surely, where
\begin{align} 
	\label{eqn::regressionpsifunction}
	\psi(Y,X,\theta)
	= \begin{pmatrix}
	\psi_1(Y,X,\theta) \\
	\psi_2(Y,X,\theta) 
	\end{pmatrix}
	= \begin{pmatrix}
	\frac{1}{\alpha}(\mathds{1}_{\{Y \le X' \theta^q\}} -\alpha) \big( \alpha X G_1^{(1)}(X'\theta^q) + X G_2(X' \theta^e) \big) \vspace{0.1cm} \\
	X G_2^{(1)} ( X'\theta^e) \left( X' \theta^e - X' \theta^q + \frac{1}{\alpha} (X' \theta^q - Y) \mathds{1}_{\{Y \le X' \theta^q\}}  \right)
	\end{pmatrix},
\end{align}	
which is obtained by differentiating\footnote{
	Note that the function $\rho(Y,X,\theta)$, given in (\ref{eqn::regressionrhofunction}) is only differentiable for $Y \not= X'\theta^q$. However, the points of non-differentiability, $Y = X'\theta^q$ form a nullset with respect to the absolutely continuous distribution of $Y$ given $X$.
}
(\ref{eqn::regressionrhofunction}) and where the functions $G_1$ and $G_2$ are given as above.
When the loss function $\rho(Y,X,\theta)$ is continuously differentiable in $\theta$, it is obvious that the M- and Z-estimation approaches are equivalent.
However, in this case the loss function $\rho(Y,X,\theta)$ is not differentiable and $\psi(Y,X,\theta)$ is discontinuous at the points where $Y = X' \theta^q$.
Thus, we treat these two estimation approaches as different estimators and show their asymptotic behavior separately.

\subsection{Asymptotic Properties}
\label{sec::AsymptoticProperties}
In this section, we present the asymptotic properties of the M- and Z-estimator of the regression parameters. Consistency and asymptotic normality hold under the following set of weak regularity conditions, which are natural for this regression framework.

\setlist[enumerate]{
	labelsep=8pt,
	labelindent=0.5\parindent,
	itemindent=0pt,
	leftmargin=50pt,
	before=\setlength{\listparindent}{-\leftmargin},
}

\begin{assumption}[Regularity Conditions] \label{ass::GeneralAssumptionsConsistency}
	$ $
	\begin{enumerate}[label= ($\mathcal{A}$-\arabic*)]
		\item 
		\label{RegCond::ConsistencyConditionalDistribution}
		The data $(Y_i,X_i)$ for $i = 1, \dots ,n$ is an iid series of random variables, distributed such as $(Y,X)$ given above.
		Furthermore, the conditional distribution $F_{Y|X}$ has finite second moments and is absolutely continuous with probability density function $f_{Y|X}$, which is strictly positive, continuous and bounded in a neighbourhood of the true conditional quantile, $X'\theta^q_0$.

		\item 
		\label{RegCond::ConsistencyFullRankCondition}
		The matrix $\mathbb{E} \big[ X X' \big]$ is positive definite.

		\item 
		\label{RegCond::GFunctions}
		The functions $\rho(Y,X,\theta)$ and $\psi(Y,X,\theta)$ are given as in (\ref{eqn::regressionrhofunction}) and (\ref{eqn::regressionpsifunction}), where the function $G_1$ is twice continuously differentiable, $\mathcal{G}_2$ is three times continuously differentiable, $\mathcal{G}_2^{(1)} = G_2$, $G_2$ and $G_2^{(1)}$ are strictly positive, $G_1$ is increasing and $a$ and $G_1$ are integrable.

	\end{enumerate}
\end{assumption}

\begin{remark}[Finite Moment Conditions]
	We further have to assume that certain moments of $X$ are finite. For the sake of space, we specify the Finite Moment Conditions \ref{MomCond::ConsistencyPsi} - \ref{MomCond::AsymptoticNormalityRho} in  Appendix \ref{sec::GeneralMomentConditions}.
	Note that these general moment conditions simplify substantially for sensible choices of the specification functions $G_1$ and $\mathcal{G}_2$ as further outlined in Section \ref{sec::ChoiceOfTheGfunctions}. 
\end{remark}

Assumption \ref{RegCond::ConsistencyConditionalDistribution} is a combination of typical regularity conditions of mean and quantile regression.
Absolute continuity of $F_{Y|X}$ with a strictly positive, bounded and continuous density function in a neighborhood of the true conditional quantile is also imposed for the asymptotic theory of quantile regression.
Existence of the conditional moments of $Y$ given $X$ is subject to the conditions of mean regression and is included in our regularity conditions since ES is a truncated mean.
The positive definiteness (full rank condition) in \ref{RegCond::ConsistencyFullRankCondition} is common for any regression design with stochastic regressors in order to exclude perfect multicollinearity of the regressors.
The conditions for the specification functions $G_1$ and $\mathcal{G}_2$ in \ref{RegCond::GFunctions} mainly originate from the conditions for the joint elicitability of the quantile and ES in \citet{Fissler2016}. 
Differentiability of these functions is required in this setup for obtaining the estimating equations and for the differentiations in the computation of the asymptotic covariance in Theorem \ref{thm::AsymptoticNormalityPsi} and Theorem \ref{thm::AsymptoticNormalityRho}.
The existence of certain moments of the explanatory variables as in conditions \ref{MomCond::ConsistencyPsi} - \ref{MomCond::AsymptoticNormalityRho} in Appendix \ref{sec::GeneralMomentConditions} is also standard in any regression design relying on stochastic regressors.
Even though compactness of the parameter space $\Theta$ in Assumption \ref{ass::Model} generally simplifies the proofs, in this setup it is crucial for consistency of the Z-estimator as the estimating equations $\psi_2$ are redescending to zero for many reasonable choices of the $G_2$ function such as e.g. the choices resulting in positively homogeneous loss functions. For details on this, we refer to Section \ref{sec::Optimization}.

\begin{theorem} \label{thm::ConsistencyPsi}
	Assume that Assumption \ref{ass::Model}, Assumption \ref{ass::GeneralAssumptionsConsistency} and the Moment Conditions \ref{MomCond::ConsistencyPsi} in Appendix \ref{sec::GeneralMomentConditions} hold true. Then, for every sequence $\hat \theta_{\psi,n} \in \Theta$ satisfying $\frac{1}{n} \sum_{i=1}^{n} \psi(Y_i,X_i,\hat \theta_{\psi,n}) \stackrel{a.s.}{\longrightarrow} 0$, it holds that $\hat \theta_{\psi,n} \stackrel{a.s.}{\longrightarrow} \theta_0$.
\end{theorem}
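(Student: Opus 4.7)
The approach is to apply a standard consistency theorem for Z-estimators (in the spirit of Theorem 5.9 of van der Vaart, 1998), for which the plan is to verify two ingredients: (i) a uniform strong law $\sup_{\theta \in \Theta} \|\Psi_n(\theta) - \Psi(\theta)\| \to 0$ almost surely, where $\Psi_n(\theta) := \frac{1}{n}\sum_{i=1}^n \psi(Y_i,X_i,\theta)$ and $\Psi(\theta) := \mathbb{E}[\psi(Y,X,\theta)]$; and (ii) that $\theta_0$ is a well-separated zero of $\Psi$, i.e.\ $\Psi(\theta_0)=0$ together with $\inf_{\|\theta-\theta_0\|\ge\varepsilon}\|\Psi(\theta)\|>0$ for every $\varepsilon>0$. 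Granted these, the hypothesis $\Psi_n(\hat\theta_{\psi,n}) \to 0$ a.s.\ combined with (i) forces $\Psi(\hat\theta_{\psi,n}) \to 0$ a.s., and (ii) then delivers $\hat\theta_{\psi,n} \to \theta_0$ a.s.

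First I would check $\Psi(\theta_0)=0$ by conditioning on $X$. For $\psi_1$, the factor $\mathbb{E}\bigl[\mathds{1}_{\{Y\le X'\theta_0^q\}}-\alpha \mid X\bigr] = F_{Y|X}(X'\theta_0^q)-\alpha$ vanishes by absolute continuity of $F_{Y|X}$ (Assumption \ref{RegCond::ConsistencyConditionalDistribution}). For $\psi_2$, a direct computation using the tower property and the definition of ES gives $\mathbb{E}\bigl[(X'\theta_0^q-Y)\mathds{1}_{\{Y\le X'\theta_0^q\}}\mid X\bigr] = \alpha(X'\theta_0^q - X'\theta_0^e)$, so the bracket inside $\psi_2$ collapses to zero at $\theta_0$.

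The main structural obstacle is unique identification of the zero. Suppose $\Psi(\theta^\ast)=0$. Taking the inner product of $\Psi_1(\theta^\ast)=0$ with $\theta^{q,\ast}-\theta_0^q$ and conditioning on $X$ yields
\[
0 = \mathbb{E}\Bigl[(X'\theta^{q,\ast}-X'\theta_0^q)\,\tfrac{1}{\alpha}\bigl(F_{Y|X}(X'\theta^{q,\ast})-\alpha\bigr)\bigl(\alpha G_1^{(1)}(X'\theta^{q,\ast}) + G_2(X'\theta^{e,\ast})\bigr)\Bigr].
\]
Monotonicity of $F_{Y|X}$ together with strict positivity of the density around $X'\theta_0^q$ (Assumption \ref{RegCond::ConsistencyConditionalDistribution}) and strict positivity of $\alpha G_1^{(1)} + G_2$ (Assumption \ref{RegCond::GFunctions}) render the integrand nonnegative, so the equation forces it to vanish almost surely and hence $X'\theta^{q,\ast}=X'\theta_0^q$ a.s.; the full-rank condition \ref{RegCond::ConsistencyFullRankCondition} then gives $\theta^{q,\ast}=\theta_0^q$. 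Substituting back, $\Psi_2(\theta_0^q,\theta^{e,\ast})$ simplifies via the same ES identity to $\mathbb{E}\bigl[XX' G_2^{(1)}(X'\theta^{e,\ast})\bigr](\theta^{e,\ast}-\theta_0^e)$; strict positivity of $G_2^{(1)}$ combined with \ref{RegCond::ConsistencyFullRankCondition} makes this matrix positive definite, forcing $\theta^{e,\ast}=\theta_0^e$. Continuity of $\Psi$ (by dominated convergence, using the continuous conditional density and the moment conditions in Appendix \ref{sec::GeneralMomentConditions}) together with compactness of $\Theta$ then upgrade uniqueness to well-separation.

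The chief technical obstacle is the uniform strong law in (i), because $\psi$ is discontinuous in $\theta^q$ through the indicator $\mathds{1}_{\{Y\le X'\theta^q\}}$, precluding a direct continuity-plus-SLLN argument. The halfspace indicators $\{\mathds{1}_{\{y\le x'\theta^q\}} : \theta^q\in\mathbb{R}^k\}$ form a VC-subgraph class, and combining this with the continuity of $G_1^{(1)},G_2,G_2^{(1)}$ in $\theta^e$ from Assumption \ref{RegCond::GFunctions} and the finite moment conditions \ref{MomCond::ConsistencyPsi} to construct an integrable envelope shows $\{\psi(\cdot,\cdot,\theta):\theta\in\Theta\}$ is Glivenko-Cantelli, which delivers the required uniform SLLN and completes the argument.
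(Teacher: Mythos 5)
Your identification of the root is exactly the paper's argument: you show $\mathbb{E}[\psi_1(Y,X,\theta)]'(\theta^q-\theta_0^q)>0$ whenever $\theta^q\neq\theta_0^q$ using monotonicity of $F_{Y|X}$, strict positivity of the conditional density near $X'\theta_0^q$, strict positivity of $\alpha G_1^{(1)}+G_2$, and the full-rank condition \ref{RegCond::ConsistencyFullRankCondition}; and you then reduce $\mathbb{E}[\psi_2]$ at $\theta^q=\theta_0^q$ to $\mathbb{E}\bigl[XX'G_2^{(1)}(X'\theta^e)\bigr](\theta^e-\theta_0^e)$ with a positive definite matrix. That part is correct and coincides with the paper. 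The surrounding machinery is different, however: the paper verifies the hypotheses of Theorem 2 of Huber (1967) --- measurability and Doob separability of $\psi$, uniqueness of the root of $\lambda(\theta)=\mathbb{E}[\psi(Y,X,\theta)]$, and the \emph{local} oscillation bound $\mathbb{E}\bigl[\sup_{\tau\in\bar U_d(\theta)}\|\psi(Y,X,\tau)-\psi(Y,X,\theta)\|\bigr]\le b\,d$ for $\|\theta-\theta_0\|+d\le d_0$ established in Lemma \ref{lemma::normalityN3iicondition} --- whereas you go through a global uniform strong law plus well-separation of the zero.

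This is where your plan has a genuine gap relative to the stated hypotheses. A Glivenko--Cantelli argument for $\{\psi(\cdot,\cdot,\theta):\theta\in\Theta\}$ requires an integrable envelope over \emph{all} of $\Theta$, i.e.\ $\mathbb{E}\bigl[\sup_{\theta\in\Theta}\|\psi(Y,X,\theta)\|\bigr]<\infty$, and your continuity-plus-compactness upgrade from uniqueness to well-separation likewise needs $\mathbb{E}[\psi(Y,X,\theta)]$ to be defined and continuous on all of $\Theta$, hence dominating functions around every point of $\Theta$. The Moment Conditions \ref{MomCond::ConsistencyPsi} that the theorem actually assumes only bound expectations of suprema taken over a neighborhood $U_{d_0}(\theta_0)$ of the true parameter. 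For an admissible choice such as $G_2(z)=\exp(z)$, nothing in \ref{MomCond::ConsistencyPsi} controls $\mathbb{E}\bigl[\|X\|^{3}\sup_{\theta\in\Theta}G_2^{(1)}(X'\theta^e)\bigr]$, so the envelope you invoke is a strictly stronger assumption than what is available; this is precisely why the paper routes the proof through Huber's theorem, whose stochastic-equicontinuity condition is local at $\theta_0$. To make your version work you would either have to strengthen \ref{MomCond::ConsistencyPsi} by replacing $U_{d_0}(\theta_0)$ with $\Theta$ (after which the VC-class/envelope argument for the indicator part is fine), or fall back on an argument that, like Huber's, combines global identification with purely local uniform control. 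A smaller omission: measurability of the suprema entering your envelope and GC bounds needs justification, which the paper supplies via separability of the almost surely continuous process $\psi$.
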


\begin{theorem}
	\label{thm::ConsistencyRho}
	Assume that Assumption \ref{ass::Model}, Assumption \ref{ass::GeneralAssumptionsConsistency} and the Moment Conditions \ref{MomCond::ConsistencyRho} in Appendix \ref{sec::GeneralMomentConditions} hold true. Then, for every sequence $\hat \theta_{\rho,n} \in \Theta$ such that
	$\frac{1}{n} \sum_{i=1}^{n} \rho(Y_i,X_i,\hat \theta_{\rho,n}) \le \frac{1}{n} \sum_{i=1}^{n} \rho(Y_i,X_i, \theta_0) + o_{P}(1)$,
	it holds that $\hat \theta_{\rho,n} \stackrel{\mathbb{P}}{\longrightarrow} \theta_0$.
\end{theorem}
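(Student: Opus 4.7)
The plan is to follow the classical M-estimator consistency argument, e.g.\ along the lines of Newey and McFadden (1994, Thm.\ 2.1) or van der Vaart (1998, Thm.\ 5.7). Write $M(\theta) := \mathbb{E}[\rho(Y,X,\theta)]$ and $M_n(\theta) := \frac{1}{n}\sum_{i=1}^n \rho(Y_i,X_i,\theta)$. Four ingredients need to be assembled: compactness of $\Theta$ (given in Assumption \ref{ass::Model}), continuity of $M$ on $\Theta$, unique identification of $\theta_0$ as the minimizer of $M$, and a uniform law of large numbers $\sup_{\theta \in \Theta}|M_n(\theta) - M(\theta)| \stackrel{\mathbb{P}}{\to} 0$. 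A standard argmin argument combined with the near-minimizer hypothesis of the theorem then delivers $\hat\theta_{\rho,n} \stackrel{\mathbb{P}}{\to} \theta_0$.

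For identification I would condition on $X$ and write $M(\theta) = \mathbb{E}\big[\mathbb{E}[\rho(Y,X,\theta)\,|\,X]\big]$. The strict consistency result of \citet{Fissler2016} guarantees that, for almost every $X$, the inner conditional expectation, viewed as a function of the pair $(X'\theta^q, X'\theta^e) \in \mathbb{R}^2$, attains its unique minimum at $\big(Q_\alpha(Y|X), \ES_\alpha(Y|X)\big)$; their hypotheses are supplied by \ref{RegCond::ConsistencyConditionalDistribution} (absolute continuity and strict positivity of $f_{Y|X}$ in a neighborhood of the true quantile, finite first moment) and by the sign, monotonicity and smoothness properties of $G_1, G_2, \mathcal{G}_2$ in \ref{RegCond::GFunctions}. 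Assumption \ref{ass::Model} identifies these two quantities with $X'\theta_0^q$ and $X'\theta_0^e$ a.s., so $M(\theta) \ge M(\theta_0)$ with equality only if $X'(\theta^q - \theta_0^q) = 0$ and $X'(\theta^e - \theta_0^e) = 0$ almost surely, and the full-rank condition \ref{RegCond::ConsistencyFullRankCondition} then forces $\theta = \theta_0$.

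For the continuity of $M$ and the uniform LLN I would run a Jennrich-type argument. The map $\theta \mapsto \rho(Y,X,\theta)$ is continuous on the compact $\Theta$ for almost every $(Y,X)$, because the only discontinuity of the integrand sits at $Y = X'\theta^q$, which is a $P$-null set by the absolute continuity assumed in \ref{RegCond::ConsistencyConditionalDistribution}. Combined with compactness of $\Theta$ and an integrable envelope $|\rho(Y,X,\theta)| \le H(Y,X)$ with $\mathbb{E}[H] < \infty$, both continuity of $M$ (via dominated convergence) and uniform almost sure convergence $M_n \to M$ over $\Theta$ follow from the standard uniform strong law for continuous parametric families. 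The envelope is built term by term from the summands of (\ref{eqn::regressionrhofunction}): one bounds $G_1(X'\theta^q)$, $G_2(X'\theta^e)$ and $\mathcal{G}_2(X'\theta^e)$ uniformly over $\theta \in \Theta$ by continuous functions of $\|X\|$ using the smoothness in \ref{RegCond::GFunctions}, and controls the cross product $\frac{1}{\alpha}G_2(X'\theta^e)(X'\theta^q - Y)\mathds{1}_{\{Y \le X'\theta^q\}}$ by a product-type function of $\|X\|$ and $|Y|$; integrability is then precisely what the Moment Conditions \ref{MomCond::ConsistencyRho} in Appendix \ref{sec::GeneralMomentConditions} are designed to deliver.

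The main obstacle, I expect, is this construction of an integrable envelope. Unlike plain quantile regression, the ES part of the loss introduces products such as $G_2(X'\theta^e)\,(X'\theta^q - Y)$, so the dominating function must simultaneously accommodate the growth of the specification function $G_2$ on the compact linear image $\{X'\theta^e : \theta \in \Theta\}$, the linear term $X'\theta^q$, and the response $Y$. Verifying that \ref{MomCond::ConsistencyRho} supplies an $L^1$ dominating function uniformly on $\Theta$ (and checking it in the two leading cases of homogeneous and of exponential-type $G_2$ used in the paper) is the only real technical burden; the identification and uniform-convergence steps above are then packaged into the standard consistency conclusion.
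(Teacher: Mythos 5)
Your proposal is correct and follows essentially the same route as the paper: the authors also invoke the van der Vaart (1998, Thm.\ 5.7) consistency scheme, establish the uniform law via Lemma 2.4 of Newey--McFadden with exactly the term-by-term dominating function you describe (whose integrability is what the Moment Conditions \ref{MomCond::ConsistencyRho} are tailored to), and prove identification by conditioning on $X$, applying the strict consistency (Corollary 5.5) of \citet{Fissler2016} on the event $\{X'\theta^q \neq X'\theta_0^q \text{ or } X'\theta^e \neq X'\theta_0^e\}$, and using positive definiteness of $\mathbb{E}[XX']$ to show that event has positive probability whenever $\theta \neq \theta_0$. No substantive differences to report.
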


\begin{theorem} \label{thm::AsymptoticNormalityPsi}
	Assume that Assumption \ref{ass::Model}, Assumption \ref{ass::GeneralAssumptionsConsistency} and the Moment Conditions \ref{MomCond::AsymptoticNormalityPsi} in Appendix \ref{sec::GeneralMomentConditions} hold true. Then, for every sequence $\hat \theta_{\psi,n} \in \Theta$ satisfying $\frac{1}{\sqrt{n}} \sum_{i=1}^{n} \psi(Y_i,X_i,\hat \theta_{\psi,n}) \stackrel{\mathbb{P}}{\longrightarrow} 0$, it holds that
	\begin{align}
	\sqrt{n} \big( \hat \theta_{\psi,n} - \theta_0 \big) \stackrel{d}{\longrightarrow} \mathcal{N} \left(0,\Lambda^{-1} C \Lambda^{-1} \right),
	\end{align}
	with
	\begin{align}
		\Lambda = \begin{pmatrix} \Lambda_{11} & 0 \\ 0 & \Lambda_{22}  \end{pmatrix} \qquad \text{and} \qquad	C = \begin{pmatrix} C_{11} & C_{12} \\ C_{21} & C_{22} \end{pmatrix},  
	\end{align}
	where
	\begin{align}
		\Lambda_{11} &= \frac{1}{\alpha} \mathbb{E} \left[ (XX')  f_{Y|X}(X' \theta^q_0) \bigl( \alpha G_1^{(1)}(X' \theta^q_0) + G_2(X' \theta^e_0) \bigr) \right], \label{eqn::Lambda11} \\
		\Lambda_{22} &= \mathbb{E} \big[ (X X') G_2^{(1)} (X' \theta^e_0) \big], \label{eqn::Lambda22} \\
		C_{11}	&=  \frac{1-\alpha}{\alpha} \mathbb{E} \left[ (X X') \big( \alpha G_1^{(1)}(X' \theta^q_0) + G_2(X' \theta^e_0) \big)^2 \right], \label{eqn::AsyCovMatrixC11} \\
		C_{12} &= C_{21} = \frac{1-\alpha}{\alpha}  \mathbb{E} \left[(XX') \big( X' \theta^q_0 - X' \theta^e_0 \big) \big(\alpha G_1^{(1)}(X' \theta^q_0) + G_2(X' \theta^e_0) \big)  G_2^{(1)}(X' \theta^e_0)	 \right], \label{eqn::AsyCovMatrixC12} \\
		C_{22} &= \mathbb{E} \left[ (X X') \big({G_2^{(1)}}(X' \theta^e_0) \big)^2 \left(\frac{1}{\alpha} \operatorname{Var} \big(Y - X' \theta^q_0 \big| Y \le X' \theta^q_0, X \big) + \frac{1-\alpha}{\alpha} \big(X' \theta^q_0 - X' \theta^e_0 \big)^2 \right)  \right]. \label{eqn::AsyCovMatrixC22}
	\end{align}
\end{theorem}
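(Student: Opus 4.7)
The plan is to follow the standard Huber route for Z-estimators with non-smooth estimating equations, adapted to the fact that $\psi_1$ has a jump along $\{Y = X'\theta^q\}$ but the population map $\theta \mapsto \mathbb{E}[\psi(Y,X,\theta)]$ is smooth in a neighborhood of $\theta_0$ thanks to the conditional density $f_{Y|X}$ from \ref{RegCond::ConsistencyConditionalDistribution}. First I would invoke Theorem \ref{thm::ConsistencyPsi} to restrict attention to $\hat\theta_{\psi,n}$ in a shrinking neighborhood of $\theta_0$, and then write the decomposition
\begin{align*}
0 &= \frac{1}{\sqrt{n}}\sum_{i=1}^n \psi(Y_i,X_i,\hat\theta_{\psi,n}) + o_P(1) \\
&= \frac{1}{\sqrt{n}}\sum_{i=1}^n \psi(Y_i,X_i,\theta_0) + \sqrt{n}\,\mathbb{E}[\psi(Y,X,\hat\theta_{\psi,n}) - \psi(Y,X,\theta_0)] + R_n,
\end{align*}
where $R_n$ is the centered empirical process increment. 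A classical CLT applied to the iid vector $\psi(Y_i,X_i,\theta_0)$ delivers the asymptotic variance $C = \mathbb{E}[\psi(Y,X,\theta_0)\psi(Y,X,\theta_0)']$; a Taylor expansion of $\mathbb{E}[\psi(Y,X,\theta)]$ around $\theta_0$ produces $\Lambda\sqrt{n}(\hat\theta_{\psi,n} - \theta_0) + o_P(1)$; inverting yields $\sqrt{n}(\hat\theta_{\psi,n} - \theta_0) \to \mathcal{N}(0,\Lambda^{-1}C\Lambda^{-1})$.

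The computation of $\Lambda$ is where the joint structure pays off. For $\mathbb{E}[\psi_1]$, differentiating under the integral and using $F_{Y|X}(X'\theta_0^q) = \alpha$ kills the term involving $G_1^{(2)}$ and also the $\theta^e$-derivative, leaving exactly $\Lambda_{11}$ as in (\ref{eqn::Lambda11}) and a zero upper-right block. For $\mathbb{E}[\psi_2]$, I would compute $h(X,\theta^q) := \mathbb{E}[(X'\theta^q - Y)\mathds{1}_{\{Y \le X'\theta^q\}}\mid X]$ whose $\theta^q$-derivative is $X F_{Y|X}(X'\theta^q)$ (the boundary term vanishes). At $\theta_0$ this gives $h(X,\theta_0^q) = \alpha(X'\theta_0^q - X'\theta_0^e)$, which both makes the bracketed expression in $\psi_2$ vanish in expectation (killing the $G_2^{(2)}$ contribution in the $\theta^e$-derivative and also the $\theta^q$-derivative of $\mathbb{E}[\psi_2]$), leaving only $\Lambda_{22}$ as in (\ref{eqn::Lambda22}) and a zero lower-left block. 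The matrix $C$ is then produced by direct conditional second-moment calculations: $\mathbb{E}[(\mathds{1}_{\{Y\le X'\theta_0^q\}} - \alpha)^2 \mid X] = \alpha(1-\alpha)$ gives $C_{11}$; the mixed term $C_{12}$ uses $\mathbb{E}[(X'\theta_0^q - Y)\mathds{1}_{\{Y \le X'\theta_0^q\}}(\mathds{1}_{\{Y \le X'\theta_0^q\}} - \alpha) \mid X] = (1-\alpha)\alpha(X'\theta_0^q - X'\theta_0^e)$; and $C_{22}$ follows after decomposing $\mathbb{E}[(X'\theta_0^q - Y)^2\mathds{1}_{\{Y \le X'\theta_0^q\}} \mid X]$ into its conditional mean and variance parts and collecting terms.

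The main obstacle is controlling $R_n$, since $\psi_1$ is discontinuous in $\theta^q$ and classical Taylor expansion of the sample average is unavailable. I would handle this via a stochastic equicontinuity argument: establish that the class of functions $\{\psi(\cdot,\theta) : \theta \in \Theta\}$ is $P$-Donsker (or at least asymptotically equicontinuous at $\theta_0$) by writing each component as a Lipschitz transform in the continuous parameters times a VC-class of indicators of half-spaces $\{y \le x'\theta^q\}$, using the envelope provided by Moment Conditions \ref{MomCond::AsymptoticNormalityPsi} in Appendix \ref{sec::GeneralMomentConditions}, and appealing to standard permanence/preservation results (e.g.\ van der Vaart \& Wellner). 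Combined with consistency $\hat\theta_{\psi,n} \to \theta_0$, this yields $R_n = o_P(1 + \sqrt{n}\|\hat\theta_{\psi,n} - \theta_0\|)$; absorbing the $\|\hat\theta_{\psi,n} - \theta_0\|$ term into the $\Lambda$-expansion (which is invertible by \ref{RegCond::ConsistencyFullRankCondition} and the strict positivity conditions in \ref{RegCond::GFunctions}) closes the argument.
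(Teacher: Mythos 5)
Your proposal is correct and reproduces the paper's computation of $\Lambda$ and $C$ essentially verbatim: the vanishing of the off-diagonal blocks of $\Lambda$ via $F_{Y|X}(X'\theta_0^q)=\alpha$ and $\mathbb{E}\big[(X'\theta_0^q-Y)\mathds{1}_{\{Y\le X'\theta_0^q\}}\big|X\big]=\alpha(X'\theta_0^q-X'\theta_0^e)$, the Leibniz-rule interchange of derivative and expectation, and the conditional second-moment identities behind $C_{11}$, $C_{12}$ and $C_{22}$ all match the paper's argument. Where you genuinely diverge is in the treatment of the remainder $R_n$. The paper invokes Theorem 3 of \cite{Huber1967} and verifies its conditions (N-3) by hand: Lemma \ref{lemma::normalityN3icondition} establishes the local lower bound $\|\lambda(\theta)\|\ge a\|\theta-\theta_0\|$ via an eigenvalue argument, and Lemmas \ref{lemma::normalityN3iicondition} and \ref{lemma::normalityN3iiicondition} bound the first and second moments of the modulus of continuity $u(Y,X,\theta,d)=\sup_{\tau\in\bar U_d(\theta)}\|\psi(Y,X,\tau)-\psi(Y,X,\theta)\|$ linearly in $d$, using the mean value theorem on the smooth factors and the bounded conditional density to control the indicator jumps; Huber's chaining argument then delivers the stochastic equicontinuity internally. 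You instead propose to establish that $\{\psi(\cdot,\theta):\theta\in\Theta\}$ is $P$-Donsker by combining the VC property of half-space indicators with Lipschitz-in-parameter smooth factors and preservation theorems, obtaining $R_n=o_P(1+\sqrt{n}\|\hat\theta_{\psi,n}-\theta_0\|)$ in the style of Theorem 5.21 of van der Vaart; you also implicitly replace the explicit lower bound on $\|\lambda(\theta)\|$ by differentiability of $\lambda$ at $\theta_0$ with nonsingular derivative, which is an adequate substitute. Both routes are valid. The paper's route is self-contained and makes the required moment conditions completely explicit (which is why Appendix \ref{sec::GeneralMomentConditions} is itemized the way it is); your route is shorter on paper but outsources the work to empirical-process preservation results, and to make it airtight you would still need to exhibit a square-integrable envelope for each product class (in particular for the term $\tfrac{1}{\alpha}XG_2^{(1)}(X'\theta^e)(X'\theta^q-Y)\mathds{1}_{\{Y\le X'\theta^q\}}$, whose envelope involves $\|X\|\,|Y|\sup_\tau G_2^{(1)}(X'\tau^e)$) and check that the stated moment conditions, which were tailored to Huber's lemmas, actually cover those envelopes --- a point worth verifying rather than asserting.
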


\begin{theorem}
	\label{thm::AsymptoticNormalityRho}
	Assume that Assumption \ref{ass::Model}, Assumption \ref{ass::GeneralAssumptionsConsistency} and the Moment Conditions \ref{MomCond::AsymptoticNormalityRho} in Appendix \ref{sec::GeneralMomentConditions} hold true. Then, for every sequence $\hat \theta_{\rho,n} \in \Theta$ such that
	$\frac{1}{n} \sum_{i=1}^{n} \rho(Y_i,X_i,\hat \theta_{\rho,n}) \le \inf_{\theta \in \Theta} \frac{1}{n} \sum_{i=1}^{n} \rho(Y_i,X_i, \theta) + o_{P}(n^{-1})$,
	it holds that
	\begin{align}
	\sqrt{n} \big( \hat \theta_{\rho,n} - \theta_0 \big) \stackrel{d}{\longrightarrow} \mathcal{N}\big(0, \Lambda^{-1} C \Lambda^{-1} \big),
	\end{align}
	where the matrices $\Lambda$ and $C$ are given as in Theorem \ref{thm::AsymptoticNormalityPsi}.
\end{theorem}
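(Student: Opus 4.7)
My plan is to reduce Theorem \ref{thm::AsymptoticNormalityRho} to Theorem \ref{thm::AsymptoticNormalityPsi} by showing that any approximate $\rho$-minimizer automatically satisfies a Bahadur-type representation
\[
\sqrt{n}\,(\hat\theta_{\rho,n} - \theta_0) = -\Lambda^{-1} \frac{1}{\sqrt n}\sum_{i=1}^n \psi(Y_i,X_i,\theta_0) + o_P(1).
\]
Because $\rho(Y,X,\cdot)$ fails to be differentiable on the nullset $\{Y = X'\theta^q\}$, the standard smooth M-estimator argument does not apply, and the natural tool is a general asymptotic linearization for non-smooth M-estimators in the spirit of Huber (1967), Pollard (1985), or van der Vaart's Theorem~5.23. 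Once the display above is in place, the claim follows from the multivariate CLT applied to the iid summands $\psi(Y_i,X_i,\theta_0)$, whose mean vanishes (first-order condition of $M(\theta) := \mathbb{E}[\rho(Y,X,\theta)]$ at $\theta_0$) and whose covariance equals the matrix $C$ already computed in the proof of Theorem \ref{thm::AsymptoticNormalityPsi}.

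The two inputs that such a theorem requires are (i) a second-order expansion of $M$ at $\theta_0$ and (ii) a stochastic equicontinuity statement for the centered empirical process. For (i), I would condition on $X$ and integrate out $Y$; the indicators $\mathds{1}\{Y \le X'\theta^q\}$ get smoothed by the conditional density $f_{Y|X}$, which is continuous and strictly positive in a neighborhood of $X'\theta^q_0$ by \ref{RegCond::ConsistencyConditionalDistribution}. Differentiating under the expectation — justified by the bounds in \ref{MomCond::AsymptoticNormalityRho} and mirroring the calculation that produced the Jacobian $\Lambda$ in (\ref{eqn::Lambda11})--(\ref{eqn::Lambda22}) — yields
\[
M(\theta) = M(\theta_0) + \tfrac{1}{2}(\theta - \theta_0)'\Lambda(\theta - \theta_0) + o\bigl(|\theta - \theta_0|^2\bigr),
\]
with $\Lambda$ positive definite thanks to \ref{RegCond::ConsistencyFullRankCondition} and the strict positivity of $f_{Y|X}(X'\theta^q_0)$, $G_1^{(1)}$ and $G_2^{(1)}$. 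For (ii), I would show that the class $\{\rho(\cdot,\cdot,\theta) - \rho(\cdot,\cdot,\theta_0) : \theta \in \Theta\}$ is $P$-Donsker with a square-integrable envelope: the only non-Lipschitz contribution is the indicator $\mathds{1}\{Y \le X'\theta^q\}$, which is indexed by halfspaces in $(Y,X)$-space and therefore lies in a VC-subgraph class, while the remaining summands are smooth in $\theta$ and dominated, on a neighborhood of $\theta_0$, by polynomial functions of $\|X\|$ controlled by \ref{MomCond::AsymptoticNormalityRho}.

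Combining (i), (ii), and the consistency from Theorem \ref{thm::ConsistencyRho}, a standard quadratic-approximation argument (minimize the stochastic quadratic form to order $o_P(n^{-1})$) delivers the Bahadur representation, completing the proof. The main technical obstacle is part (ii): verifying stochastic equicontinuity of the empirical process associated with the non-smooth indicator term while staying within the mild moment assumptions in \ref{MomCond::AsymptoticNormalityRho}, and doing so uniformly in a shrinking neighborhood of $\theta_0$ so that the remainder term can absorb the contributions coming from $\sqrt n(\hat\theta_{\rho,n} - \theta_0)$. By contrast, the second-order expansion of $M$ and the algebraic identification of $\Lambda$ and $C$ are essentially the same computations used in Theorem \ref{thm::AsymptoticNormalityPsi}, and can be invoked from there.
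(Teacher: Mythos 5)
Your proposal follows essentially the same route as the paper: the paper also invokes van der Vaart's Theorem 5.23, obtains consistency from Theorem \ref{thm::ConsistencyRho}, notes that $\theta\mapsto\rho(Y,X,\theta)$ is almost surely differentiable with derivative $\psi$, establishes the second-order expansion of $\mathbb{E}[\rho(Y,X,\theta)]$ at $\theta_0$ with Hessian $\Lambda$ by the same Leibniz-rule computation used for Theorem \ref{thm::AsymptoticNormalityPsi}, and concludes with the sandwich covariance $\Lambda^{-1}C\Lambda^{-1}$. The one place where you diverge is the stochastic-equicontinuity step, and there you work harder than necessary: you treat the indicator $\mathds{1}_{\{Y\le X'\theta^q\}}$ as a genuinely non-Lipschitz contribution requiring a VC-subgraph/Donsker argument. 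In fact the indicator only ever enters $\rho$ multiplied by the bracket $G_1(X'\theta^q)-G_1(Y)+\alpha^{-1}G_2(X'\theta^e)(X'\theta^q-Y)$, which vanishes exactly on the kink set $\{Y=X'\theta^q\}$; consequently $\rho$ itself is locally Lipschitz in $\theta$ with a square-integrable Lipschitz constant (the paper's Lemma \ref{lemma::RhoLipschitzContinuity}, whose third case bounds the increment across the kink using that $G_1$ is increasing and $Y$ lies between $X'\theta_1^q$ and $X'\theta_2^q$). That observation verifies the Lipschitz hypothesis of Theorem 5.23 directly and makes the empirical-process machinery you sketch unnecessary --- it is the only genuinely non-routine idea in this proof, and your write-up misses it, although your VC route would also go through with additional work on envelopes over shrinking neighborhoods. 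One small correction: $G_1^{(1)}$ is not assumed strictly positive (only $G_1$ increasing; the recommended choice is $G_1\equiv 0$), so positive definiteness of $\Lambda_{11}$ rests on $\alpha G_1^{(1)}+G_2>0$, which holds because $G_2>0$ by \ref{RegCond::GFunctions}.
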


\begin{remark}[Quantile Regression]
	Notice that the asymptotic covariance matrix of the quantile-specific parameter estimates $\hat \theta^q$ is given by $\alpha (1-\alpha) D_1^{-1} D_0 D_1^{-1}$, where
	\begin{align}
	D_1 &= \mathbb{E} \left[ (XX')  f_{Y|X}(X' \theta^q_0)  \bigl( \alpha G_1^{(1)}(X' \theta^q_0) + G_2(X' \theta^e_0) \bigr) \right] \qquad \text{and} \\
	D_0 &= \mathbb{E} \left[ (XX') \bigl( \alpha G_1^{(1)}(X' \theta^q_0) + G_2(X' \theta^e_0) \bigr)^2 \right].
	\end{align}
	This simplifies to the covariance matrix of quantile regression parameter estimates by setting
	$G_1(z) = z$ and $G_2(z) = 0$, which means ignoring the ES-specific part of our loss function and estimating equations. This demonstrates that the quantile regression method is nested in our regression procedure, also in terms of its asymptotic distribution.
\end{remark}

\begin{remark}[Asymptotic Covariance of the ES and the Oracle Estimator]
	The ES-specific part of the asymptotic covariance is mainly governed by the term $C_{22}$, which depends on the quantity
	\begin{align}
	\label{eqn::InterpretationESRegCovariance}
	\frac{1}{\alpha} \operatorname{Var} \big(Y - X' \theta^q_0 \big| Y \le X' \theta^q_0, X \big) + \frac{1-\alpha}{\alpha} \big(X' \theta^q_0 - X' \theta^e_0 \big)^2
	= \frac{1}{\alpha^2} \operatorname{Var} \left( \left. (Y - X'\theta^q_0) \mathds{1}_{\{ Y \le X' \theta^q_0\}} \right| X \right).
	\end{align} 
	It is reasonable that the asymptotic covariance of ES regression parameters depends on the truncated variance of $Y$ given $X$ as the asymptomatic covariance of mean regression parameters is driven by the conditional (non-truncated) variance of $Y$ given $X$.
	The second term $\big(X' \theta^q_0 - X' \theta^e_0 \big)^2$ in (\ref{eqn::InterpretationESRegCovariance}) is included since the ES represents a truncated mean where the truncation point itself is a statistical functional (the quantile).
	In comparison, we consider an oracle M-estimator for the ES-specific regression parameters $\theta^e$, given by the loss function
	\begin{align}
		\label{eqn::ESRegressionOracleLoss}
		\rho_{\text{Oracle}}(Y,X,\theta^e) &= (Y - X'\theta^e)^2 \mathds{1}_{\{ Y \le X' \theta^q_0\}},
	\end{align}
	where we assume that the true quantile regression parameters $\theta^q_0$ are known.
	The resulting asymptotic covariance is given by
	\begin{align}
		\operatorname{AVar} \left( \widehat \theta^e_{\text{Oracle}} \right)
		= \frac{1}{\alpha} \mathbb{E} \big[XX'\big]^{-1} \cdot \mathbb{E} \left[ (XX') \operatorname{Var} \left( \left. Y - X'\theta^e_0 \right| Y \le X'\theta^q_0, X \right) \right] \cdot \mathbb{E} \big[XX'\big]^{-1},
	\end{align}	
	which shows that the additional term $\big(X' \theta^q_0 - X' \theta^e_0 \big)^2$ is not included for this estimator with fixed truncation point $X'\theta^q_0$.
\end{remark}

\begin{remark}[Joint Estimation of the Sample Quantile and ES]
	We can use this regression framework to jointly estimate the quantile and ES of an identically distributed sample $Y_1 , \dots , Y_n$ by regressing on a constant only.
	The asymptotic covariance matrix given in Theorem \ref{thm::AsymptoticNormalityPsi} and Theorem \ref{thm::AsymptoticNormalityRho} then simplifies to $\Sigma$ with components
	\begin{align}
	\Sigma_{11} &= \frac{\alpha (1-\alpha)}{f_{Y}^2(\theta^q_0)}, \label{eqn::AsymptoticCovQuantileEstimates} \\
	\Sigma_{12} &= \Sigma_{21} = (1-\alpha) \frac{\theta^q_0 - \theta^e_0}{f_Y(\theta^q_0)},  \\
	\Sigma_{22} &= \frac{1}{\alpha} \operatorname{Var}(Y-\theta^q_0 | Y \le \theta^q_0) + \frac{1-\alpha}{\alpha} (\theta^q_0 - \theta^e_0)^2, \label{eqn::CovarianceIidESMinEstimator}
	\end{align}
	where $\theta^q_0$ and $\theta^e_0$ are the true quantile and ES of $Y$.
	The same result is obtained by \cite{Zwingmann2016}, who further allow for a distribution function for $Y$ which is not differentiable at the quantile with strictly positive derivative.
	Notice that in this simplified case without covariates, the asymptotic covariance matrix is independent of the specification functions $G_1$ and $\mathcal{G}_2$ used in the loss function and in the estimating equations.
	Furthermore, (\ref{eqn::AsymptoticCovQuantileEstimates}) implies that quantile estimates stemming from our joint estimation procedure have the same asymptotic efficiency as quantile estimates stemming from minimizing the generalized piecewise linear loss \citep{Gneiting2011b} and as sample quantiles (cf. \citealp{Koenker2005book}).
	The same holds true for the efficiency of the sample ES estimators (based on the sample quantile) of \citet{Brazauskas2008} and \cite{Chen2008}.
\end{remark}

\begin{remark}[Pseudo-$\boldsymbol{R^2}$ and the choice of $\boldsymbol{a(Y)}$]
	By choosing $a(Y) = \alpha G_1(Y) + \mathcal{G}_2(Y)$

	in (\ref{eqn::regressionrhofunction}), we can guarantee non-negative losses $\rho(Y,X,\theta) \ge 0$.
	This choice enables us to define a pseudo-$R^2$ for our joint regression framework in the sense of \cite{Koenker1999},
	\begin{align}
	R^{QE} = 1- \frac{\rho(Y,X,\hat \theta)}{\rho(Y,X,\tilde \theta)},
	\end{align}
	where $\hat \theta$ denotes the parameter estimates of the full regression model and $\tilde \theta$ denotes the parameter estimates of a regression model restricted to an intercept term only.
	However, this choice of $a(Y)$ comes at the cost of more restrictive moment conditions, since we need to impose that $\mathbb{E} \big[ G_1(Y) + \mathcal{G}_2(Y) \big] < \infty$.	
\end{remark}

\subsection{Choice of the Specification Functions}
\label{sec::ChoiceOfTheGfunctions}

The loss functions and the estimating equations given in (\ref{eqn::regressionrhofunction}) and (\ref{eqn::regressionpsifunction}) depend on two specification functions, $G_1$ and $\mathcal{G}_2$ (with derivative $G_2$), which have to fulfill the regularity conditions \ref{RegCond::GFunctions} in Assumption \ref{ass::GeneralAssumptionsConsistency}.
\cite{Fissler2016b} already mention the feasible choices $G_1(z) = 0$, $G_1(z) = z$, $G_2(z) = \exp(z)$ and $G_2(z) = \exp(z) / \big( 1+\exp(z) \big)$ in order to show that this class is non-empty.
In contrast to the loss functions of mean, quantile and expectile regression, there is no natural choice for these specification functions for the quantile and ES yet \citep{Nolde2017}.
However, as the choice of these functions strongly influences the performance of our regression procedure in terms of its asymptotic efficiency, the necessary moment conditions of the regressors and the numerical performance of the optimization algorithm, we discuss sensible selection criteria in the following.

\cite{Efron1991} and \cite{Nolde2017} argue that for M-estimation of regression parameters it is crucial that the utilized loss function is positively homogeneous of some order $b \in \mathbb{R}$ in the sense that
\begin{align}
	\rho(cY,X,c\theta) = c^b 	\rho(Y,X,\theta)
\end{align}
for all $c > 0$.
This is an important property for loss functions since the ordering of the losses should be independent of the unit of measurement, e.g. the currency we measure the prices and risk forecasts with. 
Loss functions following this property guarantee that we can change the scaling and still obtain the same optima and consequently the same parameter estimates.
For the pair consisting of the quantile and the ES, \citet{Nolde2017} characterize the full class of positively homogeneous\footnote{For $b=0$, only the loss differences are positively homogeneous. However, the ordering of the losses is still unaffected under this slightly weaker property.} loss functions of order $b$ for the case where we restrict the domain of $\mathcal{G}_2$, i.e. the conditional ES to the negative real line\footnote{Since the conditional ES of financial assets for small probability levels is always negative, this is no critical restriction.
However, for the numerical parameter estimation, we have to restrict the parameter space $\Theta$ such that $X_i' \theta^e < 0$ for all $\theta \in \Theta$ and for all $X_i$ in the underlying sample.
For details on this, we refer to Section \ref{sec::Optimization}.},
\begin{alignat}{3}
	\label{eqn::SpecFuncPosHomo1}
	&b < 0:	\qquad		
	&&G_1(z) = -c_0, \qquad 
	&& \mathcal{G}_2(z) = c_1 (-z)^b + c_0, \\
	\label{eqn::SpecFuncPosHomo2}
	&b = 0: \qquad		
	&&G_1(z) = d_0 \mathds{1}_{\{ z \le 0 \}}  + d_0' \mathds{1}_{\{ z > 0 \}}, \quad  &&\mathcal{G}_2(z) = - c_1 \log(-z) + c_0,  \\
	\label{eqn::SpecFuncPosHomo3}
	&b \in (0,1): \qquad
	&&G_1(z) = \big( d_1 \mathds{1}_{\{ z \le 0 \}}  + d_1' \mathds{1}_{\{ z > 0 \}} \big) |z|^b - c_0, \quad 
	&&\mathcal{G}_2(z) = -c_1 (-z)^b + c_0,
\end{alignat}
for some constants $c_0, d_0, d_0' \in \mathbb{R}$ with $d_0 \le d_0'$, $d_1, d_1' \ge 0$ and $c_1 > 0$. There are no positively homogeneous loss functions for the cases $b \ge 1$.
Our numerical simulations show that there is no gain in efficiency or numerical accuracy by deviating from the choice $G_1(z) = 0$ (see also \citealp{Fissler2016b, Nolde2017, Ziegel2017}), which is also consistent with the homogeneity result. Consequently, we use $G_1(z) = 0$ in the following.

A different natural guiding principle for selecting the specification functions is induced by choosing $\mathcal{G}_2$ (and $G_1$) such that the moment conditions \ref{MomCond::ConsistencyPsi} - \ref{MomCond::AsymptoticNormalityRho} in Appendix \ref{sec::GeneralMomentConditions} are as least restrictive and as parsimonious as possible. 
For instance, choosing $\mathcal{G}_2$ such that $G_2$ and its first and second derivatives are bounded functions (and $G_1(z) = 0$) results in the moment condition $\mathbb{E} \left[ ||X||^5 + ||X||^4 \mathbb{E}\big[ |Y| \big| X \big] + ||X||^3 \mathbb{E}\big[ Y^2 \big| X \big] + | a(Y) | \right] < \infty$.
This motivates the usage of bounded functions\footnote{
	Note that the positively homogeneous loss functions exhibit unbounded  $\mathcal{G}_2$ functions. However, as the function $\mathcal{G}_2(z)$ does not grow faster than linear as $z$ tends to infinity, the resulting finite moment conditions are not too restrictive.
} 
for $G_2$ such as e.g. the second example of \cite{Fissler2016b}, $G_2(z) = \exp(z) / \big( 1+\exp(z) \big)$, which is the distribution function of the standard logistic distribution.
Further examples of bounded $G_2$ functions include the distribution functions of absolutely continuous distributions on the real line.
In the simulation study in Section \ref{sec::MCStudyChoiceOfTheGfunctions}, we compare the performance of different specification functions in terms of mean squared error, asymptotic efficiency of the estimator and computation times.

\section{Numerical Estimation of the Model}
\label{sec::ModelEstimation}

In this section, we discuss the difficulties one encounters and the solutions we propose for estimating the joint regression model.
Section \ref{sec::Optimization} illustrates the numerical optimization procedure we employ for estimating the regression parameters and Section \ref{sec::CovEstimation} discusses different estimation methods for the covariance matrix of the estimator.

\subsection{Optimization}
\label{sec::Optimization}

Theorem \ref{thm::AsymptoticNormalityPsi} and Theorem \ref{thm::AsymptoticNormalityRho} imply that both, M-estimation and Z-estimation of the regression parameters $\theta$ have the same asymptotic efficiency and consequently, we discuss these estimation approaches in terms of their numerical performance in the following.
The numerical implementation of the Z-estimator relies on root-finding of the estimating equations given in (\ref{eqn::regressionpsifunction}), which we implement as in GMM-estimation by minimizing the inner product $\sum_i \psi(Y_i, X_i, \theta)' \cdot \sum_i \psi(Y_i, X_i, \theta)$.
However, the estimating equations are redescending to zero for many attractive choices of $\mathcal{G}_2$ in the sense that $\psi_2(Y,X,\theta) \to 0$ for $X'\theta^e \to -\infty$.
Consequently, for $\theta$ such that $\theta^q = \theta^q_0$ and $X'\theta^e \to - \infty$, we get the same minimal value of the Z-estimation objective function $\sum_i \psi(Y_i, X_i, \theta)' \cdot \sum_i \psi(Y_i, X_i, \theta)$ as for the true regression parameters $\theta_0$.
Thus, the Z-estimator is numerically unstable and diverges in many setups.

Consequently, we rely on M-estimation of the regression parameters in the following.
As the loss functions given in (\ref{eqn::regressionrhofunction}) are not differentiable and non-convex for all applicable choices of the specification functions \citep{FisslerThesis2017}, we apply a derivative-free global optimization technique.
More specifically, we use the Iterated Local Search (ILS) meta-heuristic of \citet{Lourenco2003}, which successively refines the parameter estimates by repeated optimizations with iteratively perturbed starting values.
Our exact implementation consists of the following steps.
First, we obtain starting values for $\theta^q$ and $\theta^e$ from two quantile regressions of $Y$ on $X$ for the probability levels $\alpha$ and $\tilde{\alpha}$, where we choose $\tilde{\alpha}$ such that the  $\tilde{\alpha}$-quantile and the $\alpha$-ES coincide under normality.
Second, using these starting values we minimize the loss function with the derivative-free and robust Nelder-Mead Simplex algorithm \citep{Nelder1965}.
Third, we perturb the resulting parameter estimates by adding normally distributed noise with zero mean and standard deviation equal to the estimated asymptotic standard errors of the initial quantile regression estimates.
Fourth, we re-optimize the model with the perturbed parameter estimates as new starting values.
If the loss is further decreased by this re-optimization, we update the estimates and otherwise, we retain the previous ones.
Fifth, we iterate over the previous two steps until the loss does not decrease in $m=10$ consecutive iterations.
Our numerical experiments indicate that this repeated optimization procedure yields estimates very close to the ones stemming from other global optimization techniques such as e.g. simulated annealing, whereas the major advantage of ILS is the considerably lower computation time.

For the choices of the specification functions which result in positively homogeneous loss functions, we have to restrict the domain of $\mathcal{G}_2$ to the negative real line as already discussed in Section \ref{sec::ChoiceOfTheGfunctions}.
Thus, we have to restrict $\Theta$ such that $X_i'\theta^e < 0$ for all $\theta \in \Theta$ and for all $i = 1, \dots , n$ during the optimization process.
Even though in financial risk management the response variable $Y$ is usually given by financial returns where the true (conditional) ES is strictly negative, there might still be some outliers $X_i$ such that $X_i'\theta^e_0 \geq 0$. 
In such a case, imposing the restriction $X_i'\theta^e < 0$ for all $i = 1, \dots , n$ during the optimization process generates substantially biased estimates for $\theta^e$.
In order to avoid this, we estimate the regression model for the transformed dependent variables $Y - \max(Y)$ for the positively homogeneous loss functions and add $\max(Y)$ to the estimated intercept parameters to undo the transformation\footnote{
	Note that this data transformation changes the average loss function as the applied loss functions are in general not translation invariant. 
	Thus, optimizing the translated loss function can lead to different parameter estimates.
	However, we do not face the risk of obtaining substantially biased estimates in cases where $X_i'\theta_0^e \geq 0$ for some $i \in \{ 1, \dots n \}$.
	Our numerical experiments indicate that the difference between estimating the model for $Y$ and for $Y - \max(Y)$ is small when $X_i'\theta^e_0 < 0$ for all  $i \in \{ 1, \dots n \}$, but can be quite substantial if there is an outlier for $X_i$ such that $X_i'\theta_0^e \geq 0$.
}.

We provide an R package for the estimation of the regression parameters \citep[see][]{ESRegPackge}.
This package contains an implementation of both, the M- and the Z-estimator, where different optimization algorithms can be chosen (ILS, simulated annealing).
The package allows for choosing the specification functions $G_1$ and $\mathcal{G}_2$ and it includes an option to estimate the model either with or without the translation of the dependent variable.
Furthermore, the covariance matrix of the parameter estimates can be estimated either by using the asymptotic theory and the resulting techniques we discuss in the next section, or by using the nonparametric iid bootstrap \citep{Efron1979}.
We recommend applying the M-estimator with the ILS algorithm as this procedure exhibits the best performance in our numerical experiments with respect to accuracy, stability and computation times.

\subsection{Asymptotic Covariance Estimation}
\label{sec::CovEstimation}

While most parts of the asymptotic covariance matrix given in \Cref{thm::AsymptoticNormalityPsi} and \Cref{thm::AsymptoticNormalityRho} are straightforward to estimate, two nuisance quantities impose some difficulties.
The first is the density quantile function $f_{Y|X} (X' \theta^q_0)$,
which is already well investigated in the quantile regression literature.
In particular, we consider the estimators proposed by \citet{Koenker1994}, henceforth denoted by \textit{iid} and by \citet{Hendricks1992}, henceforth denoted by \textit{nid}.
The main difference between these is that the first is based on the assumption that the quantile residuals are independent of the covariates, whereas the second allows for a linear dependence structure.
Both approaches depend on a bandwidth parameter which we choose according to \citet{Hall1988}.

The second nuisance quantity is the variance of the quantile residuals, conditional on the covariates and given that these residuals are negative,
\begin{align}
	\label{eqn::TruncatedVarianceEstimation}
	\operatorname{Var} \big(Y - X' \theta^q_0 \big| Y \le X' \theta^q_0, X \big) = \operatorname{Var} \big(u^q \big|u^q \le 0, X\big).
\end{align} 
Estimation of this quantity is demanding for two reasons. 
First, for very small probability levels which are typical in financial risk management such as e.g. $\alpha = 2.5\%$, the truncation $u^q \le 0$ cuts off all but very few (about $\alpha \cdot n$) observations.
Second, modeling this truncated variance conditional on the covariates $X$ is challenging, especially considering the very small sample sizes. 
Under the assumption of homoscedasticity, i.e. that the distribution of $u^q$ is independent of the covariates $X$, we can simply estimate (\ref{eqn::TruncatedVarianceEstimation}) by the sample variance of the negative quantile residuals and we refer to this estimator as \textit{ind} in the following.

We propose two further estimators which allow for a dependence of the quantile residuals on the covariates.
For this purpose, we assume a location-scale process with linear\footnote{
	This approach can further be generalized by considering more general specifications for the conditional mean and standard deviation. 
	However, our numerical experiments indicate that the estimation accuracy for the asymptotic covariance matrix does not increase by deviating from these linear specifications.
}
specifications of the conditional mean and standard deviation in order to explicitly model the conditional relationship  of $u^q$ on $X$,
\begin{align}
	\label{eqn::residuals_location_scale}
	u^q = X'\zeta + X'\phi \cdot \varepsilon,
\end{align}
for some parameter vectors $\zeta, \phi \in \mathbb{R}^k$ and where $\varepsilon \sim G(0,1)$ follows a zero mean, unit variance distribution, such that $u^q|X \thicksim G \big(X'\zeta, (X'\phi)^2 \big)$ with distribution function $F_G$ and density $f_G$.
As we need to estimate the truncated variance of $u^q$ given $u^q \le 0$, i.e. a truncated variant of $(X'\phi)^2$, one possibility is to estimate (\ref{eqn::residuals_location_scale}) only for those observations where $u^q \leq 0$. 
However, this approach particularly suffers from the very few negative quantile residuals as we need to estimate additional parameters compared to the \textit{ind} approach.

We present a feasible alternative by estimating the parameters $\zeta$ and $\phi$ using all available observations of $u^q$ and $X$ by quasi generalized pseudo maximum likelihood \citep[][Section 8.4.4]{Gourieroux1995} and we obtain the truncated conditional variance by the scaling formula
$\operatorname{Var}\left(u^q | u^q \leq 0, X \right) = \int_{-\infty}^{0} z^2 h(z)\,dz - \left(\int_{-\infty}^{0} z h(z)\,dz\right)^2$, where $h(z) = f_G(z) / F_G(0)$ is the truncated conditional density of $u^q$ given $X$ and $u^q \le 0$.
We propose one parametric estimator, henceforth denoted by \textit{scl-N}, where we assume that the distribution $G$ is the normal distribution and apply a closed-form solution to the scaling formula.
We further propose a semiparametric estimator, henceforth denoted by \textit{scl-sp}, where we estimate the distribution $G$ nonparametrically and then apply the scaling formula for this estimated density by numerical integration.

\section{Simulation Study}	
\label{sec::Simulations}	

In this section, we investigate the finite sample behavior of the M-estimator and verify the asymptotic properties derived in Section \ref{sec::AsymptoticProperties} through simulations.
Furthermore, we compare the performance of different choices for the specification functions and evaluate the precision of the different covariance matrix estimators described in Section \ref{sec::CovEstimation}.

\subsection{Data Generating Process}

In order to assess the numerical properties of estimating the joint regression model, we simulate data from a linear location-scale data generating process (DGP),
\begin{align}\label{eqn::MCStudyDGP}
	Y= X'\gamma + (X'\eta) \cdot v,
\end{align}
where $v \sim F(0,1)$ has zero mean and unit variance, $X = \big(1,X_2,\dots, X_{k} \big)'$ and $\gamma, \eta \in \mathbb{R}^k$.
For this process, the true conditional quantile and ES are linear functions in $X$, given by
\begin{align}
	\begin{aligned}
	\label{eqn::MCStudyTheoreticalQuantileAndES}
	Q_{\alpha}\left(Y | X \right) = X' (\gamma + z_\alpha \eta )   \qquad \text{and} \qquad \ES_{\alpha}\left(Y | X \right)  = X'(\gamma + \xi_\alpha \eta),
	\end{aligned}
\end{align}	
where $z_{\alpha}$ and $\xi_{\alpha}$ are the quantile and ES of the distribution $F(0,1)$, which implies that $\theta^q_{0} = \gamma + z_\alpha \eta$ and $\theta^e_{0} = \gamma + \xi_\alpha \eta$.
Furthermore, the conditional distributions of the quantile- and ES-residuals are given by
\begin{align} 
	\begin{aligned}
	\label{eqn::MCStudyDistributionResiduals}
	u^q | X  \sim F\left( -z_\alpha (X'\eta), \, (X'\eta)^2 \right) \qquad \text{and} \qquad u^e | X  \sim F \left( -\xi_\alpha (X'\eta), \, (X'\eta)^2 \right).
	\end{aligned}
\end{align}	

For the simulation study, we want to assess the performance of our regression procedure in various setups.
Thus, we specify $\gamma$, $\eta$ and $F$ in the following such that we get data which is homoscedastic (DGP-(1))  and heteroskedastic (DGP-(2)). Furthermore, we include a regression setup with multiple, correlated regressors and a leptocurtic conditional distribution (DGP-(3)),

\vspace{.5\baselineskip}

\begin{tabular}{llll}
	DGP-(1): & $X = (1,\, X_{2})$, & $X_{2} \sim \chi^2_1$ \quad and &  $Y|X \sim \mathcal{N}\bigl( -X_{2},\, 1 \bigr)$                                                                \\[.1cm]
	DGP-(2): & $X = (1,\, X_{2})$,  & $X_{2} \sim \chi^2_1$ \quad  and &  $Y|X \sim \mathcal{N} \bigl( -X_{2},\, (1 +  0.5 X_{2})^2 \bigr)$                                           \\[.1cm]
	DGP-(3): & $X  = (1,\, X_{2},\, X_{3})$ & \multicolumn{2}{l}{$X_{2},\, X_3 \sim U[0,1] \quad \text{with} \quad  \operatorname{corr}(X_2,X_3) = 0.5$ \quad and} \\
	         & \multicolumn{3}{l}{$Y|X              \sim t_5 \left( X_{2} - X_3, \, \left( 1 + X_{2} +  X_{3} \right)^2 \right)$.}
\end{tabular}
\\[.5\baselineskip]
We simulate all three processes 25,000 times with varying sample sizes of $n=250$, 500, 1000, 2000 and 5000 observations.
For each replication and for each of the sample sizes we regress the simulated $Y$'s on the covariates $X$ using our joint regression method for the probability level $\alpha=2.5\%$.

\subsection{Comparing the Specification Functions}
\label{sec::MCStudyChoiceOfTheGfunctions}

We start the discussion of the simulation results by investigating the numerical performance of the M-estimator based on different choices of the specification function\footnote{Following the reasoning of Section \ref{sec::ChoiceOfTheGfunctions} and \cite{Nolde2017,Ziegel2017}, we fix $G_1(z) = 0$ throughout the simulation study.} $\mathcal{G}_2$ used in the loss function in (\ref{eqn::regressionrhofunction}).
We use three natural examples resulting in positively homogeneous loss functions of order $b= -1$, $b= 0$ and $b= 0.5$ respectively\footnote{Our numerical simulations show that the numerical results are unaffected by different choices of the associated constants in (\ref{eqn::SpecFuncPosHomo1}) - (\ref{eqn::SpecFuncPosHomo3}).}, a bounded $G_2$ function and the (unbounded) exponential function:
\begin{align}	
\begin{aligned}
\label{eqn::G2Choices}	
\mathcal{G}_2(z) &= -1/z,  \qquad
\mathcal{G}_2(z) = - \log(-z),    \qquad
\mathcal{G}_2(z) = -\sqrt{-z},  \qquad \\
\mathcal{G}_2(z) &= \log \big( 1+\exp(z) \big),  \qquad \text{and} \qquad
\mathcal{G}_2(z) =\exp(z).
\end{aligned}
\end{align}

\Cref{fig::mc_mse} presents the sum (over the $2k$ regression parameters) of the mean squared errors (MSE) of the regression parameters for the three DGPs described above, different sample sizes and for the five choices of the specification functions given in (\ref{eqn::G2Choices}).
As implied by the asymptotic theory, we obtain consistent parameter estimates for all five choices of the specification functions as the MSEs converge to zero for all three DGPs.
However, they differ substantially with respect to their small sample properties.
The three positively homogeneous specifications result in the most accurate estimates, whereas the choices $\mathcal{G}_2(z) = -\sqrt{-z}$ and $\mathcal{G}_2(z) = - \log(-z)$ tend to perform slightly better than the choice  $\mathcal{G}_2(z) = -1/z$.
Furthermore, the bounded choice $\mathcal{G}_2(z) = \log\big(1 + \exp(z)\big)$ still performs better than the unbounded exponential function.

\begin{figure}[!htb]\centering
    \includegraphics{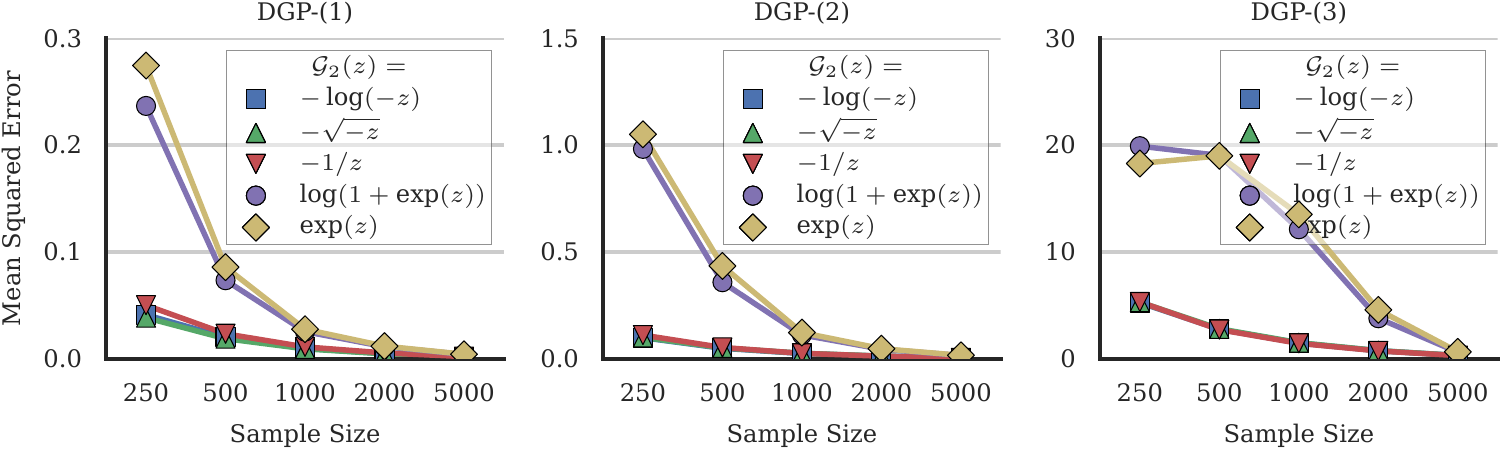}
	\caption{Sum of the mean squared errors of the parameter estimates for all three DGPs. The results are shown for the five choices of the specification functions given in (\ref{eqn::G2Choices}) and a range of sample sizes.}
	\label{fig::mc_mse}
\end{figure}

\Cref{tab::trueDiagValuesCovMatrices} reports the Frobenius norms of the lower triangular parts of the true asymptotic covariance matrices and of the respective (lower triangular) quantile-specific and the ES-specific sub-matrices for the three DGPs and for the five choices of the specification functions given in (\ref{eqn::G2Choices}).
For comparison, we also report the Frobenius norm of the lower triangular part of the asymptotic covariance of the quantile regression estimator.
We approximate the true asymptotic covariance matrix through Monte-Carlo integration with a sample size of $10^9$ using the formulas in \Cref{thm::AsymptoticNormalityPsi} and by using the true density and conditional truncated variance.
On average, the specification functions $\mathcal{G}_2(z) = -\log(-z)$ and $\mathcal{G}_2(z) = -\sqrt{-z}$ exhibit the smallest asymptotic covariances, closely followed by the third choice for a positively homogeneous loss function, $\mathcal{G}_2(z) = -1/z$.
The non-homogeneous choices lead to considerably larger asymptotic variances for all considered DGPs and sub-matrices.
Furthermore, by comparing the quantile-specific parameters of the joint estimation approach (from the positively homogeneous loss functions) to quantile regression estimates, we roughly obtain the same asymptotic efficiency.

\begin{table}[ht!]
	\caption{This table reports the Frobenius norms of the lower triangular parts of the asymptotic covariance matrices and the respective quantile-specific and the ES-specific sub-matrices for the three DGPs and for the five choices of the specification functions given in (\ref{eqn::G2Choices}).
	For comparison, we report the same quantity for the asymptotic covariance of the quantile regression estimator.}
	\label{tab::trueDiagValuesCovMatrices}
	\centering
	\footnotesize
	\begin{tabularx}{\linewidth}{X rrrr @{\hspace{0.5cm}} rrrr @{\hspace{0.5cm}} rrr}
		\toprule
		                                                         & \multicolumn{3}{c}{DGP-(1)} &  & \multicolumn{3}{c}{DGP-(2)} &  & \multicolumn{3}{c}{DGP-(3)} \\
		\cmidrule(lr){2-4}\cmidrule(lr){6-8}\cmidrule(lr){10-12} & Q    &   ES &          Full &  & Q    &   ES &          Full &  & Q      &     ES &      Full \\ \midrule
		$\mathcal{G}_2(z) = -\log(-z)$                           & 7.5  & 13.1 &           9.2 &  & 17.9 & 26.9 &          20.0 &  & 581.1  & 1739.1 &    1053.0 \\
		$\mathcal{G}_2(z) = -\sqrt{-z}$                          & 7.0  & 11.8 &           8.4 &  & 18.0 & 25.4 &          19.3 &  & 584.5  & 1740.1 &    1054.4 \\
		$\mathcal{G}_2(z) = -1/z$                                & 9.1  & 16.9 &          11.8 &  & 24.1 & 39.4 &          28.5 &  & 613.7  & 1851.9 &    1119.8 \\
		$\mathcal{G}_2(z) = \log(1 + \exp(z))$                   & 15.4 & 21.5 &          16.6 &  & 72.4 & 80.1 &          67.1 &  & 987.9  & 2393.0 &    1496.4 \\
		$\mathcal{G}_2(z) = \exp(z)$                             & 15.8 & 22.6 &          17.2 &  & 74.6 & 84.5 &          70.0 &  & 1001.9 & 2440.4 &    1524.6 \\
		Quantile Regression                                      & 6.8  &   -- &            -- &  & 21.4 &   -- &            -- &  & 600.5  &     -- &        -- \\ \bottomrule
	\end{tabularx}
\end{table}

\subsection{Comparing the Variance-Covariance Estimators}

In this section, we compare the empirical performance of the asymptotic covariance estimators discussed in \Cref{sec::CovEstimation}.
For the comparison of their precision, Figure \ref{fig::frobenius_norm} reports the average of the Frobenius norm of the lower triangular part of the differences between the estimated covariances and the empirical covariance of the estimated parameters.
\begin{figure}[htb]
	\includegraphics{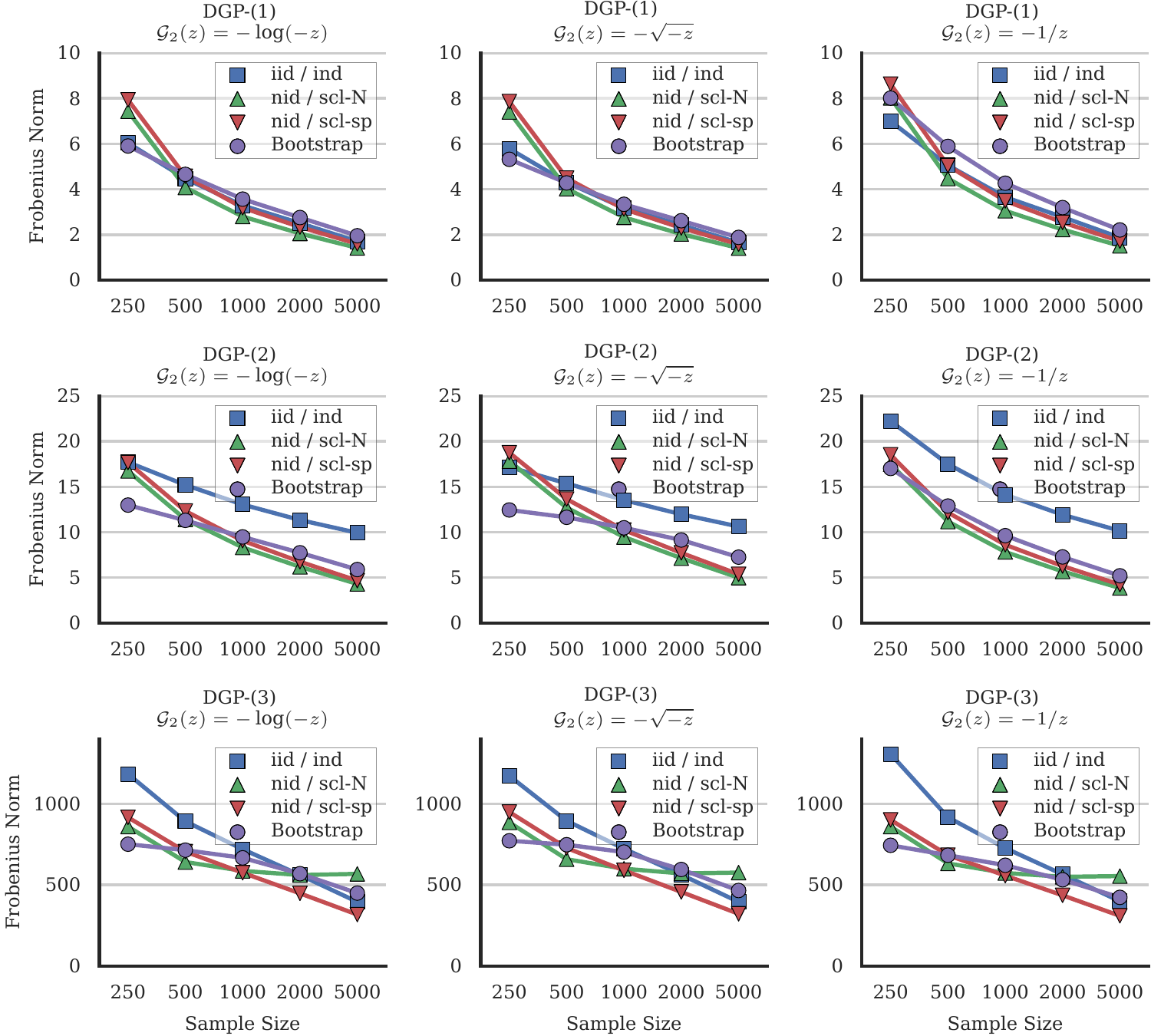}
	\caption{This figure compares four covariance estimation approaches described in Section \ref{sec::CovEstimation} for the three data generating processes, a range of sample sizes and the three positively homogeneous choices of the $\mathcal{G}_2$-functions.
	We report the average of the Frobenius norm of the lower triangular part of the differences between the estimated asymptotic covariances and the empirical covariance of the M-estimator.}
	\label{fig::frobenius_norm}
\end{figure}
We report results for the three homogeneous loss functions and the three DGPs, where each of the plots presents the average norm differences for the four covariance estimators (\textit{iid}/\textit{nid}, \textit{nid}/\textit{scl-N}, \textit{nid}/\textit{scl-sp} and the iid bootstrap) depending on the sample size.

We find that the \textit{iid}/\textit{nid} estimator performs well for the first, homoscedastic DGP whereas for the other two DGPs, it fails to capture the underlying more complicated dynamics of the data.
The \textit{nid}/\textit{scl-N} estimator outperforms the other estimation approaches in the first two DGPs, where the underlying conditional distribution follows a normal distribution whereas its performance drops for the third DGP, which follows a Student-$t$ distribution.
The performance of the flexible \textit{nid}/\textit{scl-sp} estimator is the most stable throughout all three DGPs.
Eventually, the bootstrap estimator accurately estimates the covariance for all three DGPs, whereas in comparison to the other estimators, it is particularly good in small samples.
The provided R package contains all four covariance estimators.

\section{Empirical Application}
\label{sec::EmpiricalApplication}

In this empirical application, we use our joint regression framework for forecasting the VaR and ES of the close-to-close log returns of the IBM stock.
For that purpose, we adopt the forecasting framework of \citet{Zikes2016} and jointly forecast the VaR and ES of daily financial returns $r_t$ by 
\begin{align}
	\label{eqn::ESRForecastingModel}
	Q_{\alpha}(r_t | \text{RV}_{t-1}) = \theta_1^q + \theta_2^q \text{RV}_{t-1} \quad \text{and} \quad
	\ES_{\alpha}(r_t | \text{RV}_{t-1}) = \theta_1^e + \theta_2^e \text{RV}_{t-1},
\end{align}
where $\text{RV}_t = (\sum_{i} r_{t,i}^2)^{1/2}$ denotes the realized volatility estimator \citep{Andersen1998} for day $t$, where $r_{t,i}$ denotes the $i$-th high-frequency return of day $t$.
Our dataset consists of the five minute returns of the IBM stock from January 3, 2001 to July 18, 2017 with total of 4120 days, which we obtain from the TAQ database.
We estimate the model parameters using a rolling window of 1000 days and evaluate the forecasts on the remaining 3120 days.

We compare the predictive power of this model against three standard models from the literature.
The first is the historical simulation (HS) approach, which forecasts the VaR and ES for day $t$ as the sample quantile and ES of the daily returns of the past 250 trading days.
The second is an AR(1)-GARCH(1,1)-$t$ model \citep{Bollerslev1986}, and the third is the Heterogeneous Auto-Regressive (HAR) model of \citet{Corsi2009}, based on the realized volatility estimates given above. 
Forecasts of the VaR and ES for the HAR model are obtained from the volatility forecasts and by assuming a Gaussian return distribution.
While the first two of these approaches rely on daily data only, the third one incorporates the same high frequency information as our approach.

We evaluate the forecasting power of the VaR and ES of these models by the class of strictly consistent loss (scoring) functions for the VaR and ES of \cite{Fissler2016}.
We use \textit{Murphy diagrams} introduced by \citet{Ehm2016} and \citet{Ziegel2017}, which provide a  parsimonious way to evaluate competing forecasts simultaneously for a full class of strictly consistent loss functions.
In fact, one forecasting model significantly dominates another one with respect to the full class of strictly consistent loss functions if and only if the elementary score differences plotted in the Murphy diagrams are strictly negative (positive). For further details on the theory and the implementation of Murphy diagrams, we refer to \citet{Ehm2016} and \citet{Ziegel2017}.

\begin{figure}[htb]\centering
	\includegraphics{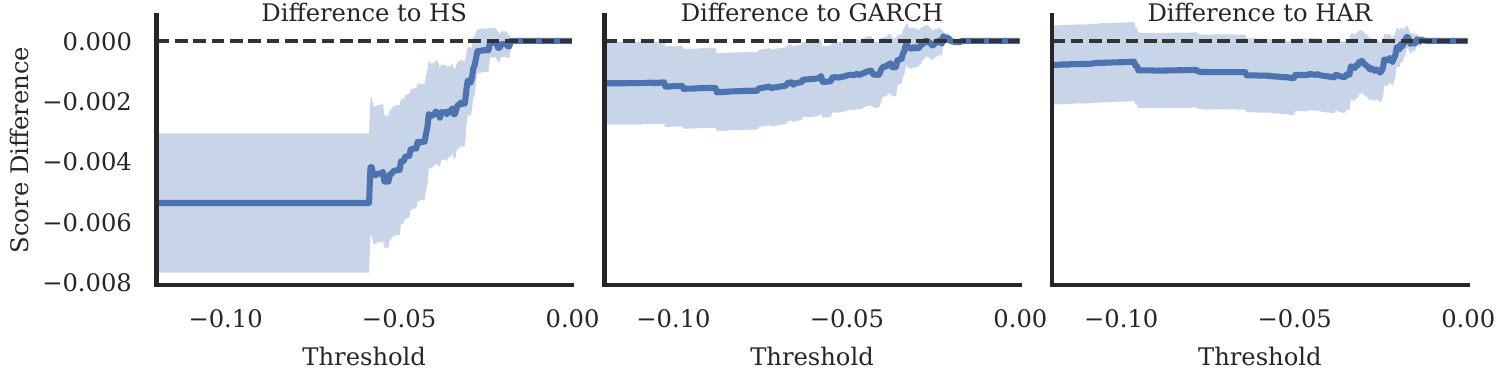}
	\caption{Elementary Score Differences of the VaR/ES Regression and the respective comparison models}
	\label{fig:murphy_diagram}
\end{figure}
\Cref{fig:murphy_diagram} displays the average of the elementary score differences of the joint VaR and ES regression model against the three alternative models together with the respective 95\% pointwise confidence bands for the elementary scores provided in \cite{Ziegel2017} for the pair VaR and ES.
Using this graphical method, we can see that the elementary score differences for the joint regression forecasting model against the historical simulation and AR(1)-GARCH(1,1)-$t$ model are significantly negative for the vast majority of threshold values.
This implies that the joint regression forecasting model significantly dominates these other two forecasting approaches.
Even though we also observe strictly negative elementary score differences in comparison against the HAR model, these differences are not significant and consequently, we cannot significantly outperform this model.

\section{Conclusion}
\label{sec::Conclusion}

In this paper, we introduce a joint regression technique for the quantile (the VaR) and the ES.
This regression approach relies on the class of strictly consistent joint loss functions introduced by \citet{Fissler2016}, which permits the joint elicitation of the quantile and the ES.
We introduce an M- and a Z-estimator for the parameters of the joint regression model.
Given a set of standard regularity conditions, we show consistency and asymptotic normality for both estimators, which we also verify numerically through extensive simulations.
The underlying loss functions, the estimating equations and the asymptotic covariance matrices of the estimators depend on the choice of two specification functions, which we investigate in terms of the resulting moment conditions, asymptotic efficiency, numerical performance and computation times.
In our numerical simulations, we find that choices resulting in positively homogeneous loss functions dominate other choices with respect to the aforementioned criteria.
Furthermore, we propose several estimation methods for the asymptotic covariance matrix, which are able to cope with different properties of the underlying data.
We provide an R package \citep[see][]{ESRegPackge}, which implements the M- and Z-estimation procedures where one can choose the underlying specification functions, the numerical optimization approach and the estimation method for the asymptotic covariance matrix.

Our new joint regression technique allows for a wide range of applications for the risk measures VaR and ES.
This regression approach can be used to model the ES (jointly with the VaR) by generalizing existing applications of quantile regression on VaR, such as e.g. in \cite{KoenkerXiao2006}, \cite{Engle2004}, \cite{Chernozhukov2001}, \cite{Zikes2016}, \cite{Halbleib2012}, \cite{Komunjer2013} and \cite{Xiao2015}.
As an illustration, we present an empirical application in this paper where we use this regression framework to jointly forecast VaR and ES based on realized volatility estimates.
Furthermore, \cite{BayerDimi2017} use this regression to develop an ES backtest which is particularly relevant in light of the recent introduction of ES into the Basel regulatory framework and the present lack of accurate backtesting methods for the ES.

\section*{Acknowledgements}

We thank Tobias Fissler, Lyudmila Grigoryeva, Roxana Halbleib, Phillip Heiler, Frederic Menninger, Winfried Pohlmeier, Patrick Schmidt, Johanna Ziegel and the participants of the Stochastics Colloquium on 11/30/2016 at the University of Konstanz for fruitful discussions and suggestions which inspired some of the results of this paper.
Financial support by the Heidelberg Academy of Sciences and Humanities (HAW) within the project ``Analyzing, Measuring and Forecasting Financial Risks by means of High-Frequency Data'', by the German Research Foundation (DFG) within the research group ``Robust Risk Measures in Real Time Settings'' and general support by the Graduate School of Decision Sciences (University of Konstanz) is gratefully acknowledged.
The computation in this work was performed on the computational resource bwUniCluster funded by the Ministry of Science, Research and the Arts Baden-Württemberg and the Universities of the State of Baden-Württemberg, Germany, within the framework program bwHPC.

\begin{appendices}

\section{Finite Moment Conditions}
\label{sec::GeneralMomentConditions}

For convenience of the supremum notation, for all $\theta \in \operatorname{int}(\Theta)$ and for $d>0$, we define the open neighborhood $U_d(\theta) = \{\tau \in \Theta :||\tau - \theta|| < d\}$ and its closure $\bar U_d(\theta) = \{\tau \in \Theta :||\tau - \theta|| \le d\}$.

\begin{enumerate}[label= ($\mathcal{M}$-\arabic*)]
	\item
	\label{MomCond::ConsistencyPsi}
	For Theorem \ref{thm::ConsistencyPsi}, we assume that the following moments are finite for some $d_0 > 0$:
	\begin{multicols}{2}
		\begin{itemize}[leftmargin=*]\itemsep0pt
			\item $\mathbb{E} [||X||^2 \sup_{\theta \in U_{d_0}(\theta_0)} | G_1^{(1)}(X' \theta^q)  | ]$
			\item $\mathbb{E} [||X||^2 \sup_{\theta \in U_{d_0}(\theta_0)} | G_1^{(2)}(X' \theta^q)  | ]$
			\item $\mathbb{E} [||X||^2 \sup_{\theta \in U_{d_0}(\theta_0)} | G_2(X' \theta^e)  |]$	
			\item $\mathbb{E} [||X||^3 \sup_{\theta \in U_{d_0}(\theta_0)} | G_2^{(1)}(X' \theta^e)  | ]$
			\item $\mathbb{E} [||X||^3 \sup_{\theta \in U_{d_0}(\theta_0)} | G_2^{(2)}(X' \theta^e)  | ]$
			\item $\mathbb{E} [||X||^2 \sup_{\theta \in U_{d_0}(\theta_0)} | G_2^{(1)}(X' \theta^e)  | \; \mathbb{E} [|Y| | X ] ]$
			\item $\mathbb{E} [||X||^2 \sup_{\theta \in U_{d_0}(\theta_0)} | G_2^{(2)}(X' \theta^e)  | \; \mathbb{E} [|Y| | X ] ]$
		\end{itemize}
	\end{multicols}

	\item
	\label{MomCond::ConsistencyRho}
	For Theorem \ref{thm::ConsistencyRho}, we assume that the following moments are finite:
	\begin{multicols}{2}
		\begin{itemize}[leftmargin=*]\itemsep0pt
			\item $\mathbb{E} [||X||^2 ]$
			\item $\mathbb{E} [\sup_{\theta \in \Theta} | G_1(X' \theta^q) | ]$
			\item $\mathbb{E} [ | G_1(Y) | ]$
			\item $\mathbb{E} [ | a(Y) | ]$
			\item $\mathbb{E} [||X|| \sup_{\theta \in \Theta} |G_2(X' \theta^e) | ]$
			\item $\mathbb{E} [\sup_{\theta \in \Theta} | G_2(X' \theta^e) | \; \mathbb{E} [|Y| | X ] ]$
			\item $\mathbb{E} [\sup_{\theta \in \Theta} | \mathcal{G}_2(X' \theta^e) | ]$
		\end{itemize}
	\end{multicols}

	\item
	\label{MomCond::AsymptoticNormalityPsi}
	For Theorem \ref{thm::AsymptoticNormalityPsi}, we assume that the following moments are finite for some constant $d_0 > 0$ and for all $\theta \in \bar U_{d_0}(\theta_0)$:
	\begin{itemize}[leftmargin=*]\itemsep0pt
		\item $\mathbb{E} [ ||X||^3 (\sup_{\tau \in \bar U_{d_0}(\theta_0)} G_1^{(1)}( X'\tau^q)) ( \sup_{\tilde \tau \in \bar U_{d_0}(\theta_0)} G_1^{(2)}( X' \tilde \tau^q))]$ 
		\item $\mathbb{E} [ ||X||^3 (\sup_{\tau \in \bar U_{d_0}(\theta_0)} G_1^{(1)}( X'\tau^q)) ( \sup_{\tilde \tau \in \bar U_{d_0}(\theta_0)} G_2^{(1)}( X' \tilde \tau^e))]$
		\item $\mathbb{E} [ ||X||^3 (\sup_{\tau \in \bar U_{d_0}(\theta_0)} G_2( X'\tau^e)) ( \sup_{\tilde \tau \in \bar U_{d_0}(\theta_0)} G_1^{(2)}( X' \tilde \tau^q)) ]$
		\item $\mathbb{E} [ ||X||^3 (\sup_{\tau \in \bar U_{d_0}(\theta_0)} G_2( X'\tau^e)) ( \sup_{\tilde \tau \in \bar U_{d_0}(\theta_0)} G_2^{(1)}( X' \tilde \tau^e)) ]$
		\item $\mathbb{E} [||X||^3 \sup_{\tau \in \bar U_{d_0}(\theta_0)}  ( G_1^{(1)}(X' \tau^q) )^2 ]$
		\item $\mathbb{E} [||X||^3 \sup_{\tau \in \bar U_{d_0}(\theta_0)}  (G_2(X' \tau^e))^2 ]$
		\item $\mathbb{E} [||X||^3 \sup_{\tau \in \bar U_{d_0}(\theta_0)}  G_1^{(1)}(X' \tau^q)  G_2(X' \tau^e) ]$
		\item $\mathbb{E} [ ||X||^5 (\sup_{\tau \in \bar U_{d_0}(\theta_0)} G_2^{(1)}( X'\tau^e)) ( \sup_{\tilde \tau \in \bar U_{d_0}(\theta_0)} G_2^{(2)}( X' \tilde \tau^e))]$
		\item $\mathbb{E} [ ||X||^5 (\sup_{\tau \in \bar U_{d_0}(\theta_0)} G_2^{(1)}( X'\tau^e))^2 ]$
		\item $\mathbb{E} [ ||X||^4 (\sup_{\tau \in \bar U_{d_0}(\theta_0)} G_2^{(1)}( X'\tau^e)) ( \sup_{\tilde \tau \in \bar U_{d_0}(\theta_0)} G_2^{(2)}( X' \tilde \tau^e)) \mathbb{E} [ |Y| | X ] ]$
		\item $\mathbb{E} [ ||X||^3 G_2^{(1)}(X' \theta^e) (\sup_{\tau \in \bar U_{d_0}(\theta_0)} G_2^{(1)}( X'\tau^e))  \mathbb{E} [ |Y| | X ] ]$
		\item $\mathbb{E} [ ||X||^3  G_2^{(1)}(X' \theta^e) (\sup_{\tau \in \bar U_{d_0}(\theta_0)} G_2^{(2)}( X'\tau^e)) \mathbb{E} [ Y^2 | X ] ]$
		\item $\mathbb{E} [ ||X||^3 (\sup_{\tau \in \bar U_{d_0}(\theta_0)} G_2^{(1)}( X'\tau^e)) ( \sup_{\tilde \tau \in \bar U_{d_0}(\theta_0)} G_2^{(2)}( X' \tilde \tau^e)) \mathbb{E}[ Y^2 | X ] ]$
	\end{itemize}

	\item
	\label{MomCond::AsymptoticNormalityRho}
	For Theorem \ref{thm::AsymptoticNormalityRho}, we assume that the following moments are finite for some constant $d_0 > 0$:
	\begin{multicols}{2}
		\begin{itemize}[leftmargin=*]\itemsep0pt
			\item $\mathbb{E} [ | G_1(Y) | ]$
			\item $\mathbb{E} [ | a(Y) | ]$
			\item $\mathbb{E} [ ||X|| \sup_{\theta \in \bar U_{d_0}(\theta_0)} | G_1^{(1)}(X' \theta^q) | ]$
			\item $\mathbb{E} [ ||X||^2 \sup_{\theta \in \bar U_{d_0}(\theta_0)} (G_1^{(1)}(X' \theta^q))^2 ]$	
			\item $\mathbb{E} [ ||X||^2 \sup_{\theta \in \bar U_{d_0}(\theta_0)} | G_1^{(1)}(X' \theta^q) G_2(X' \theta^e) | ]$
			\item $\mathbb{E} [ ||X|| \sup_{\theta \in \bar U_{d_0}(\theta_0)} | G_2(X' \theta^e) | ]$
			\item $\mathbb{E} [ ||X||^2 \sup_{\theta \in \bar U_{d_0}(\theta_0)} | G_2^{(1)}(X' \theta^e) | ]$
			\item $\mathbb{E} [ ||X||^2 \sup_{\theta \in \bar U_{d_0}(\theta_0)}  (G_2(X' \theta^e))^2 ]$
			\item $\mathbb{E} [ ||X||^4 \sup_{\theta \in \bar U_{d_0}(\theta_0)} (G_2^{(1)}(X' \theta^e))^2 ]$
			\item $ \mathbb{E} [ ||X|| \sup_{\theta \in \bar U_{d_0}(\theta_0)} | G_2^{(1)}(X' \theta^e) | \; \mathbb{E}[|Y| | X]]$
			\item $\mathbb{E} [ ||X||^3 \sup_{\theta \in \bar U_{d_0}(\theta_0)} (G_2^{(1)}(X' \theta^e))^2 \; \mathbb{E}[|Y| | X]]$
			\item $\mathbb{E} [ ||X||^2 \sup_{\theta \in \bar U_{d_0}(\theta_0)} (G_2^{(1)}(X' \theta^e))^2 \;  \mathbb{E}[Y^2|X]]$
		\end{itemize}
	\end{multicols}		
\end{enumerate}

\section{Proofs}
\label{sec::Proofs}

Henceforth, $||v||$ denotes the maximum norm for a vector $v \in \mathbb{R}^k$ and for a matrix $A$, $||A||$ denotes the row-sum matrix norm which is induced by the maximum norm for vectors.
For convenience of the supremum notation, for all $\theta \in \operatorname{int}(\Theta)$ and for some $d>0$, we define the open neighborhood $U_d(\theta) = \{\tau \in \Theta :||\tau - \theta|| < d\}$ and its closure $\bar U_d(\theta) = \{\tau \in \Theta :||\tau - \theta|| \le d\}$.
All references to Appendix \ref{sec::TechnicalResults} refer to the online supplement \cite{DimiBayer2017Supplement}.

\begin{proof}[Proof of Theorem \ref{thm::ConsistencyPsi}]
	
	We apply Theorem 2 from \cite{Huber1967} and show that the function $\psi(Y,X,\theta)$ as given in (\ref{eqn::regressionpsifunction}) satisfies the respective assumptions of this theorem.	
	Note that the parameter space $\Theta$ is assumed to be compact and thus, we do not have to show condition (B-4) in the notation of \cite{Huber1967}.
	As the product of continuous functions and the indicator function $\mathds{1}_{\{Y \le X' \theta^q \}}$, the function $\psi$ is measurable and regarded as a stochastic process in $\theta$, $\psi$ is separable in the sense of Doob as it is almost surely continuous in $\theta$ (\citealp{GikhmanSkorokhod2004}, p.164). 
	This condition assures measurability of the suprema\footnote{	
		Many other authors such as e.g. \cite{NeweyMcFadden1994,Andrews1994,VanderVaart1998} rely on outer probability in order to avoid these measurability issues.
	} given below and in Lemma \ref{lemma::normalityN3iicondition}.

	In oder to show that $\psi$ has a unique root at $\theta_0$, let us first define the sets
	\begin{align}
		U_\theta = \big\{ \omega \in \Omega \big| X(\omega)'\theta^q \not= X(\omega)'\theta^q_0 \big\}, \qquad \text{and} \qquad
		W_\theta = \big\{ \omega \in \Omega \big| X(\omega)'\theta^q = X(\omega)'\theta^q_0 \big\},
	\end{align} 
	for all $\theta \in \Theta$ such that $\Omega = W_\theta  \cup U_\theta $ and  $W_\theta  \cap U_\theta  = \emptyset$.
	We first show that $\mathbb{P}(U_\theta ) > 0$ for all $\theta \not= \theta_0$. In order to see this, we assume the converse, i.e. let us assume that for a fixed $\theta \not= \theta_0$, it holds that $\mathbb{P}(W_\theta ) = \mathbb{P} \big( X'\theta^q = X'\theta^q_0 \big) = 1$, which implies that
	\begin{align}
		(\theta^q - \theta^q_0)' \, \mathbb{E}[XX'] \, (\theta^q - \theta^q_0) = \mathbb{E} \big[ \big( X'\theta^q - X'\theta^q_0 \big)^2 \big] = 0.
	\end{align}
	However, since $\theta^q \not= \theta^q_0$, this contradicts the assumption that the matrix $\mathbb{E}[XX']$ is positive definite and we can conclude that $\mathbb{P}(U_\theta) > 0$.
	
	The quantity
	\begin{align*}
		\lambda_1(\theta) = \mathbb{E} \big[ \psi_1(Y,X,\theta) \big] =  1/\alpha \, \mathbb{E} \left[  X  \bigl( \alpha G_1^{(1)}(X' \theta^q) + G_2(X' \theta^e) \bigr) \big( F_{Y|X}(X' \theta^q) - F_{Y|X}(X' \theta^q_0) \big) \right]	
	\end{align*}	
	exists under the moment conditions \ref{MomCond::ConsistencyPsi} in Appendix \ref{sec::GeneralMomentConditions} and if $\theta^q = \theta^q_0$, it holds that $\lambda_1(\theta) = 0$.
	Now, we assume that $\theta \in \Theta$ such that $\theta^q \not= \theta^q_0$. 
	By splitting the expectation, we get that
	\begin{align*}
		\begin{aligned}
			& \lambda_1(\theta)'(\theta^q - \theta^q_0) \\
			= \; &1/\alpha \,  \mathbb{E} \left[ \bigl( \alpha G_1^{(1)}(X' \theta^q) + G_2(X' \theta^e) \bigr) \big( X'\theta^q - X' \theta^q_0 \big)  \big( F_{Y|X}(X' \theta^q) - F_{Y|X}(X' \theta^q_0) \big) \mathds{1}_{\{\omega \in W_\theta \}} \right] \\	
			+ \; &1/\alpha \, \mathbb{E} \left[ \bigl( \alpha G_1^{(1)}(X' \theta^q) + G_2(X' \theta^e) \bigr) \big( X'\theta^q - X' \theta^q_0 \big) \bigl( F_{Y|X}(X' \theta^q) - F_{Y|X}(X' \theta^q_0) \bigr) \mathds{1}_{\{\omega \in U_\theta \}} \right].	
		\end{aligned}
	\end{align*}
	The first summand is obviously zero since for all $\omega \in W_\theta $, $F_{Y|X}(X' \theta^q) - F_{Y|X}(X' \theta^q_0) = 0$.
	Since the distribution of $Y$ given $X$ has strictly positive density in a neighbourhood of $X' \theta^q_0$, we get that $F_{Y|X}$ is strictly increasing in a neighbourhood of $X' \theta^q_0$ and thus
	\begin{align}	
		\label{eqn::PositivityLambda1}
		\big( X'\theta^q - X'\theta^q_0 \big) \big( F_{Y|X}(X' \theta^q) - F_{Y|X}(X' \theta^q_0) \big) > 0
	\end{align}
	for all $\omega \in U_\theta$.
	Furthermore, since $\alpha G_1^{(1)}(X' \theta^q) + G_2(X' \theta^e) > 0$ for all $\theta \in \Theta$ and $\mathbb{P}(U_\theta) > 0$, we get that
	\begin{align*}
		&\lambda_1(\theta)'(\theta^q - \theta^q_0) \\
		&= 1/\alpha \, \mathbb{E} \left[ \bigl( \alpha G_1^{(1)}(X' \theta^q) + G_2(X' \theta^e) \bigr) \big( X'\theta^q - X' \theta^q_0 \big) \big( F_{Y|X}(X' \theta^q) - F_{Y|X}(X' \theta^q_0) \big) \mathds{1}_{\{\omega \in U_\theta \}} \right] > 0,
	\end{align*}
	and consequently $\lambda_1(\theta) \not= 0$.
	This implies that $\lambda_1(\theta) = 0$ if and only if $\theta^q = \theta^q_0$.
	Furthermore,
	\begin{align}
	 	\label{eqn::Lambda2}
		\lambda_2(\theta)
		= \mathbb{E} \left[ X G_2^{(1)}(X'\theta^e)  \left( X' \theta^q \, \bigl( F_{Y|X}(X' \theta^q)-\alpha \bigr)/\alpha + X' \theta^e - 1/\alpha \, \mathbb{E} \big[ Y \mathds{1}_{\{Y \le X' \theta^q\}} \big| X \big] \right) \right].					
	\end{align}
	Assuming that $\theta^q = \theta^q_0$, which results from $\lambda_1(\theta) = 0$, we get that $F_{Y|X}(X' \theta^q) = F_{Y|X}(X' \theta^q_0)  = \alpha$ and $1/\alpha \, \mathbb{E} \big[ Y \mathds{1}_{\{Y \le X' \theta^q_0\}} \big| X \big] = X' \theta^e_0$.
	Thus, (\ref{eqn::Lambda2}) simplifies to $\mathbb{E} \big[ (XX') G_2^{(1)} (X'\theta^e) \big] \big( \theta^e -  \theta^e_0 \big)$
	and by applying Lemma \ref{lemma::MatrixPositiveDefinite}, we get that the matrix $\mathbb{E} \big[ (XX')  G_2^{(1)}(X'\theta^e) \big]$ is positive definite for all $\theta \in \Theta$.
	Consequently, $\lambda_2(\theta) = 0$ if and only if $\theta^e = \theta^e_0$ and together with the arguments for $\lambda_1$, we get that $\lambda(\theta) = 0$ if and only if $\theta = \theta_0$.
	Eventually, assumption (B-2)' from Theorem 2 of \cite{Huber1967} follows directly from Lemma \ref{lemma::normalityN3iicondition}, which concludes this proof.
\end{proof}

\begin{proof}[Proof of Theorem \ref{thm::ConsistencyRho}]
	For this proof, we apply Theorem 5.7 from \cite{VanderVaart1998} and show that the respective assumptions of this theorem hold. 
	As in the proof of Theorem \ref{thm::AsymptoticNormalityPsi}, we can conclude measurability of the suprema since the process $\rho$ is continuous and consequently separable in the sense of Doob. 
	Thus, we do not have to rely on outer probability measures such as in \cite{VanderVaart1998}.
	We start by showing uniform convergence in probability of the empirical mean of the objective function by the help of Lemma 2.4 of \citet{NeweyMcFadden1994}. Since we have iid data, a compact parameter space $\Theta$ and $\rho(Y,X,\theta)$ is continuous for all $\theta \in \Theta$, it remains to show that there exists a dominating function  $ d(Y,X) \ge |\rho(Y,X,\theta)\big|$ for all $\theta \in \Theta$ with $\mathbb{E} \big[ d(Y,X) \big] < \infty$.
	We define
	\begin{align}
	\begin{aligned}
		d(Y,X) &=
		\sup_{\theta \in \Theta} \left|G_1(X' \theta^q) + 1/\alpha \, G_2(X' \theta^e) (X' \theta^q - Y)  \right|  + \big| G_1(Y) \big| \\
		&\quad+ \sup_{\theta \in \Theta}  \left| G_2(X' \theta^e) \big( X' \theta^e - X' \theta^q \big) \right| + \sup_{\theta \in \Theta}  \left| \mathcal{G}_2(X' \theta^e) \right| + \big| \alpha G_1(Y) +  a(Y) \big|
	\end{aligned}
	\end{align}		
	and it holds that $d(Y,X) \ge \big| \rho(Y,X,\theta) \big|$ for all $\theta \in \Theta$ and consequently, we can conclude uniform convergence in probability.
	
	We now show that $\mathbb{E} \big[\rho(Y,X,\theta) \big]$ has a unique and global minimum at $\theta = \theta_0$.
	For this, we assume that $\theta \in \Theta$ such that $\theta \not= \theta_0$ and we define the sets
	\begin{align}
		U_\theta &= \big\{ \omega \in \Omega \big| X(\omega)'\theta^q \not= X(\omega)'\theta^q_0 \quad \text{ or } \quad X(\omega)'\theta^e \not= X(\omega)'\theta^e_0 \big\} \quad \text{and} \\
		W_\theta &= \big\{ \omega \in \Omega \big|X(\omega)'\theta^q = X(\omega)'\theta^q_0 \quad \text{and} \quad X(\omega)'\theta^e = X(\omega)'\theta^e_0 \big\},
	\end{align} 
	such that $\Omega = U_\theta \cup W_\theta$ and $U_\theta \cap W_\theta = \emptyset$.
	We first show that $\mathbb{P}(U_\theta) > 0$ for all $\theta \not= \theta_0$. In order to see this, we assume the converse, i.e. we assume that $\mathbb{P}(W_\theta) = 1$, which implies that	$(\theta^q - \theta^q_0)' \, \mathbb{E}[XX'] \, (\theta^q - \theta^q_0) = \mathbb{E} \left[ \big( X'\theta^q - X'\theta^q_0 \big)^2 \right] = 0$, since  $\mathbb{P} \big( X'\theta^q = X\theta^q_0 \big) = 1$	and equivalently $(\theta^e - \theta^e_0)' \mathbb{E}[XX'] (\theta^e - \theta^e_0)  = 0$.
	However, since $\theta \not= \theta_0$ and consequently either $\theta^q \not= \theta^q_0$ or  $\theta^e \not= \theta^e_0$, this contradicts the assumption that the matrix $\mathbb{E}[XX']$ is positive definite and it follows that $\mathbb{P}(U_\theta) > 0$.

	From the joint elicitability property of the quantile and ES of \cite{Fissler2016}, Corollary 5.5 we get that for all $x \in \mathbb{R}^k$ such that $x' \theta^q \not= x'\theta_0^q$ or $x' \theta^e \not= x'\theta_0^e$, it holds that
	\begin{align}
		\mathbb{E} \big[ \rho(Y,X,\theta_0)\big| X = x \big]  < \mathbb{E} \big[\rho(Y,X,\theta) \big| X = x\big],
	\end{align}	
	since the distribution of $Y$ given $X$ has a finite first moment and a unique $\alpha$-quantile.
	Thus, for all $\omega \in U_\theta$,
	\begin{align}
		\label{eqn::ElicitabilityUsage}
		\mathbb{E} \big[ \rho(Y,X,\theta_0)\big| X  \big](\omega)  < \mathbb{E} \big[\rho(Y,X,\theta) \big| X \big](\omega).
	\end{align}	
	We now define the random variable
	\begin{align}
		h(X,\theta,\theta_0)(\omega) = \mathbb{E} \big[ \rho(Y,X,\theta_0) \big| X \big](\omega) - \mathbb{E} \big[ \rho(Y,X,\theta) \big| X \big](\omega),
	\end{align}
	and (\ref{eqn::ElicitabilityUsage}) implies that $h\big(X,\theta,\theta_0\big)(\omega) < 0$ for all $\omega \in U_\theta$.
	Since $\mathbb{P}(U_\theta) > 0$, this implies that $\mathbb{E} \left[ h(X,\theta,\theta_0) \mathds{1}_{\{\omega \in U_\theta\}} \right] < 0$.
	Furthermore, for all $\omega \in W_\theta$, it obviously holds that $h(X,\theta,\theta_0)(\omega) = 0$ and consequently $\mathbb{E} \left[ h(X,\theta,\theta_0) \mathds{1}_{\{\omega \in W_\theta\}} \right] = 0$.
	Thus, we get that
	\begin{align}
		\mathbb{E} \big[ h(X,\theta,\theta_0) \big] = \mathbb{E} \left[ h(X,\theta,\theta_0) \mathds{1}_{\{\omega \in U_\theta \}} \right] + \mathbb{E} \left[ h(X,\theta,\theta_0) \mathds{1}_{\{\omega \in W_\theta\}} \right] < 0
	\end{align}
	for all $\theta \in \Theta$ such that $\theta \not= \theta_0$, which shows that $\mathbb{E} \big[\rho(Y,X,\theta) \big]$ has a unique minimum at $\theta = \theta_0$.
\end{proof}

\begin{proof}[Proof of Theorem \ref{thm::AsymptoticNormalityPsi}]
We apply Theorem 3 of \cite{Huber1967} for the $\psi$-function as given in (\ref{eqn::regressionpsifunction}) and show the respective assumptions of this theorem.
Consistency of the Z-estimator is shown in Theorem \ref{thm::ConsistencyPsi}.
For the measureability and separability of the $\psi$ function, we refer to the proof of Theorem \ref{thm::ConsistencyPsi}. It is already shown in the proof of Theorem \ref{thm::ConsistencyPsi} that there exists a $\theta_0 \in \Theta$ such that $\lambda(\theta_0) = 0$.
For the technical conditions (N-3), we apply Lemma \ref{lemma::normalityN3icondition}, Lemma \ref{lemma::normalityN3iicondition} and Lemma \ref{lemma::normalityN3iiicondition}. 
It remains to show that $\mathbb{E} \big[ ||\psi(Y,X,\theta_0)||^2 \big] < \infty$, which follows from the subsequent computation of $C$ and the Moment Conditions \ref{MomCond::AsymptoticNormalityPsi} in Appendix \ref{sec::GeneralMomentConditions}.
The asymptotic covariance matrix is given by $\Lambda^{-1} C \Lambda^{-1}$, where $C = \mathbb{E} \big[ \psi(Y,X,\theta_0) \, \psi(Y,X,\theta_0)' \big]$
and
\begin{align}
\Lambda = \left. \frac{\partial \lambda(\theta)}{\partial \theta} \right|_{\theta = \theta_0} = \begin{pmatrix} \Lambda_{11} & \Lambda_{12} \\ \Lambda_{21} & \Lambda_{22}  \end{pmatrix} = \begin{pmatrix} \left. \frac{\partial \lambda_1(\theta)}{\partial \theta^q} \right|_{\theta_0} & \left. \frac{\partial \lambda_1(\theta)}{\partial \theta^e} \right|_{\theta_0} \\ \left. \frac{\partial \lambda_2(\theta)}{\partial \theta^q} \right|_{\theta_0} & \left. \frac{\partial \lambda_2(\theta)}{\partial \theta^e} \right|_{\theta_0} \end{pmatrix}.
\end{align}
Straightforward calculations yield the matrix $C$ as given in (\ref{eqn::AsyCovMatrixC11}) - (\ref{eqn::AsyCovMatrixC22}).
For the computation of $\Lambda$, we first notice that the function
\begin{align} 
	\mathbb{E} \big[ \psi(Y,X,\theta)\big| X \big] =
	\begin{pmatrix}
		\frac{1}{\alpha} \big(F_{Y|X}(X'\theta^q) -\alpha \big) \bigl( \alpha G_1^{(1)}(X' \theta^q) + G_2(X' \theta^e) \bigr) \vspace{0.05cm} \\
		X G_2^{(1)}(X'\theta^e) \left( X' \theta^e - X' \theta^q + \frac{1}{\alpha} \mathbb{E} \big[ (X' \theta^q - Y) \mathds{1}_{\{Y \le X' \theta^q\}} \big| X \big] \right)
	\end{pmatrix}
\end{align}	
is continuously differentiable for all $\theta$ in some neighborhood $U_{d}(\theta_0)$ around $\theta_0$, since the distribution $F_{Y|X}$ has a density which is strictly positive, continuous and bounded in this area.
Let us choose a value $\tilde \theta \in U_d(\theta_0)$ such that $X'\tilde \theta \le X'\theta$.
Then,
\begin{align}
	\begin{aligned}
	\label{eqn::ESDifferentiation}
	\frac{\partial}{\partial \theta^q} \mathbb{E} \big[Y \mathds{1}_{\{Y \le X' \theta^q\}} \big| X \big]
	&= \frac{\partial}{\partial \theta^q} \mathbb{E} \big[Y \mathds{1}_{\{Y \le X' \tilde \theta^q\}} \big| X \big] + \frac{\partial}{\partial \theta^q} \mathbb{E} \big[ Y \mathds{1}_{\{X' \tilde \theta^q < Y \le X'\theta^q\}} \big| X \big] \\
	&= \frac{\partial}{\partial \theta^q} \int_{X'\tilde \theta^q}^{X'\theta^q} y   f_{Y|X}(y) \mathrm{d}y
	= X (X' \theta^q) f_{Y|X}(X'\theta^q).
	\end{aligned}
\end{align}	
We consequently get that for all $\theta \in U_d(\theta_0)$,
\begin{align*}
	\frac{\partial}{\partial \theta^q} \mathbb{E} \big[ \psi_1(Y,X,\theta) \big| X  \big] &= 1/\alpha \, (XX') \left[ \bigl( \alpha G_1^{(1)}(X' \theta^q) + G_2(X' \theta^e) \bigr) f_{Y|X} (X' \theta^q) \right. \\
	&\qquad\qquad\qquad \left. + G_1^{(2)}(X'\theta^q) \big(F_{Y|X} (X' \theta^q)-\alpha \big) \right], \\
	\frac{\partial}{\partial \theta^e} \mathbb{E} \big[ \psi_1(Y,X,\theta) \big| X \big] &= \frac{\partial}{\partial \theta^q} \mathbb{E} \big[ \psi_2(Y,X,\theta) \big| X \big] = 1/\alpha \,  (XX') G_2^{(1)}(X'\theta^e) \big( F_{Y|X}(X' \theta^q)-\alpha \big), \\
	\frac{\partial}{\partial \theta^e} \mathbb{E} \big[ \psi_2(Y,X,\theta) \big| X \big] &= 1/\alpha \,
	(XX') G_2^{(2)}(X'\theta^e)  \left[ X' \theta^q \big( F_{Y|X}(X' \theta^q)-\alpha \big) + \alpha (X' \theta^e) -  \mathbb{E} \big[ Y \mathds{1}_{\{Y \le X' \theta^q\}} \big| X \big] \right] \\
	& \qquad \quad  + (XX') G_2^{(1)}(X'\theta^e).
\end{align*}
In order to conclude that $	\frac{\partial}{\partial \theta} \mathbb{E} \big[ \mathbb{E} \big[ \psi(Y,X,\theta)\big| X \big]\big] = \mathbb{E} \left[ \frac{\partial}{\partial \theta} \mathbb{E}\big[ \psi(Y,X,\theta) \big| X \big] \right]$,
we apply a measure-theoretical version of the Leibniz integration rule, which requires that the derivative of the integrand exists and is absolutely bounded by some integrable function $d(Y,X)$, independent of $\theta$.
For the first term, this can easily be obtained by defining
\begin{align*}
	d(Y,X) = \sup_{\theta \in U_{d}(\theta_0)} &\left| \left|  1/\alpha \, (XX') \left[ \big( \alpha G_1^{(1)}(X' \theta^q) + G_2(X' \theta^e) \big) f_{Y|X} (X' \theta^q) + G_1^{(2)}(X' \theta^q) \big( F_{Y|X} (X' \theta^q)-\alpha \big) \right] \right| \right|,
\end{align*} 
which has finite expectation by the Moment Conditions \ref{MomCond::AsymptoticNormalityPsi}. 
The other two terms follow the same reasoning. 
Inserting $\theta = \theta_0$ eventually shows (\ref{eqn::Lambda11}) and (\ref{eqn::Lambda22}).
\end{proof}

\begin{proof}[Proof of Theorem \ref{thm::AsymptoticNormalityRho}]
	For this proof, we apply Theorem 5.23 from \cite{VanderVaart1998} and show that the respective assumptions of this theorem hold.
	Theorem \ref{thm::ConsistencyRho} shows consistency of the M-estimator.
	The map $(Y,X) \mapsto \rho(Y,X,\theta)$ is obviously measurable as the sum of measurable functions.
	Furthermore, the map $\theta \mapsto \rho(Y,X,\theta)$ is almost surely differentiable since the only point of non-differentiability occurs where $Y = X'\theta^q$, which is a nullset with respect to the joint distribution of $Y$ and $X$ and for all $\theta \in \Theta$ such that  $Y \not= X'\theta^q$, its derivative is given by $\psi(Y,X,\theta)$.
	Local Lipschitz continuity with square-integrable Lipschitz-constant follows from Lemma \ref{lemma::RhoLipschitzContinuity}.	
	We have already seen in the proof of Theorem \ref{thm::ConsistencyRho} that the function $\mathbb{E} \big[ \rho(Y,X,\theta) \big]$ is uniquely minimized at the point $\theta_0$ and is twice continuously differentiable and consequently admits a second-order Taylor expansion at $\theta_0$. Thus, we have shown the necessary assumptions of Theorem 5.23 from \cite{VanderVaart1998}. 
	
	For the computation of the covariance matrix, we notice that the distribution of $Y$ given $X$ has a density $f_{Y|X}$ in a neighborhood of $X'\theta_0$, which is strictly positive, continuous and bounded. 
	Therefore, by the same arguments as in (\ref{eqn::ESDifferentiation}), we get that $\frac{\partial}{\partial \theta^q} \mathbb{E} \big[ G_1(Y) \mathds{1}_{\{Y \le X' \theta^q\}} \big| X \big] = X G_1(X'\theta^q) f_{Y|X}(X'\theta^q)$.
	Thus, straight-forward calculations yield that for all $\theta \in U_{d}(\theta_0)$, it holds that
	$\frac{\partial}{\partial \theta} \mathbb{E} \big[ \rho(Y,X,\theta) \big| X \big] = \mathbb{E} \big[ \psi(Y,X,\theta) \big| X \big]$
	and by applying the Leibniz integration rule such as in the proof of Theorem \ref{thm::AsymptoticNormalityPsi}, we finally get that
	\begin{align}
		\frac{\partial}{\partial \theta} \mathbb{E} \big[ \rho(Y,X,\theta)\big] = \mathbb{E} \big[ \psi(Y,X,\theta)\big].
	\end{align} 
	Consequently, the asymptotic covariance matrix equals the one given in Theorem \ref{thm::AsymptoticNormalityPsi}.
\end{proof}

\section{Technical Results}
\label{sec::TechnicalResults}

\begin{lemma} 
	\label{lemma::normalityN3iicondition}
	Let
	\begin{align}
	u(Y,X,\theta,d) = \sup_{\tau \in \bar U_d(\theta)} \big|\big|\psi(Y,X,\tau) - \psi(Y,X,\theta) \big|\big|
	\end{align}
	and assume that Assumption \ref{ass::Model}, Assumption \ref{ass::GeneralAssumptionsConsistency} and the Moment Conditions \ref{MomCond::ConsistencyPsi} in Appendix \ref{sec::GeneralMomentConditions} hold.
	Then, there are strictly positive real numbers $b$ and $d_0$, such that
	\begin{align} \label{eqn::normalityEu1condition}
	\mathbb{E} \big[ u(Y,X,\theta,d) \big] &\le b \cdot d  \quad\;\, \mbox{for} \quad ||\theta - \theta_0|| + d \le d_0, 
	\end{align}
	and for all $d \ge 0$.
\end{lemma}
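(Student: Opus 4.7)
The plan is to decompose $\psi(Y,X,\tau)-\psi(Y,X,\theta)$ component-wise into a \emph{smooth} piece (where only the arguments of $G_1^{(1)}, G_2, G_2^{(1)}$ change) and a \emph{jump} piece (involving differences of indicators or of the positive part $(X'\tau^q-Y)_+$), bound each separately by $d$ times a random variable with finite expectation under \ref{MomCond::ConsistencyPsi}, and conclude by the triangle inequality. For the smooth pieces I would apply the mean value theorem: since $|X'(\tau^q-\theta^q)| \le k\|X\|d$ (the factor $k$ absorbs the passage from the max to the $\ell_1$ norm), one obtains, e.g., $|G_1^{(1)}(X'\tau^q)-G_1^{(1)}(X'\theta^q)| \le \sup_{\tilde\tau\in\bar U_d(\theta)}|G_1^{(2)}(X'\tilde\tau^q)|\cdot k\|X\|d$, and analogously for $G_2$ via $G_2^{(1)}$ and $G_2^{(1)}$ via $G_2^{(2)}$. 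Multiplied by the remaining bounded factors (indicators, or the residual $X'\theta^q-X'\theta^e$), these contributions are dominated by $d$ times expressions whose expectations appear directly in \ref{MomCond::ConsistencyPsi}.

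The jump piece of $\psi_1$ is the delicate part. The key pointwise inequality is
\begin{equation*}
\sup_{\tau\in\bar U_d(\theta)}\bigl|\mathds{1}_{\{Y\le X'\tau^q\}}-\mathds{1}_{\{Y\le X'\theta^q\}}\bigr| \le \mathds{1}_{\{|Y-X'\theta^q|\le k\|X\|d\}},
\end{equation*}
so, multiplying by the supremum of the smooth prefactor $\|X(\alpha G_1^{(1)}(X'\tau^q)+G_2(X'\tau^e))\|/\alpha$ and conditioning on $X$, the density bound $\bar f$ on a neighbourhood of $X'\theta_0^q$ yields $\mathbb{P}(|Y-X'\theta^q|\le k\|X\|d\mid X)\le 2\bar f k\|X\|d$; averaging over $X$ recovers one of the $\mathbb{E}[\|X\|^2\sup|G_1^{(1)}|]$ and $\mathbb{E}[\|X\|^2\sup|G_2|]$ moments in \ref{MomCond::ConsistencyPsi}. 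For the corresponding piece of $\psi_2$ I would exploit the identity $(X'\theta^q-Y)\mathds{1}_{\{Y\le X'\theta^q\}}=(X'\theta^q-Y)_+$ together with the $1$-Lipschitz property of the positive-part map, giving $|(X'\tau^q-Y)_+-(X'\theta^q-Y)_+|\le k\|X\|d$ \emph{without} any density argument — which is important because this term carries a $|Y|$ factor after multiplication by the coefficient $XG_2^{(1)}$.

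The main obstacle is precisely this jump term in $\psi_1$: one must commute the supremum over $\tau$ with the expectation over $Y$ via the dominating indicator set, and the density bound can be invoked only on a neighbourhood of $X'\theta_0^q$. This dictates the choice of $d_0$: it has to be small enough that the random interval $[X'\theta^q-k\|X\|d,\,X'\theta^q+k\|X\|d]$ lies inside the region where $f_{Y|X}$ is bounded whenever $\|\theta-\theta_0\|+d\le d_0$, which follows from Assumption \ref{RegCond::ConsistencyConditionalDistribution}. Collecting all the multiplicative constants (including $\alpha^{-1}$, $k$, $\bar f$) into a single $b$, summing the finitely many contributions, and applying \ref{MomCond::ConsistencyPsi} to bound each expectation yields $\mathbb{E}[u(Y,X,\theta,d)]\le bd$ uniformly on $\|\theta-\theta_0\|+d\le d_0$.
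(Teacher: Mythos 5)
Your proposal is correct and follows the same overall architecture as the paper's proof: split each component of $\psi(Y,X,\tau)-\psi(Y,X,\theta)$ into a smooth part, bounded by $\mathcal{O}(d)$ via the mean value theorem applied to $G_1^{(1)}$, $G_2$, $G_2^{(1)}$, and a jump part, dominated pointwise by an indicator of the event $\{X'\theta^q_- \le Y \le X'\theta^q_+\}$ (your $\{|Y-X'\theta^q|\le k\|X\|d\}$ is the same set up to relabelling), whose conditional probability is $\mathcal{O}(d)$ by the local boundedness of $f_{Y|X}$. The one place where you genuinely deviate is the $|Y|$-weighted jump term in $\psi_2$: the paper keeps the split $(X'\tau^q-Y)\mathds{1}_{\{Y\le X'\tau^q\}} = X'\tau^q\mathds{1}_{\{\cdot\}}-Y\mathds{1}_{\{\cdot\}}$ and controls $\mathbb{E}\bigl[|Y|\mathds{1}_{\{X'\theta^q_-\le Y\le X'\theta^q_+\}}\,\big|\,X\bigr]=\int_{X'\theta^q_-}^{X'\theta^q_+}|y|f_{Y|X}(y)\,\mathrm{d}y$ by a weighted mean value theorem, which costs an extra power of $\|X\|$ and another invocation of the density bound; you instead keep the expression together as $(X'\tau^q-Y)_+$ and use the $1$-Lipschitz property of the positive part, so that after the product-rule decomposition the only term needing control in $\tau^q$ is bounded by $k\|X\|d$ deterministically, with no density argument and with the $|Y|$ factor surviving only in the smooth cross term where it is harmless. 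Your route is slightly cleaner for that term and uses marginally weaker ingredients; the paper's route has the advantage of treating all five summands of $\psi_2$ by the same two uniform techniques. Both land on moment conditions contained in ($\mathcal{M}$-1), so the proof goes through either way.
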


\begin{proof}[Proof of Lemma \ref{lemma::normalityN3iicondition}]
	
	For measurability of the suprema, we refer to the proof of Theorem \ref{thm::ConsistencyPsi}.
	Let in the following $d > 0$ and $\theta \in \Theta$ such that $||\theta - \theta_0|| + d \le d_0$.
	We first notice that for some fixed $X \in \mathbb{R}^k$ and for all $\tau \in \bar U_d(\theta)$, it holds that
	\begin{align}
	\left| \mathds{1}_{\{Y \le X' \theta^q\}} - \mathds{1}_{\{ Y \le X' \tau^q\}} \right| \le  \mathds{1}_{\{ X' \theta^q_{-} \le Y \le X'\theta^q_{+} \}}
	\end{align}	
	for all $Y \in \mathbb{R}$ and for some $\theta^q_{-}, \theta^q_{+} \in \bar U_d(\theta)$. Since $ \bar U_d(\theta)$ is compact, we get that
	\begin{align} \label{eqn::supindicatorfunction}
	\sup_{\tau \in \bar U_d(\theta)} \left| \mathds{1}_{\{Y \le X' \theta^q\}} - \mathds{1}_{\{ Y \le X' \tau^q\}} \right| \le \mathds{1}_{\{ X' \theta^q_{-} \le Y \le X'\theta^q_{+} \}}
	\end{align}	
	for all $Y \in \mathbb{R}$ and for some values $\theta^q_{-}, \theta^q_{+} \in \bar U_d(\theta)$.
	Note that the values $\theta^q_{-}$ and $\theta^q_{+}$ depend on $X$ and $\theta$, however they are independent of $Y$.
	Consequently, it holds that
	\begin{align} 
	\begin{aligned}
	\label{eqn::IndicatorFunctionInequalityOd}
	&\mathbb{E} \left[ \left. \sup_{\tau \in \bar U_d(\theta)} \left| \mathds{1}_{\{Y \le X' \theta^q\}} - \mathds{1}_{\{ Y \le X' \tau^q\}} \right| \right| X \right] 
	\le \mathbb{E} \left[ \left. \mathds{1}_{\{ X' \theta^q_{-} \le Y \le X'\theta^q_{+} \}} \right| X \right] \\
	= \; & F_{Y|X} \big(X'\theta^q_{+} \big) - F_{Y|X} \big(X'\theta^q_{-} \big)
	=  f_{Y|X}(X' \tilde \theta^q) \big(X'\theta^q_{+} - X'\theta^q_{-} \big) \\
	\le \; & 2 ||X|| \cdot  \sup_{ \tau \in \bar U_d(\theta)}f_{Y|X}(X' \tau^q) \cdot d,
	\end{aligned}
	\end{align}
	where we apply the mean value theorem for some $\tilde \theta^q$ on the line between $\theta^q_{-}$ and $\theta^q_{+}$, i.e. $\tilde \theta^q \in \bar U_d(\theta)$.

	For the first component of $\psi$, we get that
	\begin{align}
	\begin{aligned}
	\label{eqn::Psi1Od}
	&\mathbb{E} \left[  \sup_{\tau \in \bar U_d(\theta)} \big|\big| \psi_1(Y,X,\theta) - \psi_1(Y,X,\tau) \big|\big| \right] \\
	\le \; &\mathbb{E} \left[ \sup_{\tau \in \bar U_d(\theta)}  \left| \left| X \left( G_1^{(1)}(X' \theta^q) - G_1^{(1)}(X' \tau^q) + \frac{G_2(X' \theta^e) - G_2(X' \tau^e)}{\alpha} \right) \right| \right| \right] \\
	&\qquad + \mathbb{E} \left[ \sup_{\tau \in \bar U_d(\theta)}  \left| \left| X \left(G_1^{(1)}(X' \tau^q) + \frac{G_2(X' \tau^e)}{\alpha} \right) \right| \right| \cdot  \mathbb{E} \left[ \left. \sup_{\tau \in \bar U_d(\theta)}  \left| \mathds{1}_{\{Y \le X' \theta^q\}} - \mathds{1}_{\{Y \le X' \tau^q\}} \right| \right| X \right] \right].
	\end{aligned}
	\end{align}
	The first term in (\ref{eqn::Psi1Od}) is $\mathcal{O}(d)$ since $G_1^{(1)}(X' \theta^q)$ and $G_2(X' \theta^e)$ are continuously differentiable functions w.r.t $\theta$ and thus, by the mean value theorem we get that
	\begin{align}
	\begin{aligned}
	\label{eqn::continuousfunctionOd}
	\sup_{\tau \in \bar U_d(\theta)} \big| G_1^{(1)}(X' \theta^q) - G_1^{(1)}(X' \tau^q) \big| &\le \sup_{\tilde \tau \in \bar U_d(\theta)} \big|\big| X G_1^{(2)}(X' \tilde \tau^q) \big| \big| \cdot \sup_{\tau \in \bar U_d(\theta)} \big|\big| \theta^q - \tau^q \big| \big| \\
	&\le \sup_{\tilde \tau \in \bar U_d(\theta)} \big|\big| X G_1^{(2)}(X' \tilde \tau^q) \big| \big| \cdot d,
	\end{aligned}
	\end{align}
    and the respective moments are finite by assumption.
	The same arguments hold for the function $G_2$.	
	For the second term  in (\ref{eqn::Psi1Od}), we apply (\ref{eqn::IndicatorFunctionInequalityOd}) and thus get that
	\begin{align}
	\begin{aligned}
	&\mathbb{E} \left[ \sup_{\tau \in \bar U_d(\theta)}  \left| \left| X \left(G_1^{(1)}(X' \tau^q) + \frac{G_2(X' \tau^e)}{\alpha} \right) \right| \right| \cdot  \mathbb{E} \left[ \left. \sup_{\tau \in \bar U_d(\theta)}  \left| \mathds{1}_{\{Y \le X' \theta^q\}} - \mathds{1}_{\{Y \le X' \tau^q\}} \right| \right| X \right] \right] \\	
	\le \; &\mathbb{E} \left[ \sup_{\tau \in \bar U_d(\theta)} \left| \left| X \left( G_1^{(1)}(X' \tau^q) + \frac{G_2(X' \tau^e)}{\alpha} \right)\right|\right|  ||X|| \cdot \sup_{\tau \in \bar U_d(\theta)} f_{Y|X}(X' \tau^q)  \right] \cdot d.	
	\end{aligned}	
	\end{align}
	Since the density $f_{Y|X}$ is bounded in a neighborhood of $X'\theta^q_0$ and the respective moments are finite by assumption, we get that this term is also $\mathcal{O}(d)$.

	For the second component of $\psi$, we get that
	\begin{align*}
	& \mathbb{E} \left[ \sup_{\tau \in \bar U_d(\theta)}  \big|\big| \psi_2(Y,X,\theta) - \psi_2(Y,X,\tau) \big|\big| \right] \\
    \le \; & \mathbb{E} \left[\sup_{\tau \in \bar U_d(\theta)}  \big| \big| X (X' \theta^e - X' \theta^q) G_2^{(1)}(X' \theta^e) - X (X' \tau^e - X' \tau^q) G_2^{(1)}(X' \tau^e) \big| \big| \right]                                                                                                            \\
	& \qquad + \mathbb{E} \left[  \left|\left| \frac{X G_2^{(1)}(X' \theta^e) X' \theta^q }{\alpha} \right|\right| \cdot \mathbb{E} \left[ \left. \sup_{\tau \in \bar U_d(\theta)}  \big|  \left(\mathds{1}_{\{Y \le X' \theta^q\}} - \mathds{1}_{\{Y \le X' \tau^q\}} \right) \big| \right| X \right] \right] \\
	& \qquad  + \mathbb{E} \left[ \mathbb{E}\left[ \left. \sup_{\tau \in \bar U_d(\theta)}  \left| \left| \mathds{1}_{\{Y \le X' \tau^q\}} \left( \frac{X G_2^{(1)}(X' \theta^e) X' \theta^q}{\alpha} - \frac{X  G_2^{(1)}(X' \tau^e) X' \tau^q}{\alpha} \right) \right| \right| \; \right| X  \right]\right]  \\
	& \qquad + \mathbb{E} \left[ \left|\left| \frac{X G_2^{(1)}(X' \theta^e)}{\alpha} \right| \right| \cdot  \mathbb{E} \left[ \left. \sup_{\tau \in \bar U_d(\theta)}  \left| Y \left(\mathds{1}_{\{Y \le X' \theta^q\}} - \mathds{1}_{\{Y \le X' \tau^q\}} \right) \right| \right| X \right] \right] \\
	& \qquad   + \mathbb{E} \left[\mathbb{E} \left[ \left. \sup_{\tau \in \bar U_d(\theta)}  \left|\left| \frac{Y \mathds{1}_{\{Y \le X' \tau^q\}}}{\alpha} \big( X G_2^{(1)}(X' \theta^e) - X G_2^{(1)}(X' \tau^e) \big) \right| \right| \right| X  \right] \right] \\
	= \; & (\mathrm{i}) + (\mathrm{ii}) + (\mathrm{iii}) + (\mathrm{iv}) + (\mathrm{v}).
	\end{align*}
	
	The first, third and fifth term are linearly bounded by (\ref{eqn::continuousfunctionOd}) since the functions $(X' \theta^e - X' \theta^q) G_2^{(1)}(X' \theta^e)$ and $(X' \theta^q) G_2^{(1)} (X' \theta^e)$ and $G_2^{(1)}(X' \theta^e)$ are continuously differentiable.
	For the second term, we use the arguments from (\ref{eqn::IndicatorFunctionInequalityOd}).
	For the fourth term, we use similar arguments as in (\ref{eqn::IndicatorFunctionInequalityOd}), and get that there exist some $\theta^q_{-},\theta^q_{+} \in \bar U_d(\theta)$ and a value $\tilde \theta^q$ on the line between $\theta^q_{-}$ and $\theta^q_{+}$, such that
	\begin{align}
	\begin{aligned}
	\label{eqn::IndicatorFunctionPsi2FourthTerm}
	&\mathbb{E} \left[ \left|\left| \frac{X G_2^{(1)}(X' \theta^e)}{\alpha} \right| \right|  \mathbb{E} \left[ \left.  \sup_{\tau \in \bar U_d(\theta)} \left| Y \left(\mathds{1}_{\{Y \le X' \theta^q\}} - \mathds{1}_{\{Y \le X' \tau^q\}} \right) \right| \right| X \right] \right] \\
	\le \; &\mathbb{E} \left[\left|\left| \frac{X G_2^{(1)}(X' \theta^e)}{\alpha} \right| \right| \mathbb{E} \left[ |Y| \left. \mathds{1}_{\{ X' \theta^q_{-} \le Y \le X'\theta^q_{+} \}} \right| X \right] \right] \\
	= \; &\mathbb{E} \left[ \left|\left| \frac{X G_2^{(1)}(X' \theta^e)}{\alpha} \right| \right| \int_{X'\theta^q_{-}}^{X'\theta^q_{+}} |y| f_{Y|X} (y) \mathrm{d}y \right] \\
	\le \; &\mathbb{E} \left[\left|\left| \frac{X G_2^{(1)}(X' \theta^e)}{\alpha} \right| \right| |X' \tilde \theta^q| f_{Y|X}(X' \tilde \theta^q) \big(X'\theta^q_{+} - X'\theta^q_{-} \big) \right] \\
	\le \; &\frac{2}{\alpha} \mathbb{E} \left[G_2^{(1)}(X' \theta^e) \big|\big| X \big|\big|^2 \sup_{\tau \in \bar U_d(\theta)} |X' \tau^q| f_{Y|X}(X' \tau^q)  \right] \cdot d
	= \mathcal{O}(d)
	\end{aligned}
	\end{align}
	since $f_{Y|X}$ is bounded in a neighborhood of $X'\theta_0$ and the respective moments exist by assumption.
	This concludes the proof of the lemma.
\end{proof}

\begin{lemma} \label{lemma::MatrixPositiveDefinite}
	Let the random variable $X \in \mathbb{R}^k$ with distribution $\mathbb{P}$ be such that its second moments exist and the matrix $\mathbb{E}[X X']$ is positive definite. Furthermore, let $\tilde \Theta \subset \mathbb{R}^k$ be a compact subspace with nonempty interior and let $g: \mathbb{R}^k \times \tilde \Theta \to \mathbb{R}$ be a strictly positive function.
	Then, the matrix
	\begin{align}
	\mathbb{E}\big[ (X X') g(X,\theta) \big] 
	\end{align}
	is also positive definite.
\end{lemma}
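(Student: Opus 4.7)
The plan is to reduce positive definiteness of $\mathbb{E}[(XX')g(X,\theta)]$ to the scalar statement $v'\mathbb{E}[(XX')g(X,\theta)]v > 0$ for every nonzero $v \in \mathbb{R}^k$ and every fixed $\theta \in \tilde\Theta$, and then exploit strict positivity of $g$ together with the hypothesis on $\mathbb{E}[XX']$.

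First I would fix an arbitrary $\theta \in \tilde\Theta$ and a nonzero vector $v \in \mathbb{R}^k$, and rewrite the quadratic form as
\begin{align*}
v'\mathbb{E}\bigl[(XX')g(X,\theta)\bigr]v = \mathbb{E}\bigl[(v'X)^2 g(X,\theta)\bigr],
\end{align*}
which is valid by linearity of expectation (the expectation of the matrix is assumed to exist, since otherwise the conclusion is vacuous). Because $g > 0$ and $(v'X)^2 \geq 0$, the integrand is nonnegative, so the expectation is at least zero; the task is to rule out equality.

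Next I would argue that equality would force $(v'X)^2 g(X,\theta) = 0$ almost surely, and strict positivity of $g$ then gives $v'X = 0$ almost surely. I would combine this with the full-rank assumption via the identity
\begin{align*}
0 = \mathbb{E}\bigl[(v'X)^2\bigr] = v'\mathbb{E}[XX']v,
\end{align*}
which contradicts positive definiteness of $\mathbb{E}[XX']$ since $v \neq 0$. Therefore $\mathbb{P}(v'X \neq 0) > 0$, and on this set the integrand $(v'X)^2 g(X,\theta)$ is strictly positive; hence its expectation is strictly positive as well.

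This argument is essentially mechanical and I do not foresee a serious obstacle; the compactness and nonempty-interior assumptions on $\tilde\Theta$ play no role in the pointwise-in-$\theta$ conclusion and appear only to make the statement self-contained for the applications in Theorem \ref{thm::ConsistencyPsi}. The only minor subtlety is the implicit assumption that the matrix $\mathbb{E}[(XX')g(X,\theta)]$ is finite; this is ensured in the relevant applications by the moment conditions of Appendix \ref{sec::GeneralMomentConditions}, and does not need to be re-proven here.
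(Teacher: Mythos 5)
Your proposal is correct and follows essentially the same route as the paper: both reduce the claim to $v'\mathbb{E}[(XX')g(X,\theta)]v = \mathbb{E}[(v'X)^2 g(X,\theta)]$, use positive definiteness of $\mathbb{E}[XX']$ to get $\mathbb{P}(v'X \neq 0) > 0$, and then use strict positivity of $g$ to conclude the expectation of this nonnegative random variable is strictly positive. The only cosmetic difference is that you argue by contradiction while the paper argues directly; your closing remarks about integrability and the irrelevance of the compactness assumption are accurate.
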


\begin{proof}[Proof of Lemma \ref{lemma::MatrixPositiveDefinite}]
	Since  $\mathbb{E}[X X']$ is positive definite, we know that for all $z \in \mathbb{R}^k$ with $z \not= 0$, it holds that $0 < z' \mathbb{E}[X X'] z = \mathbb{E}[z' (X X') z] = \mathbb{E}[(X'z)^2]$ and consequently  $\mathbb{P} \big( X'z \not= 0 \big) > 0 $.
	Since $\sqrt{g(X,\theta)}$ is a strictly positive scalar for all $\theta \in \tilde \Theta$, it also holds that $\mathbb{P} \big( (X'z) \sqrt{g(X,\theta)} \not= 0 \big) > 0$ and thus, for all $z \not= 0$,
	\begin{align}
	z' \mathbb{E}\big[(X X') g(X,\theta) ] z = \mathbb{E}\left[ \left(X'z \sqrt{g(X,\theta)} \right)^2 \right] > 0.
	\end{align}
	This positivity statement holds since $\big(X'z \sqrt{g(X,\theta)}\big)^2$ is a non-negative random variable and $\mathbb{P} \big( (X'z) \sqrt{g(X,\theta)} \not= 0 \big) > 0$.
	This shows that the matrix $\mathbb{E}\big[ (X X') g(X,\theta) \big]$ is positive definite. 
\end{proof}

\begin{lemma} \label{lemma::normalityN3icondition}			
	Assume that Assumption \ref{ass::Model}, Assumption  \ref{ass::GeneralAssumptionsConsistency} and the Moment Conditions \ref{MomCond::AsymptoticNormalityPsi} in Appendix \ref{sec::GeneralMomentConditions} hold.
	Then, for
	\begin{align}
	\lambda(\theta) = \mathbb{E}\big[ \psi(Y,X,\theta)\big],
	\end{align}
	there are strictly positive numbers $a,d_0$, such that
	\begin{align}
	||\lambda(\theta)|| \ge a \cdot ||\theta-\theta_0||  \quad\;\; \mbox{for} \quad ||\theta - \theta_0||  \le d_0.
	\end{align}
\end{lemma}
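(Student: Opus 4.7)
The plan is to establish a local linear lower bound on $\lambda$ around $\theta_0$ via a first-order Taylor expansion, then invoke the positive definiteness of the limiting Jacobian $\Lambda$.

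First, I would recall from the proof of Theorem 2.4 that $\lambda(\theta_0)=0$, and from the computations in the proof of Theorem 2.6 that $\lambda$ is continuously differentiable on some open neighbourhood $U_{d_1}(\theta_0)\subset\operatorname{int}(\Theta)$, with Jacobian at $\theta_0$ equal to the block diagonal matrix $\Lambda=\operatorname{diag}(\Lambda_{11},\Lambda_{22})$ given in \eqref{eqn::Lambda11}--\eqref{eqn::Lambda22}. The differentiability under the integral sign relies on the Leibniz rule exactly as in the proof of Theorem 2.6, using that $f_{Y|X}$ is bounded in a neighbourhood of $X'\theta_0^q$ together with the Moment Conditions in \ref{MomCond::AsymptoticNormalityPsi}.

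Next, I would show that $\Lambda$ is invertible by proving each diagonal block is positive definite through Lemma C.2. For $\Lambda_{11}$, the scalar weight $(1/\alpha)f_{Y|X}(X'\theta_0^q)\bigl(\alpha G_1^{(1)}(X'\theta_0^q)+G_2(X'\theta_0^e)\bigr)$ is strictly positive almost surely, since $f_{Y|X}>0$ in a neighbourhood of $X'\theta_0^q$ by \ref{RegCond::ConsistencyConditionalDistribution}, $G_1$ is increasing so $G_1^{(1)}\geq 0$, and $G_2>0$ by \ref{RegCond::GFunctions}. For $\Lambda_{22}$, the weight $G_2^{(1)}(X'\theta_0^e)$ is strictly positive by \ref{RegCond::GFunctions}. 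Lemma C.2 combined with \ref{RegCond::ConsistencyFullRankCondition} then gives positive definiteness of both blocks, and hence of $\Lambda$. Let $a_0>0$ denote the smallest singular value of $\Lambda$, so that $\|\Lambda v\|\geq a_0\|v\|$ for all $v\in\mathbb{R}^{2k}$ (using equivalence of norms on finite-dimensional space).

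Finally, by the first-order Taylor expansion of $\lambda$ at $\theta_0$,
\begin{equation*}
\lambda(\theta)=\Lambda(\theta-\theta_0)+r(\theta),\qquad \|r(\theta)\|=o(\|\theta-\theta_0\|)\text{ as }\theta\to\theta_0.
\end{equation*}
By the reverse triangle inequality, $\|\lambda(\theta)\|\geq\|\Lambda(\theta-\theta_0)\|-\|r(\theta)\|\geq a_0\|\theta-\theta_0\|-\|r(\theta)\|$. Choosing $d_0\in(0,d_1]$ small enough so that $\|r(\theta)\|\leq (a_0/2)\|\theta-\theta_0\|$ whenever $\|\theta-\theta_0\|\leq d_0$, we obtain $\|\lambda(\theta)\|\geq (a_0/2)\|\theta-\theta_0\|$ on that neighbourhood, and the claim follows with $a=a_0/2$.

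The main obstacle is the clean justification that $\lambda$ is $C^1$ on a neighbourhood of $\theta_0$, i.e.\ that the differentiation under the expectation in the computation of $\Lambda$ is legitimate on a whole neighbourhood (not only at $\theta_0$). This reduces to checking a domination condition of the form already employed in the proof of Theorem 2.6, and is delivered by the finite moment hypotheses in \ref{MomCond::AsymptoticNormalityPsi} together with the boundedness of $f_{Y|X}$ near $X'\theta_0^q$; everything else is essentially a soft argument combining Taylor's theorem with Lemma C.2.
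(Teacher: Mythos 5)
Your proof is correct, but it takes a genuinely different route from the paper's. You argue via the first-order Taylor expansion $\lambda(\theta)=\Lambda(\theta-\theta_0)+o(\|\theta-\theta_0\|)$ and the invertibility of the block-diagonal Jacobian $\Lambda=\operatorname{diag}(\Lambda_{11},\Lambda_{22})$, delegating the analytic work to the differentiability of $\lambda$ on a neighbourhood of $\theta_0$ (established by the Leibniz-rule argument in the proof of Theorem \ref{thm::AsymptoticNormalityPsi}, which is independent of this lemma, so there is no circularity). The paper instead avoids invoking the Jacobian altogether: it applies the mean value theorem conditionally on $X$ to $F_{Y|X}$ and to the specification functions, obtaining the exact representations $\lambda_1(\theta)=A(\theta)(\theta^q-\theta^q_0)$ and $\lambda_2(\theta)=b(\theta)+D(\theta)(\theta^e-\theta^e_0)$ with $A(\theta),D(\theta)$ positive definite by Lemma \ref{lemma::MatrixPositiveDefinite} and $\|b(\theta)\|\le c_3\|\theta-\theta_0\|^2$; it then splits into cases according to whether the maximum norm of $\theta-\theta_0$ is attained in the quantile or the ES block, and obtains a uniform lower bound on the smallest eigenvalue of $A(\theta)$ over the compact ball via continuity of eigenvalues. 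Your argument is shorter and more modular, and the $o(\cdot)$ remainder plus reverse triangle inequality is the standard textbook device; the paper's argument yields explicit constants, needs only pointwise mean-value representations rather than full differentiability of $\lambda$ as a map on $\Theta$, and makes transparent exactly which moment conditions control the quadratic cross term $b(\theta)$. Both proofs rest on the same two pillars: Lemma \ref{lemma::MatrixPositiveDefinite} together with \ref{RegCond::ConsistencyFullRankCondition} for positive definiteness of the weighted second-moment matrices, and the strict positivity of $f_{Y|X}(X'\theta_0^q)$, $\alpha G_1^{(1)}+G_2$ and $G_2^{(1)}$ from \ref{RegCond::ConsistencyConditionalDistribution} and \ref{RegCond::GFunctions}. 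One small point to make explicit in your write-up: the block-diagonality of $\Lambda$ (i.e.\ the vanishing of the off-diagonal blocks) holds precisely because $F_{Y|X}(X'\theta_0^q)=\alpha$ at the true parameter, and this is what lets you reduce invertibility of $\Lambda$ to positive definiteness of the two diagonal blocks.
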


\begin{proof}[Proof of Lemma \ref{lemma::normalityN3icondition}]
	Let $d_0 > 0$ and let $||\theta - \theta_0||  \le d_0$. Then, applying the mean value theorem, we get that
	\begin{align}
	\lambda_1(\theta) =  \frac{1}{\alpha} \mathbb{E} \left[  (XX') \big( \alpha G_1^{(1)}(X' \theta^q) + G_2(X' \theta^e) \big)  f_{Y|X}(X' \tilde \theta^q) \right]	 (\theta^q - \theta^q_0)
	\end{align}
	for some  $\tilde \theta^q$ on the line between $\theta^q$ and $\theta^q_0$.
	Similarly, for the second component we get that
	\begin{align}
	\begin{aligned}
	\label{eqn::Lambda2InverseLipschitz}
	\lambda_2(\theta)
	= \; & \mathbb{E} \left[X  \frac{G_2^{(1)}(X' \theta^e) f_{Y|X}(X' \tilde \theta^q)}{\alpha} \big[ X' (\theta^q - \theta^q_0) \big] \big[X' (\tilde{\theta^q} - \theta^q) \big] \right] \\
	+\;&\mathbb{E} \big[ (X X') G_2^{(1)}(X' \theta^e) \big]	 (\theta^e - \theta^e_0),
	\end{aligned}
	\end{align}
	where $\tilde \theta^q$ lies on the line between $\theta^q$ and $\theta^q_0$.

	We first assume that $||\theta - \theta_0|| = ||\theta^q  - \theta^q_0||$, i.e. $||\theta^q  - \theta^q_0|| \ge ||\theta^e  - \theta^e_0||$. 
	Since the matrix 
	\begin{align}
	A(\theta) := \mathbb{E} \left[  (XX') \frac{ \big( \alpha G_1^{(1)}(X' \theta^q) + G_2(X' \theta^e) \big)}{\alpha} f_{Y|X}(X' \tilde \theta^q) \right]
	\end{align}
	exists and has full rank for all $\theta \in \Theta$ by Lemma \ref{lemma::MatrixPositiveDefinite} and is obviously symmetric, $A$ has strictly positive real Eigenvalues $\gamma_1(\theta),\dots,\gamma_k(\theta)$ with minimum $\gamma_{(1)}(\theta)$ and we thus get that\footnote{For a symmetric matrix $A$ with full rank, we can find an orthogonal basis of Eigenvectors $\{v_1, \dots, v_k \}$ with corresponding nonzero Eigenvalues $\{ \gamma_1(\theta) , \dots , \gamma_k(\theta) \}$ such that $x = \sum b_j v_j$ with $b_j \in \mathbb{R}$. Then, $||Ax|| = || A \sum b_j v_j || = || \sum b_j A v_j || = || \sum b_j \gamma_j v_j || \ge \min{|\gamma_j|} \cdot || \sum b_j v_j || = \min{|\gamma_j|} \cdot ||x||$.}
	\begin{align} \label{eqn::MatrixVectorNormGeEq}
	||\lambda(\theta)|| \ge ||\lambda_1(\theta)|| &= || A(\theta) (\theta^q  - \theta^q_0) || \ge \gamma_{(1)}(\theta) \cdot || \theta^q  - \theta^q_0 || \\
	&\ge \left( \inf_{||\theta - \theta_0 || \le d_0} \gamma_{(1)}(\theta) \right) \cdot || \theta^q  - \theta^q_0 || = c_1 \, || \theta  - \theta_{0} ||.
	\end{align}
	Since $||\theta - \theta_0 || \le d_0$ is a compact set and the function $\theta \mapsto \inf_{||\theta - \theta_0 || \le d_0} \gamma_{(1)}(\theta)$, where $\gamma_{(1)}(\theta)$ is the smallest Eigenvalue of the matrix $A(\theta)$, is continuous\footnote{
		This follows since the entries of the matrix $A(\theta)$ are continuous in $\theta$ as the expectation of a continuous function which is dominated by an integrable function is again continuous by the dominated convergence theorem. Furthermore, the Eigenvalues of a matrix are the solution of the characteristic polynomial, which has continuous coefficients since our matrix entries are continuous in $\theta$. Eventually, since the roots of any polynomial with continuous coefficients are again continuous, we can conclude that the Eigenvalues of $A(\theta)$ are continuous in $\theta$.
	},
	we get that the infimum coincides with the minimum and thus, the constant $c_1 := \inf_{||\theta - \theta_0 || \le d_0} \gamma_{(1)}(\theta)$ is strictly positive and does not depend on $\theta$.
	\par 
	Now, we assume that $||\theta - \theta_0|| = ||\theta^e  - \theta^e_0|| \le d_0$, i.e. $||\theta^e  - \theta^e_0|| \ge ||\theta^q  - \theta^q_0||$. 
	For the first term of $\lambda_2(\theta)$, given in (\ref{eqn::Lambda2InverseLipschitz}), we define the vector
	\begin{align}
	b(\theta) :=  \mathbb{E} \left[X \frac{G_2^{(1)}(X' \theta^e) f_{Y|X}(X' \tilde \theta^q)}{\alpha} \big[ X' (\theta^q - \theta^q_0) \big] \big[X' \tilde\theta^q- X'\theta^q) \big] \right],
	\end{align}
	and for its $l$-th component, we get that
	\begin{align} 
	\begin{aligned}
	\label{eqn::BiComponentOd02}
	|b_l(\theta)| &= \left| \sum_{i,j}  (\theta^q_i - \theta^q_{0i}) (\tilde \theta^q_j - \theta^q_j)  \mathbb{E} \left[X_i X_j X_l \frac{G_2^{(1)}(X' \theta^e) f_{Y|X}(X' \tilde \theta^q)}{\alpha} \right] \right| \\
	&\le  \sum_{i,j}  \mathbb{E} \left[ \left| X_i X_j X_l \frac{G_2^{(1)}(X' \theta^e) f_{Y|X}(X' \tilde \theta^q)}{\alpha} \right| \right] \cdot |\theta^q_i - \theta^q_{0i}| \cdot |\tilde \theta^q_j - \theta^q_j|  \\
	&\le c_2 \sum_{i,j}  |\theta^q_i - \theta^q_{0i}| \cdot |\tilde \theta^q_j - \theta^q_j| \\
	&\le c_2 k^2 ||\theta - \theta_0||^2,
	\end{aligned}
	\end{align} 	
	for all $l = 1,\dots,k$, which implies that
	\begin{align}
	\label{eqn::BvectorOd02}
	||b(\theta)|| \le c_3 || \theta - \theta_0||^2,
	\end{align}
	for some $c_3 > 0$.
	For $D(\theta) := \mathbb{E} \big[ (XX') G_2^{(1)}(X' \theta^e) \big]$, it holds that $||D(\theta) (\theta^e - \theta^e_0)|| \ge c_4 ||\theta^e - \theta^e_0|| = c_4 ||\theta - \theta_0||$ for $c_4 > 0$ by the same arguments as in (\ref{eqn::MatrixVectorNormGeEq}). 
	From (\ref{eqn::BiComponentOd02}), we can choose $d_0$ small enough such that  
	\begin{align}
	\label{eqn::Choiced_0SmallEnough}
	2 ||b(\theta)|| \le 2 c_3 ||\theta - \theta_0||^2 \le c_4 ||\theta - \theta_0|| \le ||D(\theta) (\theta^e - \theta^e_0)||.
	\end{align}
	Furthermore, by the submultiplicativity of the matrix norm, we also get that $||D(\theta) (\theta^e - \theta^e_0)|| \le ||D(\theta)|| \cdot ||\theta^e - \theta^e_0|| = c_5 ||\theta^e - \theta^e_0|| $ and by the inverse triangle inequality, we get that 
	\begin{align}
	||\lambda(\theta) || \ge ||\lambda_2(\theta) || = \big|\big| D(\theta) (\theta^e - \theta^e_0) + b(\theta) \big|\big| \ge \Big| ||D(\theta) (\theta^e - \theta^e_0)|| - ||b(\theta)|| \Big|.
	\end{align}
	From (\ref{eqn::Choiced_0SmallEnough}), we can choose $d_0$ small enough such that  $||D (\theta^e - \theta^e_0)|| > 2 ||b||$ and thus
	\begin{align}
	\Big| ||D (\theta^e - \theta^e_0)|| - ||b|| \Big| &= ||D (\theta^e - \theta^e_0)|| - ||b|| \ge \frac{1}{2} ||D (\theta^e - \theta^e_0)|| \\
	&\ge \frac{c_4}{2} \, || \theta^e - \theta^e_0 || = \frac{c_4}{2} \, || \theta - \theta_0 ||.
	\end{align}
\end{proof}

\begin{lemma} \label{lemma::normalityN3iiicondition}
	Let
	\begin{align}
	u(Y,X,\theta,d) = \sup_{\tau \in \bar U_d(\theta)} \big|\big|\psi(Y,X,\tau) - \psi(Y,X,\theta) \big|\big|.
	\end{align}
	and assume that Assumption \ref{ass::Model}, Assumption \ref{ass::GeneralAssumptionsConsistency} and the Moment Conditions \ref{MomCond::AsymptoticNormalityPsi} in Appendix \ref{sec::GeneralMomentConditions} hold.
	Then, there are strictly positive numbers $c$ and $d_0$, such that
	\begin{align}\label{eqn::normalityEu2condition}
	\mathbb{E}\big[ u(Y,X,\theta,d)^2 \big] &\le c \cdot d  \quad\;\, \mbox{for} \quad ||\theta - \theta_0|| + d \le d_0,
	\end{align}
	and for all $d \ge 0$.
\end{lemma}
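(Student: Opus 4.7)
The plan is to mirror the strategy used in the proof of Lemma \ref{lemma::normalityN3iicondition}, but with every estimate promoted from an $L^1$ bound to an $L^2$ bound. The key structural observation that makes this work is that when we decompose $\psi(Y,X,\tau) - \psi(Y,X,\theta)$ into contributions from the smooth functions $G_1^{(1)}, G_2, G_2^{(1)}$ and contributions from the indicator function $\mathds{1}_{\{Y \le X'\theta^q\}}$, the first type of contribution gives something of order $d$ after supping over $\tau \in \bar U_d(\theta)$, whereas the second type gives something bounded by an indicator $\mathds{1}_{\{X'\theta_-^q \le Y \le X'\theta_+^q\}}$ whose expectation is of order $d$. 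Squaring preserves the indicator (since $\mathds{1}_A^2 = \mathds{1}_A$) and turns the $O(d)$ bound into $O(d^2) \le O(d)$ for the smooth parts.

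Concretely, I would first apply the triangle inequality inside the supremum to split $u(Y,X,\theta,d)$ into a sum of roughly six or seven terms of two types:
\begin{enumerate}[label=(\roman*)]
\item \emph{Smooth-difference terms} of the form $\sup_{\tau \in \bar U_d(\theta)} \|X\|^r \cdot |H(X'\tau) - H(X'\theta)|$ for $H \in \{G_1^{(1)}, G_2, G_2^{(1)}, (\cdot) G_2^{(1)}, \ldots\}$. By the mean value theorem and continuity, each of these is bounded by $\|X\|^{r+1} \cdot \sup_{\tilde\tau \in \bar U_d(\theta)} |H^{(1)}(X'\tilde\tau)| \cdot d$. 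Squaring and taking expectation yields a bound $c \cdot d^2$, where finiteness of the constant uses the Moment Conditions \ref{MomCond::AsymptoticNormalityPsi} (the products of suprema of derivatives against appropriate powers of $\|X\|$ and moments of $Y$).
\item \emph{Indicator-difference terms} of the form $\sup_{\tau \in \bar U_d(\theta)} \|X\|^r \cdot|B(Y,X,\tau)| \cdot |\mathds{1}_{\{Y \le X'\theta^q\}} - \mathds{1}_{\{Y \le X'\tau^q\}}|$, where $B$ is either a bounded (in $\theta$) factor or involves $Y$. Using \eqref{eqn::supindicatorfunction}, the supremum of the indicator difference is dominated by $\mathds{1}_{\{X'\theta^q_- \le Y \le X'\theta^q_+\}}$.
\end{enumerate}

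For the squared contribution of type (ii), the crucial trick is that the indicator squared equals itself, so after squaring I get $\|X\|^{2r} \cdot (\sup_{\tau} B)^2 \cdot \mathds{1}_{\{X'\theta^q_- \le Y \le X'\theta^q_+\}}$. Conditioning on $X$ and using the mean value theorem on $F_{Y|X}$ exactly as in \eqref{eqn::IndicatorFunctionInequalityOd} and \eqref{eqn::IndicatorFunctionPsi2FourthTerm} gives a factor of order $d$ (with a bounded density factor since $f_{Y|X}$ is bounded in a neighborhood of $X'\theta_0^q$). For the terms involving $Y$, the extra factor is handled by moving to $|Y|$ or $Y^2$ inside the conditional expectation and bounding as in \eqref{eqn::IndicatorFunctionPsi2FourthTerm}, which requires the moments $\mathbb{E}[\|X\|^r (\sup G_2^{(1)})^2 \mathbb{E}[Y^2 | X]]$ listed in \ref{MomCond::AsymptoticNormalityPsi}. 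The cross terms arising from expanding $\|\cdot\|^2$ of a sum are controlled by the AM--GM inequality $2|ab| \le a^2 + b^2$, which reduces them to terms already handled.

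Combining, the type (i) contributions give a bound $O(d^2)$ and the type (ii) contributions give $O(d)$, so the total bound is $c \cdot d$ for $||\theta - \theta_0|| + d \le d_0$, as required. The main obstacle will be the careful bookkeeping: there are many cross terms after squaring $\psi_2 - \psi_2'$ since $\psi_2$ itself has four distinct pieces, and one must match each resulting product against one of the expectations in \ref{MomCond::AsymptoticNormalityPsi}. Measurability of the suprema is handled as in the previous proofs via separability of the process $\psi$.
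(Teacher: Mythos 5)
Your overall architecture is the same as the paper's: decompose $\psi(Y,X,\tau)-\psi(Y,X,\theta)$ into smooth-difference pieces and indicator-difference pieces, exploit $\mathds{1}_A^2=\mathds{1}_A$ so that the squared indicator contributions are still controlled by $\mathbb{E}\big[\mathds{1}_{\{X'\theta^q_-\le Y\le X'\theta^q_+\}}\big|X\big]=\mathcal{O}(d)$ via the mean value theorem applied to $F_{Y|X}$ (and, for the terms carrying $Y$ or $Y^2$, by integrating $|y|^j f_{Y|X}(y)$ over the short interval as in (\ref{eqn::IndicatorFunctionPsi2FourthTerm})), and handle measurability through separability. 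This is exactly what the paper does for $(\mathrm{i})^2$ through $(\mathrm{v})^2$ and the cross products.

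There is, however, one concrete point where your plan does not match the stated hypotheses. You propose to bound the squared smooth-difference terms by $\mathcal{O}(d^2)$ via a two-fold application of the mean value theorem, and to dispatch cross products with $2|ab|\le a^2+b^2$; both moves produce ``pure square'' constants such as $\mathbb{E}\big[\,\|X\|^{4}\big(\sup_{\tau}|G_1^{(2)}(X'\tau^q)|\big)^2\,\big]$ and $\mathbb{E}\big[\,\|X\|^{4}\big(\sup_{\tau}|G_2^{(2)}(X'\tau^e)|\big)^2\,\mathbb{E}[Y^2|X]\,\big]$, which are \emph{not} among the Moment Conditions \ref{MomCond::AsymptoticNormalityPsi} (nor is the moment $\mathbb{E}\big[\|X\|^{r}(\sup G_2^{(1)})^2\,\mathbb{E}[Y^2|X]\big]$ you cite). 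Since only a single power of $d$ is needed in (\ref{eqn::normalityEu2condition}), the paper instead applies the mean value theorem to exactly one factor of each product and bounds the partner factor crudely by its supremum over $\bar U_{d_0}(\theta_0)$; this is precisely why the listed conditions are mixed products such as $\mathbb{E}\big[\|X\|^{3}(\sup G_1^{(1)})(\sup G_1^{(2)})\big]$ and $\mathbb{E}\big[\|X\|^{3}(\sup G_2^{(1)})(\sup G_2^{(2)})\,\mathbb{E}[Y^2|X]\big]$. Your argument is correct under (slightly) strengthened moment assumptions, but to prove the lemma as stated you should replace the double mean value theorem and the AM--GM step by this ``one factor of $d$, one crude sup bound'' device, after which every term matches an entry of \ref{MomCond::AsymptoticNormalityPsi}.
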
	

\begin{proof}[Proof of Lemma \ref{lemma::normalityN3iiicondition}]
	Let in the following $d > 0$ and $\theta \in \Theta$ such that $||\theta - \theta_0|| + d \le d_0$.
	It holds that
	\begin{align}
	\left( \sup_{\tau \in \bar U_d(\theta)} \big|\big|\psi(Y,X,\tau) - \psi(Y,X,\theta) \big|\big| \right)^2 
	= \sup_{\tau \in \bar U_d(\theta)} \big|\big|\psi(Y,X,\tau) - \psi(Y,X,\theta) \big|\big|^2
	\end{align}
	and consequently, we show that 
	\begin{align}
	\mathbb{E} \left[ \sup_{\tau \in \bar U_d(\theta)} \big|\big|\psi_j(Y,X,\tau) - \psi_j(Y,X,\theta) \big|\big|^2 \right] = \mathcal{O}(d)
	\end{align}
	for both components $j = 1,2$ and for some $d >0$ small enough.

	For the first squared component, we get that
	\begin{align*}
	&\mathbb{E} \left[ \sup_{\tau \in \bar U_{d}(\theta)} \big|\big| \psi_1(Y,X,\tau) - \psi_1(Y,X,\theta) \big|\big|^2 \right]\\
	\le \;&\; \max \left(\left| \frac{1-\alpha}{\alpha} \right|^2, 1\right) \cdot  \mathbb{E} \left[ \sup_{\tau \in \bar U_{d}(\theta)}  \left| \left| X \big( \alpha G_1^{(1)}(X' \theta^q) + G_2(X' \theta^e) - \alpha  G_1^{(1)}(X' \tau^q) -  G_2(X' \tau^e) \big) \right| \right|^2   \right]  \\
	&\qquad  + \frac{2}{\alpha^2} \mathbb{E} \left[ \sup_{\tau \in \bar U_{d}(\theta)}  \left| \left| X \big( \alpha  G_1^{(1)}(X' \tau^q) + G_2(X' \tau^e) \big) \right| \right|^2  ||X|| \sup_{\tau \in \bar U_{d}(\theta)}  f_{Y|X}(X' \tau^q)\; \right] \cdot d\\
	&\qquad  + \frac{2}{\alpha^2} \max \big(1-\alpha, \alpha \big) \mathbb{E} \Biggl[ \sup_{\tau \in \bar U_{d}(\theta)} \big| \big| X \big( \alpha G_1^{(1)}(X' \theta^q) + G_2(X' \theta^e) - \alpha  G_1^{(1)}(X' \tau^q) -  G_2(X' \tau^e) \big) \big| \big|  \\
	&\qquad \qquad \qquad \cdot  \left| \left| X \big( \alpha  G_1^{(1)}(X' \tau^q) + G_2(X' \tau^e) \big) \right| \right| \Biggr], 			
	\end{align*}
	where we apply (\ref{eqn::IndicatorFunctionInequalityOd}) for the second summand.
	The remaining two summands can be  bounded linearly by the arguments given in (\ref{eqn::continuousfunctionOd}) since $G_1^{(1)}$ and $G_2$ are continuously differentiable functions and the respective moments are finite.
	\par 
	For the second component of $\psi$, we get that
	\begin{align}
	\begin{aligned} \label{eqn::psi2SquaredFiveComponents}
	&\big|\big| \psi_2(Y,X,\tau) - \psi_2(Y,X,\theta) \big|\big| \\
	&\quad \le \big|\big| X (X' \theta^e - X' \theta^q) G_2^{(1)}(X' \theta^e) - X (X' \tau^e -  X' \tau^q) G_2^{(1)}( X' \tau^e)\big|\big| \\
	&\qquad + \left| \left| \frac{X G_2^{(1)}(X' \theta^e) X' \theta^q }{\alpha} \left(\mathds{1}_{\{Y \le X' \theta^q\}} - \mathds{1}_{\{Y \le X' \tau^q\}} \right) \right| \right| \\
	&\qquad +\left| \left| \mathds{1}_{\{Y \le X' \tau^q\}} \left( \frac{X G_2^{(1)}(X' \theta^e) X' \theta^q }{\alpha} - \frac{X G_2^{(1)}(X' \tau^e) X' \tau^q }{\alpha}\right) \right| \right|\\
	&\qquad +\left| \left| \frac{X G_2^{(1)}(X' \theta^e)}{\alpha} Y \left(\mathds{1}_{\{Y \le X' \theta^q\}} - \mathds{1}_{\{Y \le X' \tau^q\}} \right)\right| \right| \\
	&\qquad + \left| \left| \frac{Y \mathds{1}_{\{Y \le X' \tau^q\}}}{\alpha} \big(X G_2^{(1)}(X' \theta^e) - X G_2^{(1)}(X' \tau^e)\big)\right| \right| \\
	&\quad = (\mathrm{i}) + (\mathrm{ii}) + (\mathrm{iii}) + (\mathrm{iv}) + (\mathrm{v}).
	\end{aligned}
	\end{align}
	Thus, in order to evaluate $\mathbb{E} \left[ \sup_{\tau \in \bar U_{d}(\theta)} \big|\big| \psi_2(Y,X,\tau) - \psi_2(Y,X,\theta) \big|\big|^2 \right]$, we have to consider all the cross products out of the five summands in (\ref{eqn::psi2SquaredFiveComponents}). 
	Since the techniques applied are very similar, we only show details for two of the cross products.
	\begin{align*}
	&\mathbb{E} \left[ \sup_{\tau \in \bar U_{d}(\theta)} \, (\mathrm{ii}) \cdot (\mathrm{v}) \right] \\
	= \; &\mathbb{E}\left[ \sup_{\tau \in \bar U_{d}(\theta)}\left| \left| \frac{X G_2^{(1)}(X' \theta^e) X' \theta^q }{\alpha} \left(\mathds{1}_{\{Y \le X' \theta^q\}} - \mathds{1}_{\{Y \le X' \tau^q\}} \right) \right| \right| \right. \\
	&\qquad \quad \cdot \left. \left| \left| \frac{Y \mathds{1}_{\{Y \le X' \tau^q\}}}{\alpha} \big(X G_2^{(1)}(X' \theta^e) - X G_2^{(1)}(X' \tau^e)\big)\right| \right| \right] \\		
	\le \; & \frac{1}{\alpha^2} \mathbb{E} \left[  \left| \left|X G_2^{(1)}(X' \theta^e) X' \theta^q   \right| \right| \cdot \mathbb{E} \big[|Y| \big| X \big] \cdot ||X|| \cdot \sup_{\tau \in \bar U_{d}(\theta)} \big| \big| G_2^{(1)}(X' \theta^e) - G_2^{(1)}(X' \tau^e) \big| \big| \right] \\			
	\le \; &\frac{1}{\alpha^2} \mathbb{E} \left[  \left| \left| X G_2^{(1)}(X' \theta^e) X' \theta^q  \right| \right| \cdot \mathbb{E} \big[|Y| \big| X \big] \cdot ||X|| \cdot \sup_{\tau \in \bar U_{d}(\theta)} \big| \big| X G_2^{(2)}(X' \tau^e) \big| \big| \right] \cdot d \\							
	= \; &\mathcal{O}(d),
	\end{align*}	
	by (\ref{eqn::continuousfunctionOd}) since $G_2^{(1)}$ is continuously differentiable.

	The following crossproducts can be bounded analogously by bounding the indicator functions and by applying the mean value theorem as in (\ref{eqn::continuousfunctionOd}):
	$(\mathrm{i})^2$,
	$(\mathrm{iii})^2$,
	$(\mathrm{v})^2$,
	$(\mathrm{i}) \cdot (\mathrm{iii})$,
	$(\mathrm{i}) \cdot (\mathrm{iv})$,
	$(\mathrm{i}) \cdot (\mathrm{v})$,
	$(\mathrm{ii}) \cdot (\mathrm{iv})$,
	$(\mathrm{ii}) \cdot (\mathrm{v})$,
	$(\mathrm{iii}) \cdot (\mathrm{iv})$,
	$(\mathrm{iii}) \cdot (\mathrm{v})$ and
	$(\mathrm{iv}) \cdot (\mathrm{v})$.
	\par 
	A second type of technique, similar to the arguments in (\ref{eqn::IndicatorFunctionPsi2FourthTerm}) arises in the cases
	$(\mathrm{ii})^2$,
	$(\mathrm{iv})^2$ and
	$(\mathrm{ii}) \cdot (\mathrm{iv})$. 
	We get that there exists $\theta^q_{-},\theta^q_{+} \in \bar U_d(\theta)$ and a value $\tilde \theta^q$ on the line between $\theta^q_{-}$ and $\theta^q_{+}$, such that
	\begin{align*}
		\mathbb{E} \left[ \sup_{\tau \in \bar U_{d}(\theta)} (\mathrm{iv})^2 \right]
		&\le \mathbb{E} \left[ \left| \left| \frac{X G_2^{(1)}(X' \theta^e)}{\alpha} \right| \right|^2 \mathbb{E} \left[ \left. \sup_{\tau \in \bar U_{d}(\theta)} \big| Y \left(\mathds{1}_{\{Y \le X' \theta^q\}} - \mathds{1}_{\{Y \le X' \tau^q\}} \right) \big|^2 \right| X \right] \right] \\
		&\le \mathbb{E} \left[\left| \left| \frac{X G_2^{(1)}(X' \theta^e)}{\alpha} \right| \right|^2 \mathbb{E} \left[ \left. Y^2 \, \mathds{1}_{\{ X' \theta^q_{-} \le Y \le X'\theta^q_{+} \}} \right| X \right] \right] \\
		&= \mathbb{E} \left[ \left| \left| \frac{X G_2^{(1)}(X' \theta^e)}{\alpha} \right| \right|^2 \int_{X' \theta^q_{-}}^{ X' \theta^q_{+}} y^2 f_{Y|X} (y) \mathrm{d}y \right] \\
		&\le \mathbb{E} \left[\left| \left| \frac{X G_2^{(1)}(X' \theta^e)}{\alpha} \right| \right|^2 (X' \tilde \theta^q)^2  f_{Y|X}(X' \tilde \theta^q) \big(X'\theta^q_{+} - X'\theta^q_{-} \big) \right] \\
		&\le \frac{2}{\alpha} \mathbb{E} \left[ \big|\big|X\big|\big|^3  \big(G_2^{(1)}(X' \theta^e)\big)^2 \cdot \sup_{\tau \in \bar U_d(\theta)} (X' \tau^q)^2  f_{Y|X}(X' \tau^q) \right] \cdot d \\ 
		&= \mathcal{O}(d),		
	\end{align*}					
	where we apply a multivariate version of the mean value theorem and notice that $f_{Y|X}$ is bounded.
\end{proof}

\begin{lemma} \label{lemma::RhoLipschitzContinuity}
	Assume that Assumption \ref{ass::Model}, Assumption \ref{ass::GeneralAssumptionsConsistency} and the Moment Conditions \ref{MomCond::AsymptoticNormalityRho} in Appendix \ref{sec::GeneralMomentConditions} hold.
	Then, the function $\rho(Y,X,\theta)$, given in (\ref{eqn::regressionrhofunction}) is locally Lipschitz continuous in $\theta$ in the sense that for all $\theta_1,\theta_2 \in U_{d}(\theta_0)$ in some neighborhood of $\theta_0$, it holds that
	\begin{align}
		\big| \rho(Y,X,\theta_1) - \rho(Y,X,\theta_2) \big| \le K(Y,X) \cdot \big|\big| \theta_1 - \theta_2 \big|\big|,
	\end{align}
	where $\mathbb{E} \big[K(Y,X)^2 \big] < \infty$.
\end{lemma}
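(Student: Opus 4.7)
The plan is to decompose $\rho(Y,X,\theta)$ into four groups of summands and derive a separate Lipschitz bound for each, then take $K(Y,X)$ as (a constant multiple of) the sum of the individual Lipschitz constants. The four groups are (i) the pure quantile loss $\rho_Q(Y,X,\theta^q) = \bigl(\mathds{1}_{\{Y \le X'\theta^q\}} - \alpha\bigr) G_1(X'\theta^q) - \mathds{1}_{\{Y \le X'\theta^q\}} G_1(Y)$, (ii) the smooth term $G_2(X'\theta^e)(X'\theta^e - X'\theta^q)$, (iii) the cross term $\alpha^{-1} G_2(X'\theta^e)(X'\theta^q - Y)\mathds{1}_{\{Y \le X'\theta^q\}}$, and (iv) the smooth term $-\mathcal{G}_2(X'\theta^e) + a(Y)$. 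Choose a radius $d>0$ small enough that $\bar U_d(\theta_0) \subset \operatorname{int}(\Theta)$ and work with $\theta_1,\theta_2 \in U_d(\theta_0)$ throughout.

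For (ii) and (iv) the mean value theorem applied along the straight line between $\theta_1$ and $\theta_2$ yields Lipschitz constants of the form $\|X\|\sup_{\tau \in \bar U_d(\theta_0)}|G_2^{(1)}(X'\tau^e)|\cdot(|X'\theta^e|+|X'\theta^q|)+\|X\|\sup|G_2(X'\tau^e)|$ and $\|X\|\sup|G_2(X'\tau^e)|$ respectively, which are standard. For (i) I would use the closed form
\[
\rho_Q(Y,X,\theta^q) = \mathds{1}_{\{Y\le X'\theta^q\}}\bigl(G_1(X'\theta^q)-G_1(Y)\bigr) - \alpha G_1(X'\theta^q),
\]
which shows $\rho_Q$ is continuous in $\theta^q$ (the discontinuity of the indicator is multiplied by the factor $G_1(X'\theta^q)-G_1(Y)$ that vanishes at $Y=X'\theta^q$). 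A case distinction on whether $X'\theta_1^q,X'\theta_2^q$ lie on the same side of $Y$ or not, combined with monotonicity of $G_1$ and the mean value theorem, gives the Lipschitz bound
\[
|\rho_Q(Y,X,\theta_1^q) - \rho_Q(Y,X,\theta_2^q)| \le \|X\| \sup_{\tau \in \bar U_d(\theta_0)}|G_1^{(1)}(X'\tau^q)| \cdot \|\theta_1^q-\theta_2^q\|.
\]

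The main obstacle is (iii), because the indicator function is not smooth in $\theta^q$. The key observation is the identity $(X'\theta^q - Y)\mathds{1}_{\{Y\le X'\theta^q\}} = (X'\theta^q-Y)^+$, so this factor is globally $1$-Lipschitz in $X'\theta^q$ with constant $\|X\|$. Writing the difference as
\[
G_2(X'\theta_1^e)(X'\theta_1^q-Y)^+ - G_2(X'\theta_2^e)(X'\theta_2^q-Y)^+
\]
and adding and subtracting $G_2(X'\theta_1^e)(X'\theta_2^q-Y)^+$ then gives a bound of the form $\sup |G_2(X'\tau^e)|\cdot\|X\|\cdot\|\theta_1^q-\theta_2^q\| + (|Y|+\|X\|\cdot C)\sup|G_2^{(1)}(X'\tau^e)|\cdot\|X\|\cdot\|\theta_1^e-\theta_2^e\|$, where $C$ bounds $\|\theta^q\|$ on $\bar U_d(\theta_0)$.

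Combining the four bounds, an admissible choice of Lipschitz constant is
\[
K(Y,X) = C_0\,\|X\|\Bigl(\sup_{\tau}|G_1^{(1)}(X'\tau^q)| + \sup_{\tau}|G_2(X'\tau^e)| + \sup_{\tau}|G_2^{(1)}(X'\tau^e)|(1+|Y|+\|X\|)\Bigr)
\]
with a numerical constant $C_0$ and suprema over $\tau\in\bar U_d(\theta_0)$. Square-integrability of $K$ follows from the Cauchy--Schwarz inequality together with the Moment Conditions \ref{MomCond::AsymptoticNormalityRho}: the conditions on $\mathbb{E}[\|X\|^2\sup(G_1^{(1)})^2]$, $\mathbb{E}[\|X\|^2\sup(G_2)^2]$, $\mathbb{E}[\|X\|^4\sup(G_2^{(1)})^2]$, and $\mathbb{E}[\|X\|^2\sup(G_2^{(1)})^2\,\mathbb{E}[Y^2|X]]$ dominate the relevant expectations (using $\mathbb{E}[\|X\|^2 \sup(G_2^{(1)})^2 Y^2]=\mathbb{E}[\|X\|^2 \sup(G_2^{(1)})^2 \mathbb{E}[Y^2|X]]$ by the tower property), so $\mathbb{E}[K(Y,X)^2]<\infty$ as required.
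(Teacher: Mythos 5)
Your proof is correct, but it organizes the argument differently from the paper. The paper splits $\rho$ into only two pieces: a smooth part $\rho_2$ (everything not multiplied by the indicator) and a single non-smooth part $\rho_1(Y,X,\theta) = \mathds{1}_{\{Y \le X'\theta^q\}}\bigl(G_1(X'\theta^q)-G_1(Y)+\tfrac{1}{\alpha}G_2(X'\theta^e)(X'\theta^q-Y)\bigr)$, and then handles $\rho_1$ by a three-case analysis on the positions of $X'\theta_1^q$, $X'\theta_2^q$ relative to $Y$, using monotonicity of $G_1$ and the bound $|X'\theta_2^q - Y| \le |X'\theta_2^q - X'\theta_1^q|$ in the straddling case. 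You instead separate the cross term $\tfrac{1}{\alpha}G_2(X'\theta^e)(X'\theta^q-Y)\mathds{1}_{\{Y\le X'\theta^q\}}$ from the pure quantile loss and dispose of it globally via the ramp-function identity $(X'\theta^q-Y)\mathds{1}_{\{Y\le X'\theta^q\}}=(X'\theta^q-Y)^+$ together with the $1$-Lipschitz property of $z\mapsto z^+$; the case distinction then survives only in the pure $G_1$ part, where it is unavoidable. Both routes land on essentially the same Lipschitz constant $K(Y,X)$, and your verification of $\mathbb{E}[K^2]<\infty$ correctly matches the terms $\mathbb{E}[\|X\|^2\sup(G_1^{(1)})^2]$, $\mathbb{E}[\|X\|^2\sup(G_2)^2]$, $\mathbb{E}[\|X\|^4\sup(G_2^{(1)})^2]$ and $\mathbb{E}[\|X\|^3\sup(G_2^{(1)})^2\,\mathbb{E}[|Y|\,|\,X]]$, $\mathbb{E}[\|X\|^2\sup(G_2^{(1)})^2\,\mathbb{E}[Y^2|X]]$ from the Moment Conditions \ref{MomCond::AsymptoticNormalityRho} (with Cauchy--Schwarz for the cross products). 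The only cosmetic gap is the term $\mathbb{E}[\|X\|^2\sup(G_2^{(1)})^2]$, which is not listed verbatim; it follows by splitting on $\{\|X\|\le 1\}$ (where $\sup_{\tau}|G_2^{(1)}(X'\tau^e)|$ is bounded by continuity on a compact set) and $\{\|X\|>1\}$ (where it is dominated by the listed $\|X\|^4$ moment), a point the paper's own proof glosses over at the same level. Your version arguably buys a cleaner treatment of the kink; the paper's buys a shorter decomposition.
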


\begin{proof}
	We start the proof by splitting the $\rho$ function into two parts,
	\begin{align} 
	\rho(Y,X,\theta) = \rho_1(Y,X,\theta) + \rho_2(Y,X,\theta),
	\end{align}	
	where
	\begin{align} 
	\rho_1(Y,X,\theta) &= \mathds{1}_{\{Y \le X' \theta^q\}} \left(G_1(X'\theta^q) - G_1(Y) + \frac{1}{\alpha} G_2(X'\theta^e) (X' \theta^q - Y) \right), \\
	\rho_2(Y,X,\theta) &= G_2(X' \theta^e) \big( X' \theta^e - X' \theta^q \big) - \mathcal{G}_2(X' \theta^e) - \alpha G_1(X'\theta^q) + a(Y).
	\end{align}	
	Local Lipschitz continuity of $\rho_2$ follows since it is a continuously differentiable function and thus locally Lipschitz.
	We consequently get that for some $d > 0$ and for all $\theta_1,\theta_2 \in U_{d}(\theta_0)$, it holds that
	\begin{align}
	\big| \rho_2(Y,X,\theta_1) - \rho_2(Y,X,\theta_2) \big| &\le \big|\big| \theta_1 - \theta_2 \big|\big|
	\cdot \sup_{\theta \in U_{d}(\theta_0)} \left|\left| \begin{pmatrix}
	-X G_2(X' \theta^e) - \alpha X G_1^{(1)}(X'\theta^q)  \\
	X G_2^{(1)}(X' \theta^e) \big( X' \theta^e - X' \theta^q \big)
	\end{pmatrix} \right|\right|,
	\end{align}
	with Lipschitz-constant
	\begin{align}
	K(Y,X) = \sup_{\theta \in U_{d}(\theta_0)} \left|\left| \begin{pmatrix}
	-X G_2(X' \theta^e) - \alpha X G_1^{(1)}(X'\theta^q)  \\
	X G_2^{(1)}(X' \theta^e) \big( X' \theta^e - X' \theta^q \big)
	\end{pmatrix} \right|\right|,
	\end{align} 
	which is square-integrable by the moment conditions \ref{MomCond::AsymptoticNormalityRho}.
	
	For the function $\rho_1$, we consider three cases.
	First, let $\theta_1, \theta_2 \in \Theta$ such that $X'\theta_1^q \le X'\theta_2^q < Y$. Then it holds that,
	\begin{align}
	\rho_1(Y,X,\theta_1) = \rho_1(Y,X,\theta_2) = 0,
	\end{align}
	since $\mathds{1}_{\{Y \le X' \theta^q_1\}} =  \mathds{1}_{\{Y \le X' \theta^q_2 \}} = 0$, which is obviously a Lipschitz continuous function.
	\par
	Second, let $\theta_1, \theta_2 \in \Theta$ such that $Y \le X'\theta_1^q \le X'\theta_2^q$. Then, for $\theta = \theta_1, \theta_2$,
	\begin{align}
	\rho_1(Y,X,\theta) = G_1(X'\theta^q) -  G_1(Y) + \frac{1}{\alpha} G_2(X'\theta^e) (X' \theta^q - Y),
	\end{align}
	which is a continuously differentiable function and thus
	\begin{align}
	\big| \rho_1(Y,X,\theta_1) - \rho_1(Y,X,\theta_2) \big| &\le \big|\big| \theta_1 - \theta_2 \big|\big|
	\cdot \sup_{\theta \in U_{d}(\theta_0)} \left|\left| \begin{pmatrix}
	X G_1^{(1)}(X'\theta^q) + \frac{1}{\alpha} X G_2(X'\theta^e) \\
	\frac{1}{\alpha} X G_2^{(1)}(X'\theta^e) (X' \theta^q - Y)
	\end{pmatrix} \right|\right|.
	\end{align}
	\par 
	Finally, let $\theta_1, \theta_2 \in \Theta$ such that $X'\theta_1^q < Y \le X'\theta_2^q$. Then, since $G_1$ is increasing, we get that
	\begin{align*}
	\big| \rho_1(Y,X,\theta_1) - \rho_1(Y,X,\theta_2) \big| 
	&= \left|G_1(X'\theta^q_2) -  G_1(Y) + \frac{1}{\alpha} G_2(X'\theta^e_2) (X' \theta^q_2 - Y)\right| \\
	&\le \big| G_1(X'\theta^q_2) - G_1(X'\theta^q_1) \big| + \left| \frac{1}{\alpha} G_2(X'\theta^e_2) (X' \theta^q_2 - X' \theta^q_1)\right| \\
	&\le \big|\big| \theta^q_1 - \theta^q_2 \big|\big| \cdot  \sup_{\theta \in U_{d}(\theta_0)}  \left( \big|\big| X G_1^{(1)}(X'\theta^q)\big|\big| + \frac{1}{\alpha} \big|\big| X  G_2(X'\theta^e) \big|\big| \right). 
	\end{align*}
	Thus, the function $\rho(Y,X,\theta)$ is locally Lipschitz continuous in $\theta$ with square-integrable Lipschitz constants, $\mathbb{E}\big[ K(Y,X)^2 \big] < \infty$ by the Moment Conditions \ref{MomCond::AsymptoticNormalityRho} in Appendix \ref{sec::GeneralMomentConditions}.
\end{proof}

\begin{proposition}
	\label{prop::SampleVarianceEqualsOurVariance}
	Let $Y$ be a real-valued random variable with distribution function $F$, finite first and second moments and a unique $\alpha$-quantile $q_\alpha = F^{-1}(\alpha)$.
	Then,
	\begin{align}
	\frac{1}{\alpha^2} \int_{-\infty}^{q_\alpha} \int_{-\infty}^{q_\alpha} F(x \wedge y) - F(x) F(y) \mathrm{d}x \mathrm{d}y = \frac{1}{\alpha} \operatorname{Var}(Y|Y\le q_\alpha) + \frac{1-\alpha}{\alpha} \big(q_\alpha - \xi_\alpha \big)^2,
	\end{align}
	where $\xi_\alpha = \mathbb{E} \big[ Y \big| Y \le q_\alpha \big]$ denotes the $\alpha$-ES of $Y$. 
\end{proposition}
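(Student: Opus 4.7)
The plan is to interpret both sides of the identity as two different expressions for the same quantity, namely $\frac{1}{\alpha^2}\operatorname{Var}(W)$, where $W = (q_\alpha - Y)\mathds{1}_{\{Y \le q_\alpha\}}$.

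First I would rewrite the inner integrand on the left-hand side as a covariance: $F(x \wedge y) - F(x) F(y) = \operatorname{Cov}\bigl(\mathds{1}_{\{Y \le x\}}, \mathds{1}_{\{Y \le y\}}\bigr)$. The key observation is then that for every realization of $Y$,
\begin{equation*}
\int_{-\infty}^{q_\alpha} \mathds{1}_{\{Y \le x\}} \, \mathrm{d}x = (q_\alpha - Y) \, \mathds{1}_{\{Y \le q_\alpha\}} = W.
\end{equation*}
Hence $W$ is the integral (over a deterministic set) of a stochastic process whose two-point covariance function is exactly the integrand on the left.

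Second, applying Fubini's theorem to both $\mathbb{E}[W^2] = \mathbb{E}\bigl[\int\!\!\int \mathds{1}_{\{Y\le x\}}\mathds{1}_{\{Y\le y\}} \mathrm{d}x\,\mathrm{d}y\bigr] = \int\!\!\int F(x\wedge y)\, \mathrm{d}x\,\mathrm{d}y$ and $(\mathbb{E}[W])^2 = \int\!\!\int F(x)F(y)\, \mathrm{d}x\,\mathrm{d}y$ yields
\begin{equation*}
\operatorname{Var}(W) = \int_{-\infty}^{q_\alpha}\!\!\int_{-\infty}^{q_\alpha} \bigl[ F(x\wedge y) - F(x) F(y) \bigr]\, \mathrm{d}x\, \mathrm{d}y.
\end{equation*}
The interchange is justified by the finite second moment assumption on $Y$, which guarantees that $W$ is square-integrable and makes all the iterated integrals absolutely convergent on the bounded-in-one-direction domain $(-\infty, q_\alpha]^2$.

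Third, I would compute $\operatorname{Var}(W)$ directly from its definition. Writing $A = \{Y \le q_\alpha\}$ with $P(A) = \alpha$ (by continuity of $F$ at the unique quantile $q_\alpha$, or more generally using that $q_\alpha$ is uniquely defined), I have $\mathbb{E}[W] = \alpha(q_\alpha - \xi_\alpha)$ since $\mathbb{E}[Y \mathds{1}_A] = \alpha \xi_\alpha$, and
\begin{equation*}
\mathbb{E}[W^2] = \mathbb{E}\bigl[(Y - q_\alpha)^2 \mathds{1}_A\bigr] = \alpha \bigl( \operatorname{Var}(Y \mid A) + (\xi_\alpha - q_\alpha)^2 \bigr),
\end{equation*}
using the conditional variance decomposition. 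Subtracting the square of the mean gives
\begin{equation*}
\operatorname{Var}(W) = \alpha \operatorname{Var}(Y \mid Y \le q_\alpha) + \alpha(1-\alpha) (q_\alpha - \xi_\alpha)^2,
\end{equation*}
and dividing by $\alpha^2$ recovers the right-hand side.

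There is no real obstacle beyond bookkeeping: the only technical point is justifying Fubini, which is immediate because $F(x\wedge y) \le F(x) \wedge F(y)$ is bounded and $\int_{-\infty}^{q_\alpha} F(x)\, \mathrm{d}x < \infty$ by the finite first moment assumption (since $\int_{-\infty}^{q_\alpha} F(x)\,\mathrm{d}x = \mathbb{E}[(q_\alpha - Y)^+]$). This identity is essentially the same computation already sketched in the remark following Theorem~\ref{thm::AsymptoticNormalityPsi} around equation~(\ref{eqn::InterpretationESRegCovariance}), reinterpreted through the covariance-of-indicators representation of the double integral.
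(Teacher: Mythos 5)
Your proof is correct, and it takes a genuinely different route from the paper's. The paper evaluates the two double integrals separately by purely analytic manipulations: it first derives tail-integral representations $\mathbb{E}[Y\,|\,Y\le q_\alpha]=-\frac{1}{\alpha}\int_{-\infty}^{q_\alpha}F(x)\,\mathrm{d}x+q_\alpha$ and $\mathbb{E}[Y^2\,|\,Y\le q_\alpha]=-\frac{2}{\alpha}\int_{-\infty}^{q_\alpha}xF(x)\,\mathrm{d}x+q_\alpha^2$, then computes $\iint F(x)F(y)\,\mathrm{d}x\,\mathrm{d}y=\alpha^2(q_\alpha-\xi_\alpha)^2$ and handles $\iint F(x\wedge y)\,\mathrm{d}x\,\mathrm{d}y$ by splitting the domain and swapping the order of integration. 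You instead recognize the integrand as $\operatorname{Cov}\bigl(\mathds{1}_{\{Y\le x\}},\mathds{1}_{\{Y\le y\}}\bigr)$, identify the whole double integral as $\operatorname{Var}(W)$ for $W=(q_\alpha-Y)\mathds{1}_{\{Y\le q_\alpha\}}$ via one application of Fubini--Tonelli (which is free here since the integrands are nonnegative), and then compute $\operatorname{Var}(W)$ by the conditional variance decomposition. Your argument is shorter, more conceptual, and makes transparent why the right-hand side coincides with $\frac{1}{\alpha^2}\operatorname{Var}\bigl((Y-q_\alpha)\mathds{1}_{\{Y\le q_\alpha\}}\bigr)$, i.e.\ with the quantity appearing in (\ref{eqn::InterpretationESRegCovariance}); the paper's computation is more mechanical but requires no probabilistic reinterpretation of the integrand. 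One shared caveat: both arguments use $P(Y\le q_\alpha)=\alpha$, which does not follow from uniqueness of the quantile alone but requires continuity of $F$ at $q_\alpha$ (as the paper itself notes when defining the ES as a conditional tail expectation); you flag this explicitly, whereas the paper uses it silently, so this is not a gap relative to the paper's own standard of rigor.
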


\begin{proof}
	We first notice that for a distribution $F$ with finite second moment und unique $\alpha$-quantile, it holds that
	\begin{align}
		\label{eqn::ESestimationProofEXformula}
		\mathbb{E} \big[ Y \big| Y \le q_\alpha \big] &= - \frac{1}{\alpha}  \int_{-\infty}^{q_\alpha}  F(x) \mathrm{d}x + q_\alpha  \qquad \text{ and } \\
		\label{eqn::ESestimationProofEX2formula}
		\mathbb{E} \big[ Y^2 \big| Y   \le q_\alpha \big] &= - \frac{2}{\alpha}  \int_{-\infty}^{q_\alpha} x F(x) \mathrm{d}x + q_\alpha^2,
	\end{align}
	which can be obtained by using the identity
	\begin{align}
		Y \mathds{1}_{\{ Y \le q_\alpha \}} = \mathds{1}_{\{ Y \le q_\alpha \}} \left( \int_0^\infty \mathds{1}_{\{ Y > t \}} \, \mathrm{d}t - \int_{-\infty}^0 \mathds{1}_{\{ Y \le t \}} \, \mathrm{d}t \right)
	\end{align}
	and by taking expectations on both sides.
	By applying (\ref{eqn::ESestimationProofEXformula}), we get that
	\begin{align} 
	\label{eqn::ESestimationProofIntegralSquaredFormula}
	\int_{-\infty}^{q_\alpha} \int_{-\infty}^{q_\alpha} F(x) F(y) \mathrm{d}x \mathrm{d}y 
	= \left( \int_{-\infty}^{q_\alpha} F(x) \mathrm{d}x \right)^2
	= \left( \alpha q_\alpha  - \alpha \mathbb{E}\big[ Y \big| Y \le q_\alpha \big] \right)^2
	= \alpha^2 \big( q_\alpha - \xi_\alpha \big)^2.
	\end{align}
	Furthermore, notice that
	\begin{align}
		\begin{aligned} \label{eqn::ESestimationProofMinIntegralFormulaHelpStatement1}
			\int_{-\infty}^{q_\alpha} \int_{-\infty}^{q_\alpha} F(x \wedge y) \mathrm{d}x \mathrm{d}y
			= \int_{-\infty}^{q_\alpha} \int_{-\infty}^{y} F(x) \mathrm{d}x   \mathrm{d}y  + \int_{-\infty}^{q_\alpha} \int_{y}^{q_\alpha} F(y) \mathrm{d}x  \mathrm{d}y,
		\end{aligned}
	\end{align}	
	and by rearranging the order of integration for the first term in (\ref{eqn::ESestimationProofMinIntegralFormulaHelpStatement1}), we get that
	\begin{align}
	\begin{aligned} \label{eqn::ESestimationProofMinIntegralFormulaHelpStatement2}
	\int_{-\infty}^{q_\alpha} \int_{-\infty}^{y} F(x) \, \mathrm{d}x \mathrm{d}y
	&= \iint \limits_{\{ (x,y): \, y \le q_\alpha,\, x\le y \}}  F(x) \, \mathrm{d}x \mathrm{d}y
	= \iint \limits_{\{ (x,y): \, x \le q_\alpha, \, y \ge x \}}  F(x) \, \mathrm{d}y \mathrm{d}x  \\
	&= \int_{-\infty}^{q_\alpha} \int_{x}^{q_\alpha} F(x) \, \mathrm{d}y \mathrm{d}x
	= \int_{-\infty}^{q_\alpha}  F(x) (q_\alpha - x) \, \mathrm{d}x.
	\end{aligned}
	\end{align}	
	Thus, by first using (\ref{eqn::ESestimationProofMinIntegralFormulaHelpStatement1}) and (\ref{eqn::ESestimationProofMinIntegralFormulaHelpStatement2}) and by plugging in (\ref{eqn::ESestimationProofEXformula}) and (\ref{eqn::ESestimationProofIntegralSquaredFormula}), we obtain
	\begin{align}
	\begin{aligned}
	\label{eqn::ESestimationProofMinIntegralFormula}
	\int_{-\infty}^{q_\alpha} \int_{-\infty}^{q_\alpha} F(x \wedge y) \mathrm{d}x \mathrm{d}y
	&= 2 \int_{-\infty}^{q_\alpha} \int_{y}^{q_\alpha} F(y) \, \mathrm{d}x  \mathrm{d}y \\
	&= 2 \int_{-\infty}^{q_\alpha}  F(y) (q_\alpha - y) \, \mathrm{d}y \\
	&= 2 q_\alpha \int_{-\infty}^{q_\alpha}  F(y) \, \mathrm{d}y - 2 \int_{-\infty}^{q_\alpha} y  F(y) \, \mathrm{d}y \\
	&= 2 q_\alpha \big( \alpha q_\alpha - \alpha \xi_\alpha \big) + \alpha \mathbb{E} \big[ Y^2 \big| Y \le q_\alpha \big]  - \alpha q_\alpha^2 \\
	&= \alpha \mathbb{E} \big[ Y^2 \big| Y \le q_\alpha \big]  + \alpha q_\alpha^2 - 2\alpha q_\alpha \xi_\alpha.
	\end{aligned}
	\end{align}		
	Eventually, using (\ref{eqn::ESestimationProofIntegralSquaredFormula}) and (\ref{eqn::ESestimationProofMinIntegralFormula}), straight-forward calculations yield that
	\begin{align}
		&\frac{1}{\alpha^2} \int_{-\infty}^{q_\alpha} \int_{-\infty}^{q_\alpha} F(x \wedge y) - F(x) F(y) \mathrm{d}x \mathrm{d}y 
		= \frac{1}{\alpha} \operatorname{Var}(Y|Y\le q_\alpha) + \frac{1-\alpha}{\alpha} \big(q_\alpha - \xi_\alpha \big)^2,
	\end{align}
	which concludes the proof.
\end{proof}

\section{Separability of almost surely continuous functions}

\begin{definition}[Separability of a Stochastic Process]
	A stochastic process $\psi(x,\theta): \Omega \times \Theta \to \mathcal{Y}$ is called separable in the sense of Doob, if there exists in $\Omega$ an everywhere dense countable set $I$, and in $\Omega$ a nullset $N$ such that for any arbitrary open set $G \subset \Theta$ and every closed set $F \subset \mathcal{Y}$, the two sets
	\begin{align}
	&\{ x | \psi(x,\theta) \in F, \; \forall \theta \in G \} \qquad \mbox{and} \label{eqn::SeparabilitySet1} \\
	&\{ x | \psi(x,\theta) \in F, \; \forall \theta \in G \cap I \} \label{eqn::SeparabilitySet2}
	\end{align}
	differ from each other at most by a subset of $N$.
\end{definition}

\begin{proposition}[\cite{GikhmanSkorokhod2004}] \label{prop::SeparabilityContinuousFunctions}
	Let $\Theta$ and $\mathcal{Y}$ be metric spaces, $\Theta$ be a separable space. The sets (\ref{eqn::SeparabilitySet1}) and (\ref{eqn::SeparabilitySet2}) coincide for all $x \in \Omega$ for which the stochastic process $\psi(x,\theta)$ is continuous in $\theta$.
\end{proposition}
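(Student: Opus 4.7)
The plan is to prove the two set-theoretic inclusions separately, with only the reverse direction invoking the continuity assumption on $\psi(x,\cdot)$. Write $A_1 = \{x \mid \psi(x,\theta) \in F \text{ for all } \theta \in G\}$ and $A_2 = \{x \mid \psi(x,\theta) \in F \text{ for all } \theta \in G \cap I\}$. The inclusion $A_1 \subseteq A_2$ is trivial from $G \cap I \subseteq G$ and requires neither continuity nor any topological hypothesis on $G$, $F$, or $\Theta$.

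For the reverse inclusion, fix an $x$ at which $\theta \mapsto \psi(x,\theta)$ is continuous and which belongs to $A_2$. Given an arbitrary $\theta \in G$, I would approximate it from inside $G \cap I$: openness of $G$ in the metric space $\Theta$ yields an $r > 0$ with $B(\theta,r) \subseteq G$, and density of the countable set $I \subseteq \Theta$ guarantees that $B(\theta, 1/n) \cap I \neq \emptyset$ for all $n$ with $1/n < r$. Selecting $\theta_n$ from this intersection produces a sequence $\theta_n \in G \cap I$ with $\theta_n \to \theta$ in $\Theta$.

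The argument concludes by combining continuity with closedness of $F$. Since $x \in A_2$, each $\psi(x,\theta_n)$ lies in $F$; continuity of $\psi(x,\cdot)$ gives $\psi(x,\theta_n) \to \psi(x,\theta)$ in $\mathcal{Y}$; and as $F$ is closed, the limit satisfies $\psi(x,\theta) \in F$. Because $\theta \in G$ was arbitrary, $x \in A_1$, establishing $A_2 \subseteq A_1$ on the set of points where $\psi(x,\cdot)$ is continuous.

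I do not anticipate a genuine obstacle: the proof is a standard ``sequence plus closedness'' continuity manipulation. The only point worth flagging is that the three hypotheses — countable density of $I$ in $\Theta$, openness of $G$, and closedness of $F$ — are each used exactly once and in a coordinated fashion, so any attempted weakening (for example dropping openness of $G$, which was what ensured a full ball around $\theta$ sits inside $G$ and thus that the approximating $\theta_n$ belongs to $G \cap I$, not merely to $I$) would immediately derail the approximation step.
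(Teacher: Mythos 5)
Your proof is correct and follows essentially the same route as the paper's: the trivial inclusion from $G \cap I \subseteq G$, and for the reverse, an approximating sequence in $G \cap I$ (you build it directly inside a ball contained in $G$, while the paper takes a sequence in $I$ converging to $\tilde\theta$ and notes it eventually enters $G$ — an immaterial difference), followed by continuity and closedness of $F$. No gaps.
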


\begin{proof}
	It is clear that $\{ x | \psi(x,\theta) \in F, \; \forall \theta \in G \} \subseteq \{ x | \psi(x,\theta) \in F, \; \forall \theta \in G \cap I \}$. We thus only show the reverse. \par 
	Let $G \subset \Theta$ be an arbitrary open set and $F \subset \mathcal{Y}$ an arbitrary closed set. Let furthermore $x \in \Omega$ such that $\psi(x,\theta) \in F$ for all $\theta \in G \cap I$. We have to show that $\psi(x,\tilde \theta) \in F$ for all $\tilde \theta \in G$ but $\tilde \theta \notin I$. \par
	Thus, let $\tilde \theta \in G \setminus I$. Since $I$ is a dense set in $\Theta$, there exists a sequence $(\theta_n)_{n \in \mathbb{N}} \in \Theta \cap I$, such that $\theta_n \to \tilde \theta$ and since $G$ is an open set in $\Theta$ and $\tilde \theta \in G$, we can conclude that for $m \in \mathbb{N}$ large enough, $\theta_n \in G$ for all $n \ge m$. Furthermore, by continuity at $\theta$, it holds that $\psi(x,\theta_n) \to \psi(x,\tilde \theta)$ and since $\theta_n \in G \cap I$ for all $n$ large enough, $\psi(x,\theta_n) \in F$ by assumption. Eventually, since $F$ is a closed set, $\psi(x, \tilde \theta) \in F$ which proves the proposition.
\end{proof}

\begin{corollary}[Separability of continuous functions]
	\label{cor::SeparabilityContinuousFunctions}
	Let $\Theta$ and $\mathcal{Y}$ be metric spaces, $\Theta$ be a separable space, and let the stochastic process $\psi(x,\theta)$ be almost surely continuous. Then, $\psi$ is separable.
\end{corollary}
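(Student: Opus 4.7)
The plan is to combine Proposition \ref{prop::SeparabilityContinuousFunctions} with the almost sure continuity hypothesis in essentially one step. Since $\Theta$ is separable, I first pick a countable dense subset $I \subset \Theta$, which is the candidate separating set required by Doob's definition. Then I let $N \subset \Omega$ denote the exceptional nullset on which the sample paths $\theta \mapsto \psi(x,\theta)$ fail to be continuous; by the almost sure continuity assumption, $\mathbb{P}(N) = 0$.

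Next, for any open set $G \subset \Theta$ and any closed set $F \subset \mathcal{Y}$, I would denote the two sets from the definition of separability by
\begin{align*}
A(G,F) &= \{ x \in \Omega \mid \psi(x,\theta) \in F \text{ for all } \theta \in G \}, \\
B(G,F) &= \{ x \in \Omega \mid \psi(x,\theta) \in F \text{ for all } \theta \in G \cap I \}.
\end{align*}
The inclusion $A(G,F) \subseteq B(G,F)$ is immediate because $G \cap I \subseteq G$. For the reverse inclusion restricted to $\Omega \setminus N$, I would invoke Proposition \ref{prop::SeparabilityContinuousFunctions}: for every $x \in \Omega \setminus N$ the map $\psi(x,\cdot)$ is continuous on $\Theta$, and the proposition asserts exactly that the two sets coincide at such points. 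Hence $A(G,F) \cap (\Omega \setminus N) = B(G,F) \cap (\Omega \setminus N)$, which means the symmetric difference $A(G,F) \triangle B(G,F)$ is contained in $N$.

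Since $G$ and $F$ were arbitrary and $N$ is the same $\mathbb{P}$-nullset for every choice, this verifies the definition of separability in the sense of Doob with separating set $I$ and exceptional nullset $N$. There is no real obstacle here, as the heavy lifting is done by the previous proposition; the corollary is essentially a bookkeeping step that promotes the pointwise statement ``the sets coincide wherever $\psi(x,\cdot)$ is continuous'' to the measure-theoretic statement ``the sets differ by at most a nullset'' using the a.s.\ continuity assumption.
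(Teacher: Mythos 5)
Your argument is correct and follows essentially the same route as the paper's own proof: fix a countable dense set $I$ (available by separability of $\Theta$), let $N$ be the nullset of sample-path discontinuity, and apply Proposition \ref{prop::SeparabilityContinuousFunctions} to conclude that the two sets in Doob's definition coincide on $\Omega \setminus N$ and hence differ by at most a subset of $N$. Your write-up just makes the bookkeeping (uniformity of $N$ over all $G$ and $F$) slightly more explicit than the paper does.
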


\begin{proof}
	Since $\psi(x,\theta)$ is continuous for all $x \in \Omega \setminus N$ for some $N \subset \Omega$ with $\mathbb{P}(N) = 0$. We get from Proposition \ref{prop::SeparabilityContinuousFunctions} that the sets (\ref{eqn::SeparabilitySet1}) and (\ref{eqn::SeparabilitySet2}) coincide for all $x \in \Omega \setminus N$, i.e. they differ only by a subset of~$N$.
\end{proof}

\end{appendices}

\addcontentsline{toc}{section}{References}
\setstretch{0.8}
\setlength{\bibsep}{5pt}
\bibliographystyle{apalike}	
\bibliography{../../Library/mybib}    

\begin{thebibliography}{}

\bibitem[Acerbi and Szekely, 2014]{Acerbi2014}
Acerbi, C. and Szekely, B. (2014).
\newblock {B}acktesting {E}xpected {S}hortfall.
\newblock {\em Risk}.

\bibitem[Andersen and Bollerslev, 1998]{Andersen1998}
Andersen, T. and Bollerslev, T. (1998).
\newblock {A}nswering the skeptics: {Y}es, standard volatility models do
  provide accurate forecasts.
\newblock {\em International Economic Review}, 39:885--905.

\bibitem[Andrews, 1994]{Andrews1994}
Andrews, D. (1994).
\newblock {E}mpirical {P}rocess {M}ethods in {E}conometrics.
\newblock In Engle, R. and McFadden, D., editors, {\em Handbook of
  Econometrics}, volume~4, chapter~37, pages 2247--2294. Elsevier.

\bibitem[Artzner et~al., 1999]{Artzner1999}
Artzner, P., Delbaen, F., Eber, J.-M., and Heath, D. (1999).
\newblock {C}oherent {M}easures of {R}isk.
\newblock {\em Mathematical Finance}, 9(3):203--228.

\bibitem[Barendse, 2017]{Barendse2017}
Barendse, S. (2017).
\newblock {Interquantile Expectation Regression}.
\newblock Available at \url{https://ssrn.com/abstract=2937665}.

\bibitem[{Basel Committee}, 2016]{Basel2016}
{Basel Committee} (2016).
\newblock {M}inimum capital requirements for {M}arket {R}isk.
\newblock Technical report, Basel Committee on Banking Supervision.
\newblock Available at \url{http://www.bis.org/bcbs/publ/d352.pdf}.

\bibitem[Bayer and Dimitriadis, 2017a]{ESRegPackge}
Bayer, S. and Dimitriadis, T. (2017a).
\newblock {\em esreg: {J}oint ({V}a{R}, {ES}) {R}egression}.
\newblock R package version 0.2.0, available at
  \url{https://github.com/BayerSe/esreg}.

\bibitem[Bayer and Dimitriadis, 2017b]{BayerDimi2017}
Bayer, S. and Dimitriadis, T. (2017b).
\newblock {R}egression-based {E}xpected {S}hortfall {B}acktesting.
\newblock Working Paper.

\bibitem[Bollerslev, 1986]{Bollerslev1986}
Bollerslev, T. (1986).
\newblock {G}eneralized autoregressive conditional heteroskedasticity.
\newblock {\em Journal of Econometrics}, 31(3):307--327.

\bibitem[Brazauskas et~al., 2008]{Brazauskas2008}
Brazauskas, V., Jones, B.~L., Puri, M.~L., and Zitikis, R. (2008).
\newblock {E}stimating conditional tail expectation with actuarial applications
  in view.
\newblock {\em Journal of Statistical Planning and Inference},
  138(11):3590--3604.

\bibitem[Chen, 2008]{Chen2008}
Chen, S.~X. (2008).
\newblock {N}onparametric {E}stimation of {E}xpected {S}hortfall.
\newblock {\em Journal of Financial Econometrics}, 6(1):87--107.

\bibitem[Chernozhukov and Umantsev, 2001]{Chernozhukov2001}
Chernozhukov, V. and Umantsev, L. (2001).
\newblock {C}onditional value-at-risk: {A}spects of modeling and estimation.
\newblock {\em Empirical Economics}, 26(1):271--292.

\bibitem[Corsi, 2009]{Corsi2009}
Corsi, F. (2009).
\newblock {A Simple Approximate Long-Memory Model of Realized Volatility}.
\newblock {\em Journal of Financial Econometrics}, 7(2):174--196.

\bibitem[Dimitriadis and Bayer, 2017]{DimiBayer2017Supplement}
Dimitriadis, T. and Bayer, S. (2017).
\newblock {Online Supplement for ``A Joint Quantile and Expected Shortfall
  Regression Framework''}.

\bibitem[Efron, 1979]{Efron1979}
Efron, B. (1979).
\newblock {B}ootstrap {M}ethods: {A}nother {L}ook at the {J}ackknife.
\newblock {\em The Annals of Statistics}, 7(1):1--26.

\bibitem[Efron, 1991]{Efron1991}
Efron, B. (1991).
\newblock {R}egression percentiles using asymmetric squared error loss.
\newblock {\em Statistica Sinica}, 1:93--125.

\bibitem[Ehm et~al., 2016]{Ehm2016}
Ehm, W., Gneiting, T., Jordan, A., and Krüger, F. (2016).
\newblock {O}f quantiles and expectiles: consistent scoring functions,
  {C}hoquet representations and forecast rankings.
\newblock {\em Journal of the Royal Statistical Society: Series B (Statistical
  Methodology)}, 78(3):505--562.

\bibitem[Engle and Manganelli, 2004]{Engle2004}
Engle, R. and Manganelli, S. (2004).
\newblock {CAV}ia{R}: {C}onditional {A}utoregressive {V}alue at {R}isk by
  {R}egression {Q}uantiles.
\newblock {\em {Journal of Business and Economic Statistics}}, 22(4):367--381.

\bibitem[Fissler, 2017]{FisslerThesis2017}
Fissler, T. (2017).
\newblock {\em {O}n {H}igher {O}rder {E}licitability and {S}ome {L}imit
  {T}heorems on the {P}oisson and {W}iener {S}pace}.
\newblock PhD thesis, Universit\"at Bern.

\bibitem[Fissler and Ziegel, 2016]{Fissler2016}
Fissler, T. and Ziegel, J.~F. (2016).
\newblock {H}igher order elicitability and {Osband's} principle.
\newblock {\em Annals of Statistics}, 44(4):1680--1707.

\bibitem[Fissler et~al., 2016]{Fissler2016b}
Fissler, T., Ziegel, J.~F., and Gneiting, T. (2016).
\newblock {E}xpected {S}hortfall is jointly elicitable with {V}alue at {R}isk -
  {I}mplications for backtesting.
\newblock {\em Risk Magazine}, Janaury 2016.

\bibitem[Gaglianone et~al., 2011]{Gaglianone2011}
Gaglianone, W.~P., Lima, L.~R., Linton, O., and Smith, D.~R. (2011).
\newblock {E}valuating {V}alue-at-{R}isk {M}odels via {Q}uantile {R}egression.
\newblock {\em Journal of Business \& Economic Statistics}, 29(1):150--160.

\bibitem[Gikhman and Skorokhod, 2004]{GikhmanSkorokhod2004}
Gikhman, I. and Skorokhod, A. (2004).
\newblock {\em {T}he {T}heory of {S}tochastic {P}rocesses {I}}, volume 210 of
  {\em Classics in Mathematics}.
\newblock Springer Berlin Heidelberg.

\bibitem[Gneiting, 2011]{Gneiting2011b}
Gneiting, T. (2011).
\newblock {M}aking and {E}valuating {P}oint {F}orecasts.
\newblock {\em Journal of the American Statistical Association},
  106(494):746--762.

\bibitem[Gourieroux and Monfort, 1995]{Gourieroux1995}
Gourieroux, C. and Monfort, A. (1995).
\newblock {\em {S}tatistics and {E}conometric {M}odels: {V}olume 1, {G}eneral
  {C}oncepts, {E}stimation, {P}rediction and {A}lgorithms}.
\newblock Cambridge University Press.

\bibitem[Halbleib and Pohlmeier, 2012]{Halbleib2012}
Halbleib, R. and Pohlmeier, W. (2012).
\newblock {Improving the value at risk forecasts: Theory and evidence from the
  financial crisis}.
\newblock {\em Journal of Economic Dynamics and Control}, 36(8):1212--1228.

\bibitem[Hall and Sheather, 1988]{Hall1988}
Hall, P. and Sheather, S.~J. (1988).
\newblock {O}n the {D}istribution of a {S}tudentized {Q}uantile.
\newblock {\em Journal of the Royal Statistical Society. Series B
  (Methodological)}, 50(3):381--391.

\bibitem[Hendricks and Koenker, 1992]{Hendricks1992}
Hendricks, W. and Koenker, R. (1992).
\newblock {H}ierarchical {S}pline {M}odels for {C}onditional {Q}uantiles and
  the {D}emand for {E}lectricity.
\newblock {\em Journal of the American Statistical Association},
  87(417):58--68.

\bibitem[Huber, 1967]{Huber1967}
Huber, P. (1967).
\newblock {T}he behavior of maximum likelihood estimates under nonstandard
  conditions.
\newblock In {\em Proceedings of the Fifth Berkeley Symposium on Mathematical
  Statistics and Probability}, pages 221--233. Berkeley: University of
  California Press.

\bibitem[Koenker, 1994]{Koenker1994}
Koenker, R. (1994).
\newblock {C}onfidence {I}ntervals for {R}egression {Q}uantiles.
\newblock In Mandl, P. and Hu{\v{s}}kov{\'a}, M., editors, {\em Asymptotic
  Statistics: Proceedings of the Fifth Prague Symposium, held from September
  4--9, 1993}, pages 349--359. Physica-Verlag Heidelberg.

\bibitem[Koenker, 2005]{Koenker2005book}
Koenker, R. (2005).
\newblock {\em {Q}uantile {R}egression}.
\newblock Econometric Society Monographs. Cambridge University Press.

\bibitem[Koenker and Machado, 1999]{Koenker1999}
Koenker, R. and Machado, J. A.~F. (1999).
\newblock {G}oodness of {F}it and {R}elated {I}nference {P}rocesses for
  {Q}uantile {R}egression.
\newblock {\em Journal of the American Statistical Association},
  94(448):1296--1310.

\bibitem[Koenker and Xiao, 2006]{KoenkerXiao2006}
Koenker, R. and Xiao, Z. (2006).
\newblock {Q}uantile {A}utoregression.
\newblock {\em Journal of the American Statistical Association},
  101(475):980--990.

\bibitem[Komunjer, 2013]{Komunjer2013}
Komunjer, I. (2013).
\newblock {Q}uantile {P}rediction.
\newblock In {\em Handbook of Economic Forecasting}, volume~2, chapter~17,
  pages 961--994. Elsevier.

\bibitem[Lambert et~al., 2008]{Lambert2008}
Lambert, N.~S., Pennock, D.~M., and Shoham, Y. (2008).
\newblock {E}liciting {P}roperties of {P}robability {D}istributions.
\newblock In {\em Proceedings of the 9th ACM Conference on Electronic
  Commerce}, pages 129--138. ACM.

\bibitem[Louren{\c{c}}o et~al., 2003]{Lourenco2003}
Louren{\c{c}}o, H.~R., Martin, O.~C., and St{\"u}tzle, T. (2003).
\newblock {I}terated {L}ocal {S}earch.
\newblock In Glover, F. and Kochenberger, G.~A., editors, {\em Handbook of
  Metaheuristics}, pages 320--353. Springer US, Boston, MA.

\bibitem[Nadarajah et~al., 2014]{Nadarajah2014}
Nadarajah, S., Zhang, B., and Chan, S. (2014).
\newblock {E}stimation methods for expected shortfall.
\newblock {\em Quantitative Finance}, 14(2):271--291.

\bibitem[Nelder and Mead, 1965]{Nelder1965}
Nelder, J.~A. and Mead, R. (1965).
\newblock {A} {S}implex {M}ethod for {F}unction {M}inimization.
\newblock {\em The Computer Journal}, 7(4):308--313.

\bibitem[Newey and McFadden, 1994]{NeweyMcFadden1994}
Newey, W. and McFadden, D. (1994).
\newblock {L}arge sample estimation and hypothesis testing.
\newblock In Engle, R. and McFadden, D., editors, {\em Handbook of
  Econometrics}, volume~4, chapter~36, pages 2111--2245. Elsevier.

\bibitem[Nolde and Ziegel, 2017]{Nolde2017}
Nolde, N. and Ziegel, J.~F. (2017).
\newblock {E}licitability and backtesting: {P}erspectives for banking
  regulation.
\newblock arXiv:1608.05498 [q-fin.RM].

\bibitem[Taylor, 2008a]{Taylor2008}
Taylor, J.~W. (2008a).
\newblock {E}stimating {V}alue at {R}isk and {E}xpected {S}hortfall {U}sing
  {E}xpectiles.
\newblock {\em Journal of Financial Econometrics}, 6(2):231--252.

\bibitem[Taylor, 2008b]{Taylor2008b}
Taylor, J.~W. (2008b).
\newblock {U}sing {E}xponentially {W}eighted {Q}uantile {R}egression to
  {E}stimate {V}alue at {R}isk and {E}xpected {S}hortfall.
\newblock {\em Journal of Financial Econometrics}, 6(3):382--406.

\bibitem[Taylor, 2017]{Taylor2017}
Taylor, J.~W. (2017).
\newblock {F}orecasting {V}alue at {R}isk and {E}xpected {S}hortfall {U}sing a
  {S}emiparametric {A}pproach {B}ased on the {A}symmetric {L}aplace
  {D}istribution.
\newblock {\em Forthcoming in Journal of Business and Economic Statistics}.

\bibitem[{van der Vaart}, 1998]{VanderVaart1998}
{van der Vaart}, A.~W. (1998).
\newblock {\em {A}symptotic statistics}.
\newblock Cambridge Series in Statistical and Probabilistic Mathematics.
  Cambridge University Press.

\bibitem[\v{Z}ike\v{s} and Barun\'{i}k, 2016]{Zikes2016}
\v{Z}ike\v{s}, F. and Barun\'{i}k, J. (2016).
\newblock {S}emi-parametric {C}onditional {Q}uantile {M}odels for {F}inancial
  {R}eturns and {R}ealized {V}olatility.
\newblock {\em Journal of Financial Econometrics}, 14(1):185--226.

\bibitem[Weber, 2006]{Weber2006}
Weber, S. (2006).
\newblock {Distribution Invariant Risk Measures, Information, and Dynamic
  Consistency}.
\newblock {\em Mathematical Finance}, 16(2):419--441.

\bibitem[Xiao et~al., 2015]{Xiao2015}
Xiao, Z., Guo, H., and Lam, M.~S. (2015).
\newblock {Q}uantile {R}egression and {V}alue at {R}isk.
\newblock In Lee, C.-F. and Lee, J.~C., editors, {\em Handbook of Financial
  Econometrics and Statistics}, pages 1143--1167. Springer.

\bibitem[Ziegel et~al., 2017]{Ziegel2017}
Ziegel, J.~F., Krüger, F., Jordan, A., and Fasciati, F. (2017).
\newblock {M}urphy {D}iagrams: {F}orecast {E}valuation of {E}xpected
  {S}hortfall.
\newblock arXiv:1705.04537 [q-fin.RM].

\bibitem[Zwingmann and Holzmann, 2016]{Zwingmann2016}
Zwingmann, T. and Holzmann, H. (2016).
\newblock {Asymptotics for the expected shortfall}.
\newblock arXiv:1611.07222 [math.ST].

\end{thebibliography}

\end{document}